\documentclass{amsart}
\usepackage{ amsmath, amsthm, amsfonts, hyperref, graphicx, color, ifpdf}








\newcommand{\ol}{\overline}

\newcommand{\be}{\begin{equation}}
\newcommand{\ee}{\end{equation}}

\newcommand{\lll}{\left}
\newcommand{\rrr}{\right}

\newcommand{\tgij}{\tilde{g}_{ij}}
\newcommand{\thij}{\tilde{h}_{ij}}
\newcommand{\tguij}{\tilde{g}^{ij}}

\newcommand{\tgukl}{\tilde{g}^{kl}}
\newcommand{\thh}{\tilde{h}}
\newcommand{\tg}{\tilde{g}}
\newcommand{\F}{(F-\sigma)}
\newcommand{\tna}{\tilde{\nabla}}
\newcommand{\na}{\nabla}
\newcommand{\ka}{\kappa}

\newtheorem{theorem}{Theorem}[section]
\newtheorem{lemma}[theorem]{Lemma}
\newtheorem{proposition}[theorem]{Proposition}
\newtheorem{corollary}[theorem]{Corollary}

\theoremstyle{definition}

\theoremstyle{remark}
\newtheorem{remark}[theorem]{Remark}

\numberwithin{equation}{section}



\begin{document}
\setlength{\baselineskip}{1.2\baselineskip}

\title[Modified general curvature flow in hyperbolic space II]
{curvature flow of complete hypersurfaces in hyperbolic space}

\author{Ling Xiao}


\subjclass[2010]{Primary 53C44; Secondary 35K20, 58J35.}


\begin{abstract}
In this paper we continue our study of finding the curvature flow of complete hypersurfaces in hyperbolic space with a prescribed asymptotic boundary at infinity. Our main results are proved by deriving a priori global gradient estimates and $C^2$ estimates.
\end{abstract}

\maketitle

\section{Introduction} \label{Int}
In this paper, we continue our study of the modified curvature flow problem in hyperbolic space $\mathbb{H}^{n+1}$. Consider a complete Weingarten hypersurface in $\mathbb{H}^{n+1}$ with a prescribed asymptotic boundary $\Gamma$ at infinity, whose principal curvatures satisfy $f(\kappa[\Sigma_0])\leq\sigma$ (e.g. we can choose a constant mean curvature graph as found in \cite{NS96}),
and is given by an embedding $\mathbf{X}(0): \Omega\rightarrow\mathbb{H}^{n+1}$, where $\Omega\subset\partial_{\infty}\mathbb{H}^{n+1}.$ We consider the evolution of such an embedding to produce a family of embeddings $\mathbf{X}:\Omega\times[0,T)\rightarrow\mathbb{H}^{n+1}$ satisfying the following equations
\begin{equation}{\label{Int0}}
\left\{
\begin{aligned}
&\dot{\mathbf{X}}=(f(\ka[\Sigma])-\sigma)\nu_H\,\,&\mbox{in $\Omega\times[0,T)$},\\
&\mathbf{X}=\Gamma\,\,&\mbox{on $\partial\Omega\times[0,T)$},\\
&\mathbf{X}(0)=\Sigma_0\,\,&\mbox{in $\Omega\times\{0\}$},
\end{aligned}
\right.
\end{equation}
where $\ka[\Sigma(t)]=(\ka_1,\cdots,\ka_n)$ denotes the hyperbolic principal curvatures of $\Sigma(t),$ $\sigma\in (0,1)$ is a constant, and $\nu_{H}$ denotes the outward unit normal of $\Sigma(t)$ with respect to the hyperbolic metric.

In this paper we shall use the half-space model
\[\mathbb{H}^{n+1}=\{(x,x_{n+1})\in\mathbb{R}^{n+1}:x_{n+1}>0\}\]
equipped with the hyperbolic metric
\be\label{Int4}
ds^2=\frac{\sum_{i=1}^{n+1}dx_i^2}{x_{n+1}^2}.
\ee
One identifies the hyperplane $\{x_{n+1}=0\}=\mathbb{R}^n\times\{0\}\subset\mathbb{R}^{n+1}$ as the infinity of
$\mathbb{H}^{n+1},$ denoted by $\partial_{\infty}\mathbb{H}^{n+1}.$ For convenience we say $\Sigma$ has compact asymptotic boundary if $\partial\Sigma\subset\partial_{\infty}\mathbb{H}^{n+1}$ is compact with respect to the Euclidean metric in $\mathbb{R}^n.$

We assume the function $f$ satisfies the following fundamental structure conditions:
\be\label{Int5}
f_i(\lambda)\equiv\frac{\partial f(\lambda)}{\partial\lambda_i}>0\;\;\mbox{in $K$},\;\;1\leq i\leq n,
\ee
\be\label{Int6}
\mbox{$f$ is a concave function in $K$},
\ee
and
\be\label{Int7}
f>0\;\;\mbox{in $K$},\;\;f=0\;\;\mbox{on $\partial K$},
\ee
 where $K\subset\mathbb{R}^n$ is an open symmetric convex cone such that
 \be\label{Int8}
 K^+_n:=\{\lambda\in\mathbb{R}^n:\mbox{each component $\lambda_i>0$}\}\subset K.
 \ee
 In addition, we shall assume that $f$ is normalized
 \be\label{Int9}
 f(1,\cdots,1)=1
 \ee
 and satisfies the more technical assumptions
 \be\label{Int10}
 \mbox{$f$ is homogeneous of degree one}
 \ee
 and
 \be\label{Int11}
 \lim_{R\rightarrow +\infty}f(\lambda_1,\cdots,\lambda_{n-1},\lambda_{n}+R)\geq 1+\epsilon_0\;\;\;\mbox{uniformly in $B_{\delta_0}(\mathbf{1})$}
 \ee
 for some fixed $\epsilon_0>0$ and $\delta_0>0,$ where $B_{\delta_0}(\mathbf{1})$ is the ball centered at $\mathbf{1}=(1,\cdots,1)\in\mathbb{R}^n.$

 As we can see in \cite{GS08}, an example of a function satisfying all of these assumptions above is given by
 $f=(H_k/H_l)^{\frac{1}{k-l}},\;\;\mbox{$0\leq l<k$},$ defined in $K,$ where $H_l$ is the normalized $l-th$ elementary symmetric polynomial.(e.g, $H_0=1,$ $H_1=H,$ $H_n=K$ the extrinsic Gauss curvature.)

 Since $f$ is symmetric, from (\ref{Int6}), (\ref{Int9}) and (\ref{Int10}) we have
 \be\label{Int12}
 f(\lambda)\leq f(\mathbf{1})+\sum f_i(\mathbf{1})(\lambda_i-1)=\sum f_i(\mathbf{1})\lambda_i=\frac{1}{n}\sum\lambda_i\;\;\mbox{in $K$}
 \ee
and
\be\label{Int13}
\sum f_i(\lambda)=f(\lambda)+\sum f_i(\lambda)(1-\lambda_i)\geq f(\mathbf{1})=1\;\;\mbox{in $K$.}
\ee

In this paper, we always assume the initial surface $\Sigma_0$ to be connected and orientable, $\Sigma(t)=\{(x,u(x,t))|x\in\Omega,\;x_{n+1}=u,\;0\leq t<T\}$ to be the flowing surfaces, and the position vector $\mathbf{X}=(x,u(x,t))$ satisfies the flow equation (\ref{Int0}). If $\Sigma$ is a complete hypersurface in $\mathbb{H}^{n+1}$ with compact asymptotic boundary at infinity, then the normal vector field of $\Sigma$ is always chosen to be the one pointing to the unique unbounded region in $\mathbb{R}^{n+1}_+/\Sigma.$ In this case, both the Euclidean and hyperbolic principal curvature of $\Sigma$ are calculated with respect to this normal field.

 We shall take $\Gamma=\partial\Omega,$ where $\Omega\subset\mathbb{R}^n$ is a smooth domain and let $\Gamma_\epsilon$ denote its vertical lift. We seek a family of hypersurfaces $\Sigma(t)$ as the graph of a function $u(x,t)$ with boundary $\Gamma$ satisfying equation (\ref{Int0}). Then the coordinate vector fields and upper unit normal are given by
 \[\mathbf{X}_i=e_i+u_ie_{n+1},\;\;\nu_H=u\nu=u\frac{-u_ie_i+e_{n+1}}{w},\]
 where through out this paper $w=\sqrt{1+|\na u|^2}$ and $e_{n+1}$ is the unit vector in the positive $x_{n+1}$ direction in $\mathbb{R}^{n+1}.$

Notice that
 \[\left<\dot{\mathbf{X}}, \nu_H\right>_H=f-\sigma,\]
which is equivalent to
\[\left<\frac{\partial}{\partial t}(x, u(x, t)), \nu_H\right>_H=f-\sigma.\]
Thus the height function $u$ satisfies equation
\be\label{Int1} u_t=(F-\sigma)uw.\ee

Therefore problem (\ref{Int0})  can be represented as the Dirichlet problem for a fully nonlinear second order parabolic equation
\be\label{Int2}
\left\{
\begin{aligned}&u_t=uw(f-\sigma) &\mbox{in $\Omega\times [0, T)$\,,}\\
&u(x, t)=0 &\mbox{on $\partial\Omega\times [0, T)$\,,}\\
&u(x, 0)=u_0 &\mbox{in $\Omega\times \{0\}$\,.}
\end{aligned}
\right.
\ee

In this paper, we shall focus on proving the long time existence of the modified general curvature flow (MGCF) of a complete embeded hypersurface
with initial surface $\Sigma_0=\{(x, u_0(x)),\;x\in\Omega\}$ satisfying $f(\kappa[\Sigma_0])\leq\sigma$ and $1/w(u_0)>\sigma.$ These additional assumptions
will be needed in the proof of Proposition \ref{Grep0}. (Note that for constant mean curvature graph the latter assumption is trivial.)
Following the literature we define the class of $admissible$ $functions:$
$$\mathcal{A}(\Omega_T)=\{u\in C^{2,1}(\Omega_T):\kappa[u]\in K\}.$$
 Condition (\ref{Int5}) implies that equation (\ref{Int2}) is parabolic for admissible solutions. Our goal is to show that the Dirichlet problem (\ref{Int2}) admits smooth admissible solutions for all $0<\sigma<1.$ Due to the special nature of the problem we saw in \cite{GS08}, there are substantial technical difficulties to overcome and we have not yet succeeded in finding the solutions for all $\sigma\in (0,1).$ However, we succeed in improving the result in \cite{GS08}.

\begin{theorem}\label{Intt0}
Let $\Gamma=\partial\Omega\times\{0\}\subset\mathbb{R}^{n+1}$ where $\Omega$ is a bounded smooth domain in $\mathbb{R}^n.$ Suppose that the Euclidean mean curvature $\mathcal{H}_{\partial\Omega}\geq 0$ and $\sigma\in(0,1)$ satisfies $\sigma>\sigma_0,$ where $\sigma_0$ is the unique zero in $(0,1)$ of
\be\label{Int14}
\phi(a):=\frac{4}{3}a-\frac{1}{27}a^3-\frac{1}{27}(a^2+3)^{3/2}.
\ee
(Numerical calculations show $0.14596<\sigma_0<0.14597.$)

Under conditions (\ref{Int5})--(\ref{Int11}), there exists a solution $\Sigma(t)$, $t\in[0,\infty)$, to the MGCF (\ref{Int0}) with uniformly bounded principal curvatures
\be\label{Int15}
|\kappa[\Sigma(t)]|\leq C\;\; \mbox{on $\Sigma(t)$, $\forall t\in[0,\infty)$.}
\ee
Moreover, $\Sigma(t)=\{(x,u(x,t))\mid (x,t)\in\Omega\times[0,\infty)\}$ is the flowing surfaces of the unique admissible solution $u(x,t)\in C^\infty(\Omega\times(0,\infty))\cap W^{2,1}_p(\Omega\times[0,\infty))$ of the Dirichlet problem (\ref{Int2}), where $p>4.$ Furthermore, for any fixed $t\in[0,\infty),$ $u^2(x,t)\in C^{\infty}(\Omega)\cap C^{1+1}(\ol\Omega)$ and
\be\label{Int16}
\sqrt{1+|Du|^2}\leq C\;\;\mbox{in $\Omega$},
\ee
\be\label{Int17}
u|D^2u|\leq C\;\;\mbox{in $\Omega$}.
\ee
In addition, as $t\rightarrow\infty,$ $u(x,t)$ converges uniformly to a function $\tilde{u}(x)\in C^{\infty}(\Omega)\cap C^1(\ol\Omega)$ such that
$\Sigma_{\infty}=\{(x,\tilde{u}(x))\mid x\in\Omega\}$ is a unique complete surface satisfies $f(\kappa[\Sigma_\infty])=\sigma$ in $\mathbb{H}^{n+1}.$
\end{theorem}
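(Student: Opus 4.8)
The plan is to run the standard continuity-plus-parabolic-regularity scheme adapted to this geometric flow: short-time existence, a priori estimates that are uniform in $t$, bootstrap to $C^\infty$, extension to $[0,\infty)$, and analysis of the limit as $t\to\infty$. For short-time existence, condition \eqref{Int5} makes \eqref{Int2} parabolic on the admissible class $\mathcal A(\Omega_T)$, and the initial graph $\Sigma_0$ with $\kappa[u_0]\in K$ and $f(\kappa[\Sigma_0])\le\sigma$ is a smooth admissible datum, so the general theory of fully nonlinear parabolic Dirichlet problems (Krylov; Ladyzhenskaya--Solonnikov--Ural'tseva) yields a unique solution $u\in C^{2+\alpha,1+\alpha/2}$ on a maximal interval $[0,T^*)$, smooth for $t>0$; because $u$ vanishes on $\partial\Omega$ it is the squared height $v=u^2$ that one expects to be regular up to the boundary, and one works with the equation satisfied by $v$ there.

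The heart of the proof is the collection of a priori estimates, all to be made independent of $T^*$. \emph{(i) $C^0$:} by the comparison principle, using graphs of constant-$f$ hypersurfaces realizing $f=\sigma$ (spherical caps) as barriers, $u$ is bounded above and below uniformly in $t$ and $\Sigma(t)$ stays in a fixed compact region of $\mathbb H^{n+1}$; moreover the sign of $f-\sigma$ together with the hypothesis $1/w(u_0)>\sigma$ should, via the maximum principle applied to $u_t$ (equivalently to $F-\sigma$ along the flow), give monotonicity of $u$ in $t$, which I will reuse for convergence. \emph{(ii) Boundary gradient estimate:} local upper and lower barriers near $\partial\Omega$, which is where $\mathcal H_{\partial\Omega}\ge 0$ enters. \emph{(iii) Global gradient estimate \eqref{Int16}:} apply the maximum principle along the flow to an auxiliary function of support type (e.g. $\log w$ corrected by the height), invoking the structure conditions \eqref{Int5}, \eqref{Int6} and Proposition~\ref{Grep0}, which is exactly where the assumption $1/w(u_0)>\sigma$ is used. \emph{(iv) $C^2$ estimate \eqref{Int17}, i.e. a bound on $|\kappa[\Sigma(t)]|$:} differentiate the flow equation twice and apply the maximum principle to the largest principal curvature, with an auxiliary term controlling the gradient and the boundary contribution, using concavity \eqref{Int6} and the asymptotic condition \eqref{Int11}; the function $\phi$ and its root $\sigma_0$ in \eqref{Int14} emerge precisely from the algebraic inequality that closes this argument, which forces the restriction $\sigma>\sigma_0$.

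With uniform $C^0$, $C^1$ bounds and $|\kappa|\le C$, equation \eqref{Int2} becomes uniformly parabolic with bounded, concave structure, so the parabolic Evans--Krylov theorem gives $C^{2+\alpha,1+\alpha/2}$ interior estimates, and parabolic Schauder theory bootstraps to $C^\infty$ on $\Omega\times(0,\infty)$, with $W^{2,1}_p$ regularity ($p>4$) up to $t=0$; since none of the estimates depend on $T^*$, the flow cannot leave the admissible class or degenerate, so $T^*=\infty$. Finally, monotonicity of $u$ in $t$ and the uniform bounds give convergence of $u(\cdot,t)$ to some $\tilde u$, upgraded to $C^\infty_{loc}(\Omega)\cap C^1(\ol\Omega)$ by the interior estimates; since $u_t\to 0$ and $uw>0$, passing to the limit in \eqref{Int2} yields $f(\kappa[\Sigma_\infty])=\sigma$, and uniqueness of both the flow and the limiting hypersurface $\Sigma_\infty$ follows from the comparison principle for \eqref{Int2}.

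I expect the $C^2$/curvature estimate to be the main obstacle: it is the step that produces the quantitative threshold $\sigma_0$ defined by \eqref{Int14}, and the delicate point is balancing the concavity inequality coming from \eqref{Int6}, the gain from the asymptotic assumption \eqref{Int11}, and the terms generated by the hyperbolic geometry and the boundary. The global gradient estimate near the asymptotic boundary, where the half-space metric \eqref{Int4} degenerates, is the second delicate point, and is the reason the extra hypotheses $\mathcal H_{\partial\Omega}\ge 0$ and $1/w(u_0)>\sigma$ are imposed.
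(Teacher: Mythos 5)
Your overall scheme (short-time existence, a priori $C^0$, $C^1$, $C^2$ estimates uniform in $t$, Evans--Krylov bootstrap, long-time existence, then convergence via monotonicity of $u$) matches the paper's architecture, and you correctly identify the curvature estimate as the source of the numerical threshold $\sigma_0$ in \eqref{Int14}. Two points, however, deviate from the paper in ways that matter.

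First, the paper does not prove the theorem directly for the degenerate problem \eqref{Int2}; it replaces the boundary condition $u=0$ by $u=\epsilon$ (problem \eqref{Int3}), derives all estimates with explicit control of the $\epsilon$-dependence (cf.\ Theorem~\ref{Intt2}: some bounds are $\epsilon$-uniform, others are not), and then sends $\epsilon\to 0$. Your remark about working with $v=u^2$ near the boundary gestures at the degeneracy, but the actual device is this $\epsilon$-regularization, without which neither the boundary $C^2$ estimate (Theorem~\ref{C2bt0}) nor the interior short-time theory makes sense on $\Omega\times[0,T)$. You should make the approximation-and-limit structure explicit.

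Second, your proposed global gradient estimate --- an interior maximum-principle argument applied to a test function of the type $\log w$ corrected by the height --- is not what the paper does, and it is not clear it yields what is needed. The paper's Proposition~\ref{Grep0} proves the sharp bound $w\le 1/\sigma$ by exploiting the identity $\mathcal L u_k=0$: once $G_u\ge 0$ (initialized by the hypothesis $1/w(u_0)>\sigma$ and propagated by Lemma~\ref{Grel1}), the linearized operator satisfies the maximum principle, $u_k$ and hence $w$ attain their maxima on $\partial\Omega\times(0,t^*)$, and the boundary relation $\sum_{\alpha<n}u_{\alpha\alpha}=-(n-1)u_n\mathcal H_{\partial\Omega}\le 0$ combined with \eqref{Int12} forces $1/w\ge\sigma$ there. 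This sharp bound is then fed back into $G_u\ge 0$ to continue the argument in $t$. An interior test-function estimate does not obviously produce the sharp constant $1/\sigma$, and it would not explain where $\mathcal H_{\partial\Omega}\ge 0$ is used: contrary to what you write, the mean-convexity hypothesis enters in this \emph{global} gradient step at the boundary maximum of $w$, not in the boundary gradient barrier (Lemma~\ref{Grel0}), which only needs exterior spheres. Without the sharp bound $w\le 1/\sigma$ the hypothesis $\nu^{n+1}\ge\sigma$ of Theorem~\ref{c2gt0} (the interior curvature estimate with threshold $\sigma_0$) would be unavailable, so this is not a cosmetic difference.
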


 Equation (\ref{Int2}) is degenerate when $u=0.$ It is therefore very natural to approximate the boundary condition
$u=0\;\mbox{on $\partial{\Omega}\times[0,T)$}$ by $u=\epsilon\;\;\mbox{on $\partial{\Omega}\times[0,T)$}$, for $\epsilon>0$ sufficiently small. So the problem becomes
\be\label{Int3}
\left\{
\begin{aligned}
&u_t=uw(f-\sigma)&\mbox{in $\Omega\times [0, T),$}\\
&u(x, t)=\epsilon&\mbox{on\, $\partial\Omega\times [0, T),$}\\
&u(x, 0)=u_0^\epsilon=u_0+\epsilon&\mbox{in $\Omega\times \{0\},$}
\end{aligned}
\right.
\ee
where $\Sigma_0^\epsilon=\{(x,u_0^\epsilon)|x\in\Omega\}$ satisfies $f(\ka[\Sigma_0^\epsilon])\leq\sigma$ and $\frac{1}{w(u_0^\epsilon)}>\sigma,$
$\forall x\in\Omega.$

\begin{theorem}\label{Intt2}
Let $\Omega$ be a bounded smooth domain in $\mathbb{R}^n$ with $\mathcal{H}_{\partial\Omega}\geq 0$ and suppose $f$ satisfies (\ref{Int5})--(\ref{Int11}). Then for any $\sigma\in(0,1)$ and $\epsilon>0$ sufficiently small, there exists a unique admissible solution $u^\epsilon\in C^\infty(\ol\Omega\times (0,\infty))$ of the Dirichlet Problem (\ref{Int3}). Moreover, $u^\epsilon$ satisfies the a priori estimates
\be\label{Int18}
\sqrt{1+|Du^\epsilon|^2}\leq C\;\;\mbox{in $\Omega\times[0,\infty)$},
\ee
\be\label{Int19}
u^\epsilon|D^2u^\epsilon|\leq C\;\;\mbox{on $\partial\Omega\times[0,\infty)$},
\ee
and
\be\label{Int20}
u^\epsilon|D^2u^\epsilon|\leq C(\epsilon, t)\;\;\mbox{in $\Omega\times[0,\infty)$}.
\ee
In particular, $C(\epsilon,t)$ depends exponentially on time $t.$
\end{theorem}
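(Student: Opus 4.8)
The plan is to establish Theorem \ref{Intt2} by the method of continuity combined with parabolic a priori estimates, treating the non-degenerate approximate problem (\ref{Int3}) where the degeneracy at $u=0$ has been removed by the boundary condition $u=\epsilon$. Since (\ref{Int3}) is a uniformly parabolic fully nonlinear equation for admissible solutions (with $u$ bounded below by a positive constant by the maximum principle, and bounded above by a barrier such as a suitable spherical cap or the initial data's envelope), short-time existence of a smooth admissible solution follows from standard parabolic theory; the issue is extending it to $[0,\infty)$, which requires uniform-in-time $C^{2,1}$ estimates, after which parabolic Schauder and Krylov--Safonov theory upgrade to $C^\infty$.

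First I would derive the $C^0$ bound: the maximum principle applied to (\ref{Int3}) gives $\epsilon \le u^\epsilon \le \sup_\Omega u_0^\epsilon$ (using $f(\kappa[\Sigma_0^\epsilon])\le\sigma$ to control the sign of $u_t$ at extrema, together with a geometric barrier from above built out of hyperbolic spheres whose asymptotic boundary encloses $\partial\Omega$). The key structural input $f(\kappa[\Sigma_0^\epsilon])\le\sigma$ and $1/w(u_0^\epsilon)>\sigma$ should propagate in time and yield monotonicity of $u^\epsilon$ along the flow, which is what ultimately forces convergence. Next comes the global gradient estimate (\ref{Int18}): this is the heart of the matter and is where the curvature-quotient structure (\ref{Int5})--(\ref{Int13}) enters decisively — one tests the evolution equation satisfied by an auxiliary quantity like $\log w + $ (a function of $u$) or $1/(uw)$, uses concavity (\ref{Int6}) and homogeneity (\ref{Int10}) to handle the second-order terms, and the normalization (\ref{Int12})--(\ref{Int13}) to absorb lower-order terms; the boundary gradient estimate on $\partial\Omega\times[0,\infty)$ uses the hypothesis $\mathcal{H}_{\partial\Omega}\ge 0$ to construct local upper and lower barriers near the boundary (vertical translates of the cylinder over $\partial\Omega$, corrected by the distance function). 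The assumption (\ref{Int11}) is the one that, in Theorem \ref{Intt0}, will let the constant in (\ref{Int16}) be uniform as $\epsilon\to 0$, but here it is enough to know the gradient is controlled for fixed small $\epsilon$.

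Then I would turn to the second-derivative estimates. The boundary estimate (\ref{Int19}) is again via barriers: one bounds the tangential-tangential, tangential-normal, and normal-normal second derivatives on $\partial\Omega$ separately, the last and hardest using a test function involving the linearized operator applied to $u - $ barrier plus the condition $\mathcal{H}_{\partial\Omega}\ge 0$ and the already-established gradient bound; the factor $u$ in $u|D^2u|$ reflects that the natural geometric quantity is the hyperbolic second fundamental form $h_{ij}\sim u(D^2u)/w + \dots$. For the interior estimate (\ref{Int20}) one applies the maximum principle to the largest principal curvature $\kappa_{\max}$ (or $\log\lambda_{\max}(h)$ plus auxiliary terms), using concavity of $f$ to control the bad third-order terms and the gradient bound for the rest; but near the degenerate set $\{u\to 0\}$ the ellipticity constants degenerate, so the bound one gets depends on $\operatorname{dist}$ to $\partial\Omega$ — equivalently on $\epsilon$ — and on $t$ through the a priori growth of the solution, giving the stated $C(\epsilon,t)$ with exponential time dependence. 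The main obstacle, as flagged in the text, is precisely the global (interior) curvature estimate: unlike the uniformly elliptic case one cannot get a time-independent bound here, and one must be content with a bound that deteriorates as $\epsilon\to 0$ and as $t\to\infty$; reconciling this with the passage to the limit $\epsilon\to 0$ in Theorem \ref{Intt0} is then a separate matter requiring the finer estimates (\ref{Int16})--(\ref{Int17}) that do not degenerate. Once (\ref{Int18})--(\ref{Int20}) are in hand, $W^{2,1}_p$ and then Schauder estimates give $u^\epsilon\in C^\infty(\ol\Omega\times(0,\infty))$ and close the continuity argument, with uniqueness following from the comparison principle for (\ref{Int3}).
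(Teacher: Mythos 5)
Your overall architecture (short-time existence, uniform $C^0$--$C^2$ estimates, continuation, then Schauder/Krylov--Safonov bootstrap, uniqueness by comparison) is the right skeleton and matches the paper's. However, the mechanism you propose for the global gradient estimate (\ref{Int18}) is not the one that closes the argument, and I think this is a genuine gap. You suggest testing an auxiliary quantity such as $\log w$ plus a function of $u$, or $1/(uw)$, using concavity and homogeneity to absorb terms. The paper's argument (Proposition \ref{Grep0}) is quite different and more specific: it differentiates the equation $G(D^2u,Du,u,u_t)=-\sigma$ in a spatial direction $x_k$ to obtain $\mathcal{L}u_k=0$, then observes from the explicit formula (\ref{C2b4}) that the zeroth-order coefficient satisfies $G_u \ge \tfrac{1}{u}(-2F+\sigma+1/w)$, which is nonnegative precisely when $F\le\sigma$ and $1/w\ge\sigma$. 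Propagation of $F\le\sigma$ is Lemma \ref{Grel1}. Once $G_u\ge0$, the maximum principle for the \emph{linear, homogeneous} equation satisfied by $u_k$ forces the max of $w$ to the parabolic boundary; at an interior point of $\partial\Omega\times(0,t^*)$ one then expands in a boundary-adapted frame and uses $\mathcal{H}_{\partial\Omega}\ge0$ together with the mean-curvature bound (\ref{Int12}) to derive $1/w\ge\sigma$ there, and the argument continues by openness. So the hypothesis $\mathcal{H}_{\partial\Omega}\ge0$ enters not as a barrier ingredient near $\partial\Omega$ but to control the boundary value of $w$ itself, and the two seed assumptions $F(\Sigma_0^\epsilon)\le\sigma$ and $1/w(u_0^\epsilon)>\sigma$ are what ignite $G_u\ge0$ at $t=0$. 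Without identifying $\mathcal{L}u_k=0$ and the sign of $G_u$, a generic auxiliary-function estimate would at best give a soft bound, and it is not clear it would produce the sharp $w\le 1/\sigma$ that makes the continuation close.

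A smaller inaccuracy: for (\ref{Int20}) the $\epsilon$- and $t$-dependence is not a consequence of "degenerating ellipticity constants near $\{u\to0\}$". The paper obtains (\ref{Int20}) by pairing the short-time existence Theorem \ref{Sel0} with the evolution equation for $h^1_1$ in Lemma \ref{C2glm0} and the standard parabolic maximum principle; the exponential time dependence comes from the growth terms in that evolution equation that cannot be absorbed without the sharp curvature maximum principle of Theorem \ref{c2gt0}, and that sharper principle is exactly what requires the extra threshold $\sigma>\sigma_0$ in Theorem \ref{Intt0}. For Theorem \ref{Intt2}, which asserts existence for all $\sigma\in(0,1)$, one must therefore accept the weaker $C(\epsilon,t)$; this is a feature of the curvature evolution inequality, not of a degeneration of the elliptic constants.
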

\begin{remark}
The a priori estimates (\ref{Int18}) and (\ref{Int19}) will be proved in section \ref{Gre} and \ref{C2b}, while (\ref{Int20}) can be derived
by combining Theorem \ref{Sel0} and Lemma \ref{C2glm0} with the standard maximum principle for parabolic equations.
\end{remark}
The paper is organized as follows. In Section \ref{Foh} we establish some basic identities for hypersurface in $\mathbb{H}^{n+1}.$ In Section \ref{Se} we state the short time existence theorem and derive evolution equations for some geometric quantities. In Section \ref{Gre} we use the mean convex condition on the boundary to establish a sharp global gradient bound for $u$. In Section \ref{C2b} we show the boundary second derivative estimates. In Section \ref{c2g} we prove a maximum principle for the maximal hyperbolic principal curvature by using radial graphs (this idea is from \cite{GSZ09}). Finally in Section \ref{Con} we prove that as $t\rightarrow\infty,$ $\Sigma(t)$ converges uniformly to a hypersurface $\tilde{\Sigma}$ satisfies $f(\kappa[\tilde{\Sigma}])=\sigma.$

\section{Formulas for hyperbolic principal curvatures}\label{Foh}
\subsection{Formulas on hypersurfaces}\label{Fohfirst}
 We will compare the induced hyperbolic and Euclidean metrics and derive some basic identities on a hypersurface.

Let $\Sigma$ be a hypersurface in $\mathbb{H}^{n+1}.$ We shall use $g$ and $\na$ to denote the induced hyperbolic metric and Levi-Civita connections on $\Sigma,$ respectively. Since $\Sigma$ also can be viewed as a submanifold of $\mathbb{R}^{n+1},$ we shall usually identify a geodesic quantity with respect to the Euclidean metric by adding a 'tilde' over the corresponding hyperbolic quantity. For instance, $\tg$ denotes the induced metric on $\Sigma$ from $\mathbb{R}^{n+1},$ and $\tna$ is its Levi-Civita connection.

Let $(z_1,\cdots,z_n)$ be local coordinates and
$$\tau_i=\frac{\partial}{\partial z_i},\;\;\mbox{$i=1,\cdots,n$}.$$
The hyperbolic and Euclidean metrics of $\Sigma$ are given by
\be\label{Foh0}
g_{ij}=\lll<\tau_i, \tau_j\rrr>_H,\;\;\tg_{ij}=\tau_i\cdot\tau_j=u^2g_{ij},
\ee
while the second fundamental forms are
\be\label{Foh1}
\begin{aligned}
&h_{ij}=\lll<D_{\tau_i}\tau_j, \nu_H\rrr>_H=-\lll<D_{\tau_i}\nu_H,\tau_j\rrr>_H,\\
&\thh_{ij}=\nu\cdot\tilde{D}_{\tau_i}\tau_j=-\tau_j\cdot\tilde{D}_{\tau_i}\nu,\\
\end{aligned}
\ee
where $D$ and $\tilde{D}$ denote the Levi-Civita connection of $\mathbb{H}^{n+1}$ and $\mathbb{R}^{n+1},$ respectively.
The following relations are well known (see equation(1.7),(1.8) of \cite{GS08} ):
\be\label{Foh2}
h_{ij}=\frac{1}{u}\thh_{ij}+\frac{\nu^{n+1}}{u^2}\tg_{ij}.
\ee
\be\label{Foh3}
\ka_i=u\tilde{\ka}_i+\nu^{n+1},\;\;\mbox{$i=1,\cdots,n,$}
\ee
where $\nu^{n+1}=\nu\cdot e_{n+1}.$

The Christoffel symbols are related by formula
\be\label{Foh9}
\Gamma^k_{ij}=\tilde{\Gamma}^k_{ij}-\frac{1}{u}(u_i\delta_{kj}+u_j\delta_{ik}-\tg^{kl}u_l\tg_{ij}).
\ee
It follows that for $v\in C^2(\Sigma)$
\be\label{Foh10}
\na_{ij}v=v_{ij}-\Gamma^k_{ij}v_k=\tna_{ij}v+\frac{1}{u}(u_iv_j+u_jv_i-\tg^{kl}u_kv_l\tg_{ij})
\ee
where and in the sequel (if no additional explanation)
\[v_i=\frac{\partial v}{\partial x_i},\;v_{ij}=\frac{\partial^2v}{\partial x_i\partial x_j},\;etc.\]
In particular,
\be\label{Foh11}
\na_{ij}u=\tna_{ij}u+\frac{2u_iu_j}{u}-\frac{1}{u}\tg^{kl}u_ku_l\tg_{ij}.
\ee
Moreover in $\mathbb{R}^{n+1},$
\be\label{Foh12}
\tg^{kl}u_ku_l=|\tna u|^2=1-(\nu^{n+1})^2
\ee
\be\label{Foh13}
\tna_{ij}u=\thh_{ij}\nu^{n+1}.
\ee
We note that all formulas listed above still hold for general local frame $\tau_1, \cdots, \tau_n.$ In particular, if $\tau_1, \cdots, \tau_n$ are orthonormal  in the hyperbolic metric, then $g_{ij}=\delta_{ij}$ and $\tg_{ij}=u^2\delta_{ij}.$

We now consider equation (\ref{Int0}) on $\Sigma.$ For $K$ as in section 1, let $\mathcal{A}$ be the vector space of $n\times n$ matrices and
\[\mathcal{A}_K=\left\{A=\{a_{ij}\}\in\mathcal{A}: \lambda(A)\in K\right\},\]
where $\lambda(A)=(\lambda_1,\cdots,\lambda_n)$ denotes the eigenvalues of $A.$ Let $F$ be the function defined by
\be\label{Foh4}
F(A)=f(\lambda(A)),\;\;A\in\mathcal{A}_K
\ee
and denote
\be\label{Foh5}
F^{ij}(A)=\frac{\partial F}{\partial a_{ij}}(A),\;\;F^{ij,kl}(A)=\frac{\partial^2F}{\partial a_{ij}\partial a_{kl}}(A).
\ee
Since $F(A)$ depends only on the eigenvalues of $A,$ if $A$ is symmetric then so is the matrix $\left\{F^{ij}(A)\right\}.$ Moreover,
\[F^{ij}(A)=f_i\delta_{ij}\]
when $A$ is diagonal, and
\be\label{Foh6}
F^{ij}(A)a_{ij}=\sum f_i(\lambda(A))\lambda_i=F(A),
\ee
\be\label{Foh7}
F^{ij}(A)a_{ik}a_{jk}=\sum f_i(\lambda(A))\lambda^2_i.
\ee
Equation (\ref{Int2}) can therefore be rewritten in a local frame $\tau_1,\cdots, \tau_n$ in the form
\be\label{Foh8}
\left\{
\begin{aligned}
&u_t=uw(F(A[\Sigma])-\sigma)&\,\,(x,t)\in\Omega\times[0,T),\\
&u(x,t)=0&\,\,(x,t)\in\partial\Omega\times[0,T),\\
&u(x,0)=u_0&\,\,(x,t)\in\Omega\times\{0\},
\end{aligned}
\right.
\ee
where $A[\Sigma]=\lll\{g^{ik}h_{kj}\rrr\}.$ Let $F^{ij}=F^{ij}\lll(A[\Sigma]\rrr),$ $F^{ij,kl}=F^{ij,kl}\lll(A[\Sigma]\rrr).$
\subsection{Vertical graphs}\label{Fohsecond} Suppose $\Sigma$ is locally represented as the graph of a function $u\in C^2(\Omega),$ $u>0,$ in a domain $\Omega\subset\mathbb{R}^n:$
\[\Sigma=\{(x,u(x))\in\mathbb{R}^{n+1}: x\in\Omega\}.\]
In this case we take $\nu$ to be the upward (Euclidean) unit normal vector field to $\Sigma:$
\[\nu=\lll(-\frac{Du}{w},\frac{1}{w}\rrr),\;w=\sqrt{1+|Du|^2}.\]
The Euclidean metric and second fundamental form of $\Sigma$ are given respectively by
\[\tg_{ij}=\delta_{ij}+u_iu_j,\]
and
\[\thh_{ij}=\frac{u_{ij}}{w}.\]
 As shown in \cite{CNS86}, the Euclidean principal curvature $\tilde{\kappa}[\Sigma]$ are the eigenvalues of symmetric matrix $\tilde{A}[u]=[\tilde{a}_{ij}]:$
\be\label{Foh14}
\tilde{a}_{ij}:=\frac{1}{w}\gamma^{ik}u_{kl}\gamma^{lj},
\ee
where
\[\gamma^{ij}=\delta_{ij}-\frac{u_iu_j}{w(1+w)}.\]
Note that the matrix $\{\gamma^{ij}\}$ is invertible with the inverse
\[\gamma_{ij}=\delta_{ij}+\frac{u_iu_j}{1+w}\]
which is the square root of $\{\tilde{g}_{ij}\},$ i.e., $\gamma_{ik}\gamma_{kj}=\tilde{g}_{ij}.$ From (\ref{Foh3}) we see that the hyperbolic principal curvatures $\kappa[u]$ of $\Sigma$ are eigenvalues of the matrix $A[u]=\{a_{ij}[u]\}:$
\be\label{Foh15}
a_{ij}:=\frac{1}{w}\lll(\delta_{ij}+u\gamma^{ik}u_{kl}\gamma^{lj}\rrr).
\ee
When $\Sigma$ is a vertical graph we can also define $F(A[\Sigma])=F(A[u]).$

\subsection{Radial graphs}\label{Fohthird} Let $\nabla'$ denote the covariant derivative on the standard unit sphere $\mathbb{S}^n$ in $\mathbb{R}^{n+1}$ and $y=e_{n+1}\cdot\mathbf{z}$ for $\mathbf{z}\in\mathbb{S}^{n}\subset\mathbb{R}^{n+1}.$ Let $\tau_1, \cdots, \tau_n$ be a local frame of smooth vector fields on the upper hemisphere $\mathbb{S}_+^n$ and denote $\tau_i\cdot\tau_j=\sigma_{ij}.$

Suppose that locally $\Sigma$ is a radial graph over the upper hemisphere $\mathbb{S}^n_+\subset\mathbb{R}^{n+1},$ i.e., it is locally represented as
\be\label{Foh16}
\mathbf{X}=e^v\mathbf{z},\;\;\mathbf{z}\in\mathbb{S}^n_+\subset\mathbb{R}^{n+1}.
\ee
The Euclidean metric, outward unit normal vector and second fundamental form of $\Sigma$ are
\be\label{Foh17}
\tg_{ij}=e^{2v}(\sigma_{ij}+\nabla'_iv\nabla'_jv),
\ee
\be\label{Foh18}
\nu=\frac{\mathbf{z}-\nabla'v}{w},\;\;w=(1+|\nabla'v|^2)^{1/2},
\ee
and
\be\label{Foh19}
\thh_{ij}=\frac{e^v}{w}(\nabla'_{ij}v-\nabla'_iv\nabla'_jv-\sigma_{ij})
\ee
respectively. Therefore the Euclidean principal curvatures are the eigenvalues of the matrix
\be\label{Foh20}
\tilde{a}_{ij}=\frac{e^{-v}}{w}\lll(\gamma^{ik}\nabla'_{kl}v\gamma^{lj}-\sigma_{ij}\rrr),
\ee
where
\[\gamma^{ij}=\sigma^{ij}-\frac{v^iv^j}{w(1+w)}\] and $v^i=\sigma^{ik}\nabla'_kv.$ Note that the height function is $u=ye^v.$ We see that the hyperbolic principal curvatures are the eigenvalues of matrix $A^s[v]=\{a^s_{ij}[v]\}:$
\be\label{Foh21}
a^s_{ij}[v]:=\frac{1}{w}\lll(y\gamma^{ik}\nabla'_{kl}v\gamma^{lj}-\mathbf{e}\cdot\nabla'v\sigma_{ij}\rrr).
\ee
In this case we can define $F(A[\Sigma])=F(A^s[v]).$

\section{Short time existence and Evolution equations} \label{Se}
\subsection{Short time existence}
\label{Se-sho}
In order to prove a global existence for the Dirichlet problem (\ref{Int3}), we first need a short time existence theorem.
Here we shall apply Theorem 3.1 of \cite{LX11} directly. For completeness let's restate the theorem as following:

\begin{theorem}\label{Sel0}
Let $G(D^2u, Du, u)$ be a nonlinear operator that is smooth with respect to $D^2u, Du$ and $u.$
Suppose that $G$ is defined for a function $u$ belonging to an open set $\Lambda\subset C^2(\Omega)$
and $G$ is elliptic for any $u\in\Lambda,$ i.e., $G^{ij}>0,$ then the initial value problem
\be\label{Se0}
\left\{
\begin{aligned}
&u_t=G(D^2u,Du,u)\;\;&\mbox{in $\Omega\times [0, T^*)$},\\
&u(x, 0)=u_0 \;\;&\mbox{in $\Omega\times \{0\}$},\\
&u(x, t)=0\;\; &\mbox{on $\partial\Omega\times [0, T^*)$},
\end{aligned}
\right.
\ee
has a unique smooth solution $u$ when $T^*=\epsilon >0$ small enough,
except for the corner, where $u_0\in\Lambda$ be of class $C^{\infty}(\ol\Omega).$
\end{theorem}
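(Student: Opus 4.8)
The statement is a standard short-time existence result for the fully nonlinear parabolic Cauchy--Dirichlet problem (\ref{Se0}), and the plan is to prove it by linearizing at the initial data, solving an auxiliary linear problem to obtain a good approximate solution, and then correcting it by a contraction argument in parabolic H\"older spaces. Since $u_0\in\Lambda$ and $\Lambda$ is open in $C^2(\Omega)$, the ellipticity hypothesis $G^{ij}>0$ holds at $u_0$, so by compactness of $\ol\Omega$ the linearization
\be
Lw:=G^{ij}D_{ij}w+G^{p_i}D_iw+G^{u}w,
\ee
where $G^{ij},G^{p_i},G^{u}$ denote the derivatives of $G$ with respect to the slots $D^2u,Du,u$ evaluated at $(D^2u_0,Du_0,u_0)$, is uniformly parabolic with coefficients in $C^{\infty}(\ol\Omega)$. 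Setting $g_0:=G(D^2u_0,Du_0,u_0)\in C^{\infty}(\ol\Omega)$, I would let $\bar u\in C^{2+\alpha,1+\alpha/2}(\ol\Omega\times[0,T])$ solve the \emph{linear} problem $\bar u_t=L\bar u+(g_0-Lu_0)$ in $\Omega\times[0,T]$ with $\bar u(\cdot,0)=u_0$ and $\bar u|_{\partial\Omega\times[0,T]}=0$; this is solvable by the classical linear parabolic theory (Ladyzhenskaya--Solonnikov--Ural'tseva), using the zeroth-order compatibility $u_0|_{\partial\Omega}=0$ implicit in (\ref{Se0}). Since $\bar u_t(\cdot,0)=Lu_0+(g_0-Lu_0)=g_0$, one has $\bar u\to u_0$ in $C^{2,1}(\ol\Omega\times[0,T])$ as $T\to0$, so $\bar u(\cdot,t)$ remains in the open set $\Lambda$ for $T$ small, and the defect $d:=\bar u_t-G(D^2\bar u,D\bar u,\bar u)$ vanishes at $t=0$ and hence satisfies $\|d\|_{C^{\alpha,\alpha/2}(\ol\Omega\times[0,T])}\to0$ as $T\to0$.

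Writing $u=\bar u+v$, the problem (\ref{Se0}) becomes $v_t-Lv=\mathcal N(v)-d$ with $v(\cdot,0)=0$ and $v|_{\partial\Omega\times[0,T]}=0$, where $\mathcal N(v)$ gathers the superlinear terms in $v$ together with the terms produced by the (small, since $\bar u$ is $C^{2,1}$-close to $u_0$) difference between the linearizations of $G$ at $\bar u$ and at $u_0$; thus $\mathcal N(0)=0$ and $\mathcal N$ is Lipschitz on small balls with a constant tending to $0$ as the radius and $T$ tend to $0$. Since $\partial_t-L$ is an isomorphism from $\{w\in C^{2+\alpha,1+\alpha/2}(\ol\Omega\times[0,T]):w(\cdot,0)=0,\ w|_{\partial\Omega\times[0,T]}=0\}$ onto $C^{\alpha,\alpha/2}(\ol\Omega\times[0,T])$, with inverse bounded uniformly for $T\le1$, the map $v\mapsto(\partial_t-L)^{-1}(\mathcal N(v)-d)$ is a contraction of a small ball of that space for some $T^*>0$ small enough; its unique fixed point $v$ gives a solution $u=\bar u+v\in C^{2+\alpha,1+\alpha/2}(\ol\Omega\times[0,T^*))$ of (\ref{Se0}) with $u(\cdot,t)\in\Lambda$. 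Uniqueness follows since the difference $u_1-u_2$ of two solutions solves a linear uniformly parabolic equation with zero initial and boundary data (write $G(\cdot)_1-G(\cdot)_2$ as the integral of its derivatives along the segment from $(D^2u_2,Du_2,u_2)$ to $(D^2u_1,Du_1,u_1)$), hence $u_1\equiv u_2$ by the maximum principle. Interior and lateral smoothness then follow by the standard bootstrap: difference quotients of $u$ in $x$ and in $t$ solve linear uniformly parabolic equations with H\"older coefficients, so parabolic Schauder estimates upgrade $u$ to $C^{\infty}$ in $\Omega\times(0,T^*)$ and up to $\partial\Omega\times(0,T^*)$.

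The main obstacle is the corner $\partial\Omega\times\{0\}$. The linear solvability used to produce $\bar u$, and the Schauder bootstrap above, both require --- for regularity \emph{up to} the corner --- the compatibility conditions obtained by repeatedly differentiating the equation and the boundary condition at $t=0$; while the zeroth-order condition $u_0|_{\partial\Omega}=0$ holds, the higher-order ones generically fail for an arbitrary $u_0\in\Lambda\cap C^{\infty}(\ol\Omega)$. Consequently the solution constructed above is smooth only away from the corner (equivalently, a $W^{2,1}_p$ solution after the corner is excised), which is exactly the conclusion as stated; full smoothness up to the corner would require first modifying $u_0$ near $\partial\Omega$ so that all compatibility conditions hold, or a separate local analysis in the corner region with weighted spaces. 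A complete and careful execution of this scheme is given in Theorem 3.1 of \cite{LX11}.
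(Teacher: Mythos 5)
The paper itself contains no proof of this theorem: immediately after the statement it says the result was proved in \cite{LX11} and omits the argument, so there is nothing internal to compare your proposal against. Taken on its own, your scheme (linearize at $u_0$, build an approximate solution $\bar u$ from the linear problem, correct by a contraction in a parabolic function space, uniqueness by the maximum principle applied to the linearized equation along a segment, then Schauder bootstrap away from the corner) is the standard and essentially correct route to short-time existence for such fully nonlinear parabolic Dirichlet problems.

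There is, however, one genuine soft spot. Your fixed-point argument is set in $C^{2+\alpha,1+\alpha/2}(\ol\Omega\times[0,T])$: you assert that the auxiliary linear problem for $\bar u$ is solvable in that space and that $\partial_t-L$ is an isomorphism onto $C^{\alpha,\alpha/2}$ from the subspace with zero initial and lateral data. Both assertions require the \emph{first-order} compatibility condition at the corner, namely $G(D^2u_0,Du_0,u_0)=0$ on $\partial\Omega$ (so that the prescribed initial time-derivative is consistent with the time-independent boundary value); this is not among the hypotheses and generically fails for the flows considered in this paper, where $u_t=uw(f-\sigma)$ need not vanish on $\partial\Omega$ at $t=0$. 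You correctly identify the corner obstruction in your last paragraph, but only as an obstacle to \emph{higher} regularity, whereas it already undermines the base space of your contraction: without first-order compatibility the classical $C^{2+\alpha,1+\alpha/2}$ solvability up to the corner is simply false, and the map you iterate is not defined on the space you chose. The standard repair, consistent with the regularity actually claimed in Theorem \ref{Intt0} ($u\in W^{2,1}_p$ with $p>4$, smooth except at the corner), is to run the entire construction in $W^{2,1}_p(\Omega\times[0,T])$ for large $p$, where linear solvability needs only the zeroth-order compatibility $u_0|_{\partial\Omega}=0$, and then bootstrap to $C^\infty$ in the interior and up to the lateral boundary for $t>0$. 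With that substitution your argument goes through; as written, the second paragraph does not.
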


Since Theorem \ref{Sel0} was proved in \cite{LX11}, we omit the proof here.

\medskip

\subsection{Evolution equations for some geometric quantities}\label{Evo}
For the reader's convenience, we now compute the evolution equations for some affine geometric quantities that were first derived in \cite{LX11}.
In this section we shall write $F_{ij}=\tna_{ij}F, u_{ij}=\tna_{ij}u, F^i_j=\tg^{il}F_{lj},$ etc.

\begin{lemma}\label{Evol0}(Evolution of the metrics). The metric $g_{ij}$ and $\tg_{ij}$ of $\Sigma(t)$ satisfies the evolution equations
\end{lemma}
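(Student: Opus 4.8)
The plan is to compute the evolution of the induced metrics directly from the flow equation $\dot{\mathbf{X}} = (F - \sigma)\nu_H$, using the standard first-variation-of-metric computation adapted to the hyperbolic ambient space. First I would recall that for the Euclidean induced metric $\tg_{ij} = \langle \mathbf{X}_i, \mathbf{X}_j\rangle$ (Euclidean inner product), differentiating in $t$ gives $\partial_t \tg_{ij} = \langle \partial_t \mathbf{X}_i, \mathbf{X}_j\rangle + \langle \mathbf{X}_i, \partial_t \mathbf{X}_j\rangle$, and since $\partial_t$ and $\partial_{z_i}$ commute, $\partial_t \mathbf{X}_i = \partial_{z_i}\big((F-\sigma)\nu_H\big)$. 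Expanding $\nu_H = u\nu$ and using $\tilde{D}_{\tau_i}\nu = -\thh_i^k \tau_k$ together with the formula $\partial_{z_i}u = u_i$, one collects the tangential and normal contributions; the normal part drops out of the metric variation after pairing with $\mathbf{X}_j$, leaving an expression in terms of $(F-\sigma)$, $u$, $u_i$, $\thij$, and $\tg_{ij}$. I expect the answer to take the shape $\partial_t \tg_{ij} = 2(F-\sigma)\big(u\,\thij + (\text{terms in } u_i u_j \text{ or } \nu^{n+1}\tg_{ij})\big)$ plus a term coming from $\partial_{z_i}(F-\sigma)$ paired against $\mathbf{X}_j$ — but that last term is tangential and, in fact, when one uses $\langle \nu_H, \mathbf{X}_j\rangle_H = 0$ it contributes only through the derivative hitting $\nu_H$, not $(F-\sigma)$, so it cancels. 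The cleanest route is to work at a point in a frame that is orthonormal for the hyperbolic metric, so $g_{ij} = \delta_{ij}$, $\tg_{ij} = u^2\delta_{ij}$, and $h_{ij}$ is essentially diagonalizable alongside.

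Next I would pass to the hyperbolic metric $g_{ij} = \langle \tau_i, \tau_j\rangle_H = u^{-2}\tg_{ij}$. Here there are two contributions: the variation of $\tg_{ij}$ computed above, and the variation of the conformal factor $u^{-2}$ coming from $\partial_t u = (F-\sigma)uw$ (equation \eqref{Int1}). So $\partial_t g_{ij} = u^{-2}\partial_t \tg_{ij} - 2u^{-3}(\partial_t u)\tg_{ij} = u^{-2}\partial_t \tg_{ij} - 2(F-\sigma)w\, g_{ij}$. Substituting the expression for $\partial_t \tg_{ij}$ and using the basic identities from Section \ref{Foh} — in particular \eqref{Foh2} relating $h_{ij}$ to $\thij$ and $\tg_{ij}$, \eqref{Foh12} giving $|\tna u|^2 = 1 - (\nu^{n+1})^2$, and $w = u/\nu_H^{n+1}$-type relations between $w$, $u$, and $\nu^{n+1}$ — I would simplify to obtain $\partial_t g_{ij}$ purely in terms of hyperbolic quantities: something of the form $\partial_t g_{ij} = 2(F-\sigma)h_{ij} - 2(F-\sigma)(\text{correction})g_{ij}$, where the correction accounts for the fact that the ambient space is hyperbolic rather than Euclidean (the normal $\nu_H$ has a "shift" under the ambient connection $D$ versus $\tilde D$). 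Most likely the stated identities are $\partial_t \tg_{ij} = 2(F-\sigma)\big(u\thh_{ij} + \tfrac{u_i u_j}{\text{(something)}}\big)$ type, and $\partial_t g_{ij} = 2(F-\sigma)\big(h_{ij} - \nu^{n+1} w^{-1}\cdots g_{ij}\big)$; I will match whatever clean form the identities \eqref{Foh2}–\eqref{Foh13} force.

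The key steps in order: (1) differentiate $\tg_{ij} = \mathbf{X}_i \cdot \mathbf{X}_j$ in $t$, commute $\partial_t$ with $\partial_{z_i}$, and expand $\partial_{z_i}((F-\sigma)\nu_H)$ using $\nu_H = u\nu$ and the Weingarten-type relation $\tilde D_{\tau_i}\nu = -\thh_i^k\tau_k$; (2) pair with $\mathbf{X}_j$, observe the $\partial_{z_i}(F-\sigma)$ term is purely normal hence annihilated, and symmetrize in $i,j$ to get $\partial_t \tg_{ij}$; (3) use $\partial_t u = (F-\sigma)uw$ and $g_{ij} = u^{-2}\tg_{ij}$ to get $\partial_t g_{ij}$; (4) clean up using the Section \ref{Foh} identities to express everything in terms of $h_{ij}$, $g_{ij}$, $u$, $\nu^{n+1}$, $w$. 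The main obstacle, modest as it is, is bookkeeping: correctly tracking how the hyperbolic normal $\nu_H = u\nu$ differentiates under the \emph{Euclidean} connection while the second fundamental forms $h_{ij}$ and $\thh_{ij}$ are defined via the hyperbolic and Euclidean connections respectively, so that the conversion \eqref{Foh2} must be invoked at exactly the right moment to avoid a proliferation of $u_i$, $\nu^{n+1}$ terms that in fact recombine into the clean geometric form. There is no deep difficulty here — it is the standard evolution-of-metric lemma — so I would present the computation compactly, doing it in a hyperbolically orthonormal frame to suppress index clutter, and state the final identities for both $g_{ij}$ and $\tg_{ij}$.
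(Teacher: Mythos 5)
Your plan is exactly the paper's argument: differentiate $\tg_{ij}=\tau_i\cdot\tau_j$ under the flow, note that the $\partial_{z_i}\bigl((F-\sigma)u\bigr)$ term pairs with $\nu$ and hence annihilates against $\tau_j$, apply the Weingarten relation $\tilde D_{\tau_i}\nu\cdot\tau_j=-\thij$ to get $\dot{\tg}_{ij}=-2(F-\sigma)u\thij$, and then pass to $g_{ij}=u^{-2}\tg_{ij}$ via the product rule with $u_t=(F-\sigma)uw$. The one small over-hedge in your write-up is the expectation of stray $u_iu_j$ or $\nu^{n+1}\tg_{ij}$ corrections in $\dot{\tg}_{ij}$: none arise, and the paper's $\dot{g}_{ij}=-2u^{-2}\tg_{ij}(F-\sigma)w-2u^{-1}(F-\sigma)\thij$ is left in mixed $(\tg,\thh,u,w)$ form rather than repackaged through \eqref{Foh2} into purely hyperbolic quantities.
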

\be\label{Evo8}\dot{g}_{ij}=-2u^{-2}\tgij(F-\sigma)w-2u^{-1}(F-\sigma)\thij,\ee
and
\be\label{Evo7}\dot{\tilde{g}}_{ij}=-2(F-\sigma)u\tilde{h}_{ij}.\ee
\begin{proof}
Since $\tgij=\tau_i\cdot\tau_j,$
\begin{align*}
&\frac{\partial}{\partial t}\tgij=2\lll<\tilde{D}_{\tau_i}\dot{X},\tilde{D}_{\tau_j}X\rrr>\\
&=2\lll<\tilde{D}_{\tau_i}[(F-\sigma)u\nu],\tau_j\rrr>\\
&=2(F-\sigma)u\lll<\tilde{D}_{\tau_i}\nu,\tau_j\rrr>\\
&=-2(F-\sigma)u\thij.
\end{align*}
From equation (\ref{Foh0}) we get
\begin{align*}
\frac{\partial}{\partial t}g_{ij}&=-2u^{-3}\tgij u_t+u^{-2}\dot{\tilde{g}}_{ij}\\
&=-2u^{-3}\tgij(F-\sigma)uw-2u^{-2}(F-\sigma)u\thij\\
&=-2u^{-2}\tgij(F-\sigma)w-2u^{-1}(F-\sigma)\thij.
\end{align*}
\end{proof}

\begin{lemma}\label{Evol1}(Evolution of the normal). The normal vector evolves according to
\be\label{Evo9} \dot{\nu}=-\tg^{ij}[(F-\sigma)u]_i\tau_j,\ee
moreover,
\be\label{Evo10}\dot{\nu}^{n+1}=-\tilde{g}^{ij}[(F-\sigma)u]_iu_j.\ee
\end{lemma}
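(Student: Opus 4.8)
The plan is to exploit that $\nu$ is the \emph{Euclidean} upward unit normal, so that along the flow it stays a unit vector orthogonal to the tangent spaces of $\Sigma(t)$; differentiating these two relations in $t$ will determine $\dot\nu$ completely. I will work in a local coordinate frame $\tau_i=\mathbf{X}_i$ and with the flat ambient connection $\tilde D$ of $\mathbb{R}^{n+1}$, recalling that $\dot{\mathbf X}=\F u\,\nu$.

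First I would record, just as at the start of the proof of Lemma \ref{Evol0}, that since $\partial_t$ and $\partial_{z_i}$ commute and $\tilde D$ is flat, $\tilde D_{\partial_t}\tau_i=\tilde D_{\tau_i}\dot{\mathbf X}=\tilde D_{\tau_i}(\F u\,\nu)=[\F u]_i\,\nu+\F u\,\tilde D_{\tau_i}\nu$. Differentiating $\nu\cdot\nu=1$ in $t$ gives $\dot\nu\cdot\nu=0$, so $\dot\nu$ is tangent to $\Sigma(t)$ and can be written $\dot\nu=c^j\tau_j$. Differentiating $\nu\cdot\tau_i=0$ in $t$, substituting the expression for $\tilde D_{\partial_t}\tau_i$, and using $\nu\cdot\nu=1$ together with $\nu\cdot\tilde D_{\tau_i}\nu=\tfrac12(|\nu|^2)_i=0$, I get $\dot\nu\cdot\tau_i=-\,\nu\cdot\tilde D_{\partial_t}\tau_i=-[\F u]_i$. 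Since $\dot\nu\cdot\tau_i=c^j\tg_{ji}$, inverting the metric yields $c^j=-\tg^{ij}[\F u]_i$, which is (\ref{Evo9}).

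For (\ref{Evo10}) I would simply take the Euclidean inner product of (\ref{Evo9}) with $e_{n+1}$: on the graph $\tau_j=\mathbf{X}_j=e_j+u_je_{n+1}$, so $\tau_j\cdot e_{n+1}=u_j$, and hence $\dot\nu^{n+1}=\dot\nu\cdot e_{n+1}=-\tg^{ij}[\F u]_iu_j$.

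I do not expect a genuine obstacle here; the only point requiring care — exactly as in Lemma \ref{Evol0} — is to stay consistently with the flat ambient derivative $\tilde D$ and the Euclidean normal $\nu$, so that $|\nu|^2=1$ and $\nu\perp\tau_i$ are the clean identities used above, and to invoke $\tilde D_{\partial_t}\tau_i=\tilde D_{\tau_i}\dot{\mathbf X}$ to bring the flow equation (\ref{Int0}) into the computation.
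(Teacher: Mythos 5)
Your argument is correct and is essentially the same as the paper's: both differentiate $\nu\cdot\tau_i=0$ in $t$, use $\tilde D_{\partial_t}\tau_i=\tilde D_{\tau_i}\dot{\mathbf X}$ together with the flow law, and kill the $\F u\,\tilde D_{\tau_i}\nu$ term via $\nu\cdot\tilde D_{\tau_i}\nu=0$ before inverting $\tg$. The only difference is that you justify the tangency of $\dot\nu$ by differentiating $\nu\cdot\nu=1$ and spell out the product rule that the paper compresses into one line.
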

\begin{proof}
Since $\nu$ is the unit normal vector of $\Sigma$, we have $\dot{\nu}\in T(\Sigma).$ Furthermore, differentiating
$$\lll<\nu,\tau_i\rrr>=\lll<\nu,\tilde{D}_{\tau_i}X\rrr>=0,$$
with respect to $t$ we deduce
\begin{align*}
\lll<\dot{\nu},\tau_i\rrr>&=-\lll<\nu,\tilde{D}_{\tau_i}[(F-\sigma)u\nu]\rrr>\\
&=-\lll<\nu,[(F-\sigma)u]_i\nu\rrr>\\
&=-[(F-\sigma)u]_i.
\end{align*}
So we have
\[\dot{\nu}=-\tguij[(F-\sigma)u]_i\tau_j,\]
and (\ref{Evo10}) follows directly from
\[\dot{\nu}^{n+1}=\lll<\dot{\nu},\mathbf{e}\rrr>=-\tguij[(F-\sigma)u]_iu_j.\]
\end{proof}

\begin{lemma}\label{Evol2}(Evolution of the second fundamental form). The second fundamental form evolves according to
\be\label{Evo11}\dot{\tilde{h}}^l_i=[(F-\sigma)u]^l_i+u(F-\sigma)\tilde{h}^k_i\tilde{h}^l_k,\ee
\be\label{Evo12}\dot{\tilde{h}}_{ij}=[(F-\sigma)u]_{ij}-u(F-\sigma)\tilde{h}^k_i\tilde{h}_{kj},\ee
and
\be\label{Evo13}\begin{aligned}&\dot{h}_{ij}=\frac{1}{u}\{[(F-\sigma)u]_{ij}-u(F-\sigma)\tilde{h}^k_i\tilde{h}_{kj}\}
-\frac{\thij}{u}w(F-\sigma)\\
&-\{\tg^{kl}[u(F-\sigma)]_ku_l\}\frac{\tgij}{u^2}
-2\frac{(F-\sigma)\nu^{n+1}}{u}\thij-2\frac{\tgij}{u^2}(F-\sigma).\end{aligned}\ee
\end{lemma}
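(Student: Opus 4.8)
We sketch the argument, which is a direct computation once Lemmas \ref{Evol0} and \ref{Evol1} are in hand. The plan is to differentiate each of the three defining relations for the second fundamental form in $t$, using that in the flat ambient space $\mathbb{R}^{n+1}$ the derivative $\partial_t$ commutes with the fixed coordinate derivatives $\partial_{z_i}$, and then to substitute the evolutions of the metrics and of the normal. Write $\phi:=(F-\sigma)u$ for the Euclidean normal speed, so that $\dot{\mathbf X}=\phi\nu$; recall $\dot\nu=-\tilde g^{kl}\phi_k\tau_l$ from Lemma \ref{Evol1} and $\partial_t\tau_i=\partial_{z_i}(\phi\nu)$.

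The main step is (\ref{Evo12}). Starting from $\thij=\nu\cdot\tilde D_{\tau_i}\tau_j=\nu\cdot\partial_{z_i}\partial_{z_j}\mathbf X$, I would split $\dot{\thh}_{ij}=\dot\nu\cdot\partial_{z_i}\partial_{z_j}\mathbf X+\nu\cdot\partial_{z_i}\partial_{z_j}(\phi\nu)$. For the first term the Gauss formula $\partial_{z_i}\partial_{z_j}\mathbf X=\tilde\Gamma^k_{ij}\tau_k+\thij\nu$ together with $\dot\nu\in T\Sigma$ gives exactly $-\tilde\Gamma^k_{ij}\phi_k$. For the second, expand $\partial_{z_i}\partial_{z_j}(\phi\nu)=\phi_{ij}\nu+\phi_i\nu_j+\phi_j\nu_i+\phi\,\partial_{z_i}\nu_j$ and take the inner product with $\nu$; using $|\nu|^2\equiv1$ (hence $\nu\cdot\nu_i=0$ and $\nu\cdot\partial_{z_i}\nu_j=-\nu_i\cdot\nu_j$) and the Weingarten relation $\nu_i=\tilde D_{\tau_i}\nu=-\thh^k_i\tau_k$ (so that $\nu_i\cdot\nu_j=\thh^k_i\thh_{kj}$), this term equals $\phi_{ij}-\phi\,\thh^k_i\thh_{kj}$. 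Adding the two contributions, $\phi_{ij}-\tilde\Gamma^k_{ij}\phi_k$ is precisely the Euclidean covariant Hessian $\tna_{ij}\phi=[(F-\sigma)u]_{ij}$, which proves (\ref{Evo12}).

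Equation (\ref{Evo11}) then follows by raising an index: since $\thh^l_i=\tilde g^{lk}\thh_{ki}$ and $\partial_t\tilde g^{lk}=-\tilde g^{lp}\tilde g^{kq}\dot{\tg}_{pq}=2\phi\,\tilde h^{lk}$ by (\ref{Evo7}), we get $\dot{\thh}^l_i=2\phi\,\thh^l_k\thh^k_i+\tilde g^{lk}\dot{\thh}_{ki}$; plugging in (\ref{Evo12}) and collecting the two curvature-quadratic terms produces $[(F-\sigma)u]^l_i+u(F-\sigma)\thh^k_i\thh^l_k$. For (\ref{Evo13}) I would differentiate the pointwise algebraic identity (\ref{Foh2}), $h_{ij}=\frac1u\thij+\frac{\nu^{n+1}}{u^2}\tgij$, in $t$, and substitute $u_t=(F-\sigma)uw$ from (\ref{Int1}), $\dot{\tg}_{ij}=-2(F-\sigma)u\thij$ from (\ref{Evo7}), $\dot{\nu}^{n+1}=-\tilde g^{kl}[(F-\sigma)u]_ku_l$ from (\ref{Evo10}), and (\ref{Evo12}). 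The only simplification that is not purely mechanical is that the contribution of $\partial_t(u^{-2})$ to the last term of (\ref{Foh2}) carries a factor $w\,\nu^{n+1}$, which equals $1$ since $\nu=(-Du/w,1/w)$; that contribution therefore collapses to $-2u^{-2}(F-\sigma)\tgij$, and altogether the five resulting terms coincide with the claimed formula term by term.

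I do not expect a real obstacle: the computation is elementary. The two places that demand care are (i) keeping the ambient second partials $\partial_{z_i}\partial_{z_j}$ distinct from the intrinsic covariant Hessian $\tna_{ij}$ — the Christoffel correction bridging them is supplied exactly by the tangential vector $\dot\nu$ — and (ii) the index bookkeeping when raising and lowering with the time-dependent metric $\tgij$, both in deducing (\ref{Evo11}) from (\ref{Evo12}) and in expanding (\ref{Evo13}). Because of the number of lower-order terms in (\ref{Evo13}), I would verify it by matching each term against the statement rather than by a slick regrouping.
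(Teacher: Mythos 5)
Your argument is correct, and all three identities come out with the right signs; but you reach \eqref{Evo11}--\eqref{Evo12} in the opposite order from the paper and by a different decomposition. You differentiate $\thh_{ij}=\nu\cdot\partial_{z_i}\partial_{z_j}\mathbf{X}$ in $t$ directly, split the result via $\dot\nu\in T\Sigma$ together with the Gauss formula and the Weingarten relation $\nu_i=-\thh^k_i\tau_k$, and recognize $\phi_{ij}-\tilde\Gamma^k_{ij}\phi_k=\tna_{ij}\phi$ — giving \eqref{Evo12} first, with \eqref{Evo11} following by raising an index with the time-dependent $\tg^{lk}$ (using $\dot\tg^{lk}=2\phi\thh^{lk}$ from \eqref{Evo7}, which supplies the sign flip in the curvature-quadratic term). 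The paper instead obtains \eqref{Evo11} first, by differentiating the evolution equation for $\nu$ from Lemma \ref{Evol1} \emph{spatially} and matching it against the time derivative of the Weingarten equation $\nu_i=-\thh^k_i\tau_k$ (paired with $\dot\tau_i=\tilde D_{\tau_i}\dot{\mathbf X}$), then lowers the index via \eqref{Evo7} to get \eqref{Evo12}. The two routes are essentially dual: yours starts from a scalar identity for $\thh_{ij}$ and raises, the paper's starts from a vector identity for $\nu_i$ and lowers; both invoke exactly Lemmas \ref{Evol0}, \ref{Evol1} and the Weingarten relation, so neither buys anything extra, though your ordering avoids having to differentiate the Weingarten map in time. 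The final step for \eqref{Evo13} — differentiating \eqref{Foh2}, substituting $u_t$, $\dot\tg_{ij}$, $\dot\nu^{n+1}$, $\dot{\thh}_{ij}$, and using $w\nu^{n+1}=1$ to collapse the last term — is identical to the paper's and your remark about where that cancellation occurs is exactly the point.
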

\begin{proof}
Differentiating (\ref{Evo9}) with respect to $\tau_i$ we get
\[\frac{\partial}{\partial t}\nu_i=-\tgukl[(F-\sigma)u]_{ki}\tau_l
-\tgukl[(F-\sigma)u]_k\tilde{D}_{\tau_i}\tau_l.\]
On the other hand, in view of the Weingarten Equation
\[\nu_i=-\tg^{kl}\thh_{li}\tau_k\Rightarrow\dot{\nu_i}=-\dot{\thh}^k_i\tau_k-\thh^k_i\tilde{D}_{\tau_k}\dot{X},\]
where $\thh^k_i=\tg^{kl}\thh_{li}$ is mixed tensor.
Multiplying by $\tau_j$ we get
\[-\dot{\thh}^k_i\tg_{kj}-\thh^k_i\lll<\tilde{D}_{\tau_k}\dot{X},
\tau_j\rrr>=-\tgukl[(F-\sigma)u]_{ki}\tg_{lj}.\]
Thus
\begin{align*}\dot{\thh}^k_i\tg_{kj}&=\tgukl[(F-\sigma)u]_{ki}\tg_{lj}-\thh^k_iu(F-\sigma)\lll<\tilde{D}_{\tau_k}\nu,
\tau_j\rrr>\\
&=[(F-\sigma)u]_{ij}+u(F-\sigma)\thh^k_i\thh_{kj}.\end{align*}
Multiplying by $\tg^{jl}$ we get (\ref{Evo11}).

Moreover, since $\thij=\thh^l_i\tg_{lj},$ by equation (\ref{Evo7}) and (\ref{Evo11}) we have
\begin{align*}
\dot{\thh}_{ij}&=\dot{\thh}^l_i\tg_{lj}+\thh^l_i\dot{\tg}_{lj}\\
&=[(F-\sigma)u]^l_i\tg_{lj}+u(F-\sigma)\thh^k_i\thh^l_k\tg_{lj}+\thh^l_i[-2(F-\sigma)u\thh_{lj}]\\
&=[(F-\sigma)u]_{ij}-u(F-\sigma)\thh^k_i\thh_{kj}.
\end{align*}
Finally, by differentiating (\ref{Foh2}) with respect to $t$,
we get
\be\label{Evo1}
\begin{aligned}
&\frac{\partial}{\partial t}h_{ij}=\frac{1}{u}\dot{\thh}_{ij}-\frac{\thij}{u^2}u_t+\frac{\tgij}{u^2}\dot{\nu}^{n+1}
+\frac{\nu^{n+1}}{u^2}\dot{\tg}_{ij}-2\frac{\nu^{n+1}\tgij}{u^3}u_t\\
&=\frac{1}{u}\{[(F-\sigma)u]_{ij}-u(F-\sigma)\thh^k_i\thh_{kj}\}-\frac{\thij}{u}w(F-\sigma)\\
&+\frac{\tgij}{u^2}\{-\tg^{kl}[u(F-\sigma)]_ku_l\}+\frac{\nu^{n+1}}{u^2}[-2(F-\sigma)u\thij]
-2\frac{\nu^{n+1}\tgij}{u^3}uw(F-\sigma)\\
&=\frac{1}{u}\{[(F-\sigma)u]_{ij}-u(F-\sigma)\thh^k_i\thh_{kj}\}-\frac{\thij}{u}w(F-\sigma)\\
&-\{\tgukl[u(F-\sigma)]_ku_l\}\frac{\tgij}{u^2}-2\frac{(F-\sigma)\nu^{n+1}}{u}\thij-2\frac{\tgij}{u^2}(F-\sigma).
\end{aligned}
\ee
\end{proof}

\begin{lemma}\label{Evol3}(Evolution of F). The term $F$ evolves according to the equation
\be\label{Evo14}\begin{aligned}&F_t=\frac{1}{u}F^{ij}[(F-\sigma)u]_{ij}+(F-\sigma)\lll[\sum f_s\kappa^2_s-2\nu^{n+1}F+(\nu^{n+1})^2\sum f_s\rrr]\\
&+w(F-\sigma)\lll(F-\nu^{n+1}\sum f_s\rrr)-[(F-\sigma)u]_iu^i\sum f_s.\end{aligned}\ee
\end{lemma}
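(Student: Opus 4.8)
The plan is to differentiate the defining relation $F = F(A[\Sigma])$ in time via the chain rule, $F_t = F^{ij}\dot a_{ij}$, and then substitute the evolution of the mixed shape operator $A[\Sigma]=\{g^{ik}h_{kj}\}$. Rather than computing $\dot a_{ij}$ from scratch, I would write $a^i_j = g^{ik}h_{kj}$ and use $\dot a^i_j = \dot g^{ik}h_{kj} + g^{ik}\dot h_{kj}$, plugging in the evolution equations already proved: \eqref{Evo8} for $\dot g_{ij}$ (hence $\dot g^{ij} = -g^{ik}g^{jl}\dot g_{kl}$) and \eqref{Evo13} for $\dot h_{ij}$. Throughout I would work at a point in a hyperbolic-orthonormal frame, so that $g_{ij}=\delta_{ij}$, $\tg_{ij}=u^2\delta_{ij}$, $F^{ij}=f_i\delta_{ij}$ is diagonal, and the algebraic identities \eqref{Foh6}, \eqref{Foh7} are available to collapse sums like $F^{ij}h_{ik}h_{jk} = \sum f_s\kappa_s^2$ and $F^{ij}g_{ij} = \sum f_s$.

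The main bookkeeping step is to organize the terms of $F^{ij}\dot a_{ij}$ by source. From $\dot g^{ik}h_{kj}$ contracted against $F^{ij}$ one gets, using \eqref{Evo8}, a term $2u^{-2}w(F-\sigma)\sum f_s\kappa_s$ plus a term $2u^{-1}(F-\sigma)F^{ij}h_{ik}h_{kj} = 2u^{-1}(F-\sigma)\sum f_s\kappa_s^2$ (here I use $\kappa_s = u\tilde\kappa_s+\nu^{n+1}$ from \eqref{Foh3} only where it helps). From $g^{ik}\dot h_{kj}$ contracted against $F^{ij}$, the five terms of \eqref{Evo13} give: the leading elliptic term $\tfrac1u F^{ij}[(F-\sigma)u]_{ij}$; a term $-\tfrac{w(F-\sigma)}{u}\sum f_s\kappa_s$ from the $\thij/u$ piece (since $u^{-1}\thh_{ij}$ combined with $g^{ik}$ produces $\tilde\kappa$'s, and $u\tilde\kappa_s$ assembles with the $\nu^{n+1}$ terms below); a term $-[(F-\sigma)u]_iu^i\sum f_s$ from the $\tgij/u^2$ piece (note $\tg^{kl}[u(F-\sigma)]_ku_l = u^{-2}[u(F-\sigma)]_iu^i$ in the orthonormal frame); a term $-2u^{-1}(F-\sigma)\nu^{n+1}\sum f_s\kappa_s$ from the fourth piece; and $-2(F-\sigma)\sum f_s$ from the last piece. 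I would then combine all the $\sum f_s\kappa_s$ coefficients and repeatedly apply $\kappa_s = u\tilde\kappa_s + \nu^{n+1}$ together with $F = \sum f_s\kappa_s$ (homogeneity, \eqref{Foh6}) to recognize $\sum f_s\kappa_s - \nu^{n+1}\sum f_s = $ (something proportional to) $F - \nu^{n+1}\sum f_s$, which is exactly how the factor $\bigl(F-\nu^{n+1}\sum f_s\bigr)$ in the claimed formula arises; likewise the cross terms in $\kappa_s^2 = u^2\tilde\kappa_s^2 + 2u\tilde\kappa_s\nu^{n+1} + (\nu^{n+1})^2$ are what regroup the $u^{-2}\cdot u$ and $u^{-1}$ prefactors into the clean combination $\sum f_s\kappa_s^2 - 2\nu^{n+1}F + (\nu^{n+1})^2\sum f_s$ displayed in \eqref{Evo14}.

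The step I expect to be the genuine obstacle is precisely this regrouping: the raw expansion produces many terms carrying mixed powers of $u$, $w$, $\nu^{n+1}$, $\tilde\kappa_s$, and $\tilde h_{ij}$, and it is not visually obvious that they collapse to the three compact brackets in \eqref{Evo14}. The key algebraic facts that make it work are $|\tna u|^2 = 1-(\nu^{n+1})^2$ (\eqref{Foh12}) — needed to eliminate a stray $|\tna u|^2$ against a $(\nu^{n+1})^2$ and reconstitute $\sum f_s$ — and the homogeneity identities \eqref{Foh6}, \eqref{Foh7}, which are the only way to turn $F^{ij}$-contractions of $h$ and $h^2$ into symmetric-function expressions in $\kappa$. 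A secondary care point is consistency of the notational conventions flagged before the lemma ($F_{ij}=\tna_{ij}F$, $u_{ij}=\tna_{ij}u$, indices raised with $\tg$), so that the elliptic term really appears as $\tfrac1u F^{ij}[(F-\sigma)u]_{ij}$ with $[(F-\sigma)u]_{ij}$ a \emph{Euclidean} Hessian. Once the terms are sorted, the identity follows by direct comparison, and the lemma is proved. Since this is the last statement of the excerpt, I would end the proof here with the customary $\square$.
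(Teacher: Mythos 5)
Your plan is valid in outline --- chain rule $F_t = F^{ij}\dot a_{ij}$ plus evolution equations --- but you take a longer route than the paper, and several of your intermediate term claims carry wrong powers of $u$. The paper does \emph{not} split $\dot a^i_j$ as $\dot g^{ik}h_{kj}+g^{ik}\dot h_{kj}$ and feed in \eqref{Evo8}/\eqref{Evo13}. Instead it differentiates the conversion identity $h^j_i = u\thh^j_i + \nu^{n+1}\delta^j_i$ directly in time, so that $F_t = F^{ij}\big(u_t\thh^j_i + u\dot{\thh}^j_i + \dot\nu^{n+1}\delta_{ij}\big)$, and then substitutes only the short formulas \eqref{Evo10} and \eqref{Evo11} together with $u_t=uw(F-\sigma)$. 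This is considerably cleaner: \eqref{Evo11} for $\dot{\thh}^j_i$ is a two--term formula, whereas \eqref{Evo13} has five terms and was itself obtained from \eqref{Evo12} via the same conversion identity and the vertical-graph relation $w\nu^{n+1}=1$. Your route would force you to unpack \eqref{Evo13}, and indeed you would find at the end that the stray pieces only cancel after invoking $w\nu^{n+1}=1$ again (not \eqref{Foh12}, which you cite but which is not actually used by either route); this is precisely the double-counting the paper avoids. Both routes do land on \eqref{Evo14}, so yours is not wrong, just less economical.

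Concretely, though, several of the coefficients you announce would not survive a careful check. In the hyperbolic-orthonormal frame one has $\tg_{ij}=u^2\delta_{ij}$ and hence, from \eqref{Foh2}, $\thh_{ij}=u(\kappa_i-\nu^{n+1})\delta_{ij}$ and $\thh^j_i=\tg^{jk}\thh_{ki}=u^{-1}(\kappa_i-\nu^{n+1})\delta_{ij}$. With these, the $\dot g^{ik}h_{kj}$ contribution is $2w(F-\sigma)F+2(F-\sigma)\sum f_s\kappa_s^2-2(F-\sigma)\nu^{n+1}F$ --- no factors of $u^{-1}$ or $u^{-2}$ survive, contrary to the ``$2u^{-2}w(F-\sigma)\sum f_s\kappa_s$'' and ``$2u^{-1}(F-\sigma)\sum f_s\kappa_s^2$'' you write. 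Likewise the $\thij/u$ piece of \eqref{Evo13} contributes $-w(F-\sigma)\big(F-\nu^{n+1}\sum f_s\big)$, not $-\tfrac{w(F-\sigma)}{u}\sum f_s\kappa_s$, and the fourth piece contributes $-2(F-\sigma)\nu^{n+1}\big(F-\nu^{n+1}\sum f_s\big)$, again without a $u^{-1}$. These errors all stem from treating $\thh_{ij}$ as $u\kappa_i-\nu^{n+1}$ rather than $u(\kappa_i-\nu^{n+1})$; with that single correction, your route would close and agree with \eqref{Evo14}, but the paper's choice of differentiating $h^j_i = u\thh^j_i+\nu^{n+1}\delta^j_i$ is the shorter and more robust path.
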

\begin{proof}
 We consider $F$ with respect to the mixed tensor $h^j_i.$ From equation (\ref{Foh2}), (\ref{Evo10}), and (\ref{Evo11}) we conclude
\be\label{Evo2}
\begin{aligned}
&F_t=F^{ij}(h^j_i)_t=F^{ij}\lll(u\thh^j_i+\nu^{n+1}\delta_{ij}\rrr)_t\\
&=uF^{ij}[(F-\sigma)u]^j_i+u^2(F-\sigma)F^{ij}\thh^k_i\thh^j_k\\
&+uw(F-\sigma)F^{ij}\thh^j_i-[(F-\sigma)u]_iu^i\sum f_s\\
&=\frac{1}{u}F^{ij}[(F-\sigma)u]_{ij}+(F-\sigma)\lll[\sum f_s\kappa_s^2-2\nu^{n+1}F+(\nu^{n+1})^2\sum f_s\rrr]\\
&+w(F-\sigma)\lll(F-\nu^{n+1}\sum f_s\rrr)-[(F-\sigma)u]_iu^i\sum f_s.
\end{aligned}
\ee
\end{proof}

\section{Gradient estimates} \label{Gre}
In this section we will show that the angle between the upward unit normal and $e_{n+1}$ axis
is bounded above upon approaching the boundary. We will also prove Proposition \ref{Grep0} which
gives us a global gradient bound for the solution.

The following lemma is similar to Theorem 3.1 of \cite{GS10}.
\begin{lemma}\label{Grel0}
For $\epsilon>0$ sufficiently small,
\be\label{Gre11}
\frac{\sigma-\nu^{n+1}}{u}<\frac{\sqrt{1-\sigma^2}}{r_1}+\frac{\epsilon(1+\sigma)}{r_1^2}\,on\,\partial\Omega\times[0,T),
\ee
where $r_1$ is the maximal radii of exterior sphere to $\partial\Omega.$
\end{lemma}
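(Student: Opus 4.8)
The plan is to construct an explicit barrier from above using a piece of a sphere that, in the half-space model, corresponds to a horocycle-type or equidistant hypersurface of constant hyperbolic principal curvature, and then apply the comparison (maximum) principle for the flow at a boundary point. Concretely, fix a boundary point $x_0\in\partial\Omega$ and let $B_{r_1}(a)$ be an exterior ball to $\Omega$ at $x_0$ (so $\partial\Omega$ touches this ball from outside at $x_0$), with center $a\in\mathbb{R}^n\times\{0\}$; here I use that $r_1$ is the maximal radius of such an exterior sphere. I would take as a comparison surface the lower hemisphere in $\mathbb{R}^{n+1}_+$ of a Euclidean sphere centered at $(a,0)$ of radius slightly larger than $r_1$, say $\rho=r_1+\epsilon/(1+\sigma)$ or a comparable choice, truncated to a suitable spherical cap. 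Such a hemisphere is a hyperbolic hypersurface all of whose hyperbolic principal curvatures equal a constant $\kappa\in(-1,1)$ determined by the tilt; by homogeneity and normalization $f(\kappa,\dots,\kappa)=\kappa$, so it is a stationary (or sub/super) solution to the flow precisely when $\kappa=\sigma$, and one can arrange $f(\kappa[\cdot])-\sigma$ to have the right sign for a supersolution near the boundary by perturbing $\rho$.

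The key steps, in order, are: (1) write the candidate barrier $\bar u$ over $\Omega$ as (a branch of) the Euclidean sphere $|x-a|^2+x_{n+1}^2=\rho^2$ restricted near $x_0$, verify it satisfies the boundary inequality $\bar u\ge u=\epsilon$ on $\partial\Omega$ near $x_0$ (using the exterior-ball geometry and that $\epsilon$ is small), and that $\bar u\ge u_0^\epsilon$ initially after possibly adding a small vertical shift; (2) compute the hyperbolic principal curvatures of $\bar u$ from $(\ref{Foh3})$, $\kappa_i=u\tilde\kappa_i+\nu^{n+1}$, where for a sphere of Euclidean radius $\rho$ one has $\tilde\kappa_i=1/\rho$ and $\nu^{n+1}=x_{n+1}/\rho$, hence $\kappa_i\equiv x_{n+1}/\rho+x_{n+1}/\rho$ — more carefully $\kappa_i = u/\rho + \nu^{n+1}$, and for the appropriate hemisphere this is a constant; (3) using concavity and homogeneity of $f$ plus $(\ref{Int9})$ conclude $f(\kappa[\bar u])=\kappa$, so that $\bar u$ is a supersolution of $(\ref{Int3})$ provided $\kappa\le\sigma$, which dictates the allowable $\rho$; (4) invoke the parabolic comparison principle on the relevant subdomain to get $u\le\bar u$ there for all $t$, with equality at $x_0$ at the touching point, and differentiate to compare normal derivatives at $x_0$; (5) translate the normal-derivative comparison into the stated bound on $(\sigma-\nu^{n+1})/u$ at $x_0$ by an elementary computation, tracking the $1/\rho$ and $1/\rho^2$ terms and expanding in $\epsilon$ to produce the right-hand side $\sqrt{1-\sigma^2}/r_1 + \epsilon(1+\sigma)/r_1^2$ up to the claimed inequality.

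I anticipate the main obstacle is twofold. First, matching the barrier's value to the shifted boundary datum $u=\epsilon$ while keeping its hyperbolic curvature controlled: the hemisphere of radius $r_1$ passing through $\partial\Omega$ would meet $\partial_\infty\mathbb{H}^{n+1}$ rather than sit at height $\epsilon$, so one must either translate the sphere vertically or enlarge $\rho$, and each modification perturbs $\nu^{n+1}$ and the principal curvatures; the delicate point is to show these perturbations contribute only the $O(\epsilon/r_1^2)$ error term and do not spoil the supersolution property. Second, the quantity $(\sigma-\nu^{n+1})/u$ is naturally the combination appearing in the flow speed and gradient, so the final algebraic step must carefully use $w=1/\sqrt{1-(\nu^{n+1})^2}$, $\nu^{n+1}>0$, and the geometry of the exterior ball — in particular that at $x_0$ the downward normal of $\bar u$ makes an angle with $e_{n+1}$ governed by $r_1$ — to extract exactly $\sqrt{1-\sigma^2}/r_1$ as the leading coefficient. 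The rest (existence of the exterior ball, smoothness of the barrier away from the equator, applicability of the maximum principle given $(\ref{Int5})$) is routine, as in Theorem 3.1 of \cite{GS10}, which this lemma parallels.
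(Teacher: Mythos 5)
Your general strategy — an exterior spherical barrier of constant hyperbolic curvature $\sigma$ plus a comparison/avoidance argument, as in Theorem 3.1 of \cite{GS10} — is the same route the paper takes. But the concrete barrier you set up does not have the right curvature, and the fix is not the small perturbation you anticipate.

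The crucial point is where the center of the barrier sphere sits. A Euclidean sphere centered at $(a,0)$ on the boundary plane, with any radius $\rho$, is a \emph{totally geodesic} hypersurface of $\mathbb{H}^{n+1}$: by $(\ref{Foh3})$, on such a sphere one has $\tilde\kappa_i = \pm 1/\rho$ and $\nu^{n+1} = \mp u/\rho$ (signs matched to the chosen normal), so $\kappa_i = u\tilde\kappa_i + \nu^{n+1} \equiv 0$, not $\sigma$. Your step (2) computes $\kappa_i = u/\rho + \nu^{n+1}$ but miscounts the sign of $\nu^{n+1}$: for a sphere centered on the boundary plane the two terms cancel. To obtain an equidistant sphere of constant curvature $\sigma$ you must lift the center vertically to height $R\sigma$, and this offset is comparable to the radius, not $O(\epsilon)$. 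The paper's barrier is precisely $B_1$ of radius $R_1$ centered at $a=(x_0+r_1\mathbf{e}_1,\,R_1\sigma)$, with $R_1$ determined by $R_1^2=r_1^2+(\epsilon-R_1\sigma)^2$ so that $B_1\cap\{x_{n+1}=\epsilon\}$ is the $n$-ball of radius exactly $r_1$ externally tangent to $\Gamma^\epsilon$ at $x_0$. With that choice, $\partial B_1$ (upper portion) has all hyperbolic principal curvatures equal to $\sigma$, so an avoidance lemma (Lemma 3.3 of \cite{LX10}) gives $B_1\cap\Sigma(t)=\emptyset$. Reading off the tangency at $x_0$ gives $\nu^{n+1} > -(u-\sigma R_1)/R_1$, i.e. $(\sigma-\nu^{n+1})/u < 1/R_1$ there; the algebraic inequality
\be
R_1\;\ge\;\frac{r_1^2}{r_1\sqrt{1-\sigma^2}+(1+\sigma)\epsilon}
\ee
then yields $(\ref{Gre11})$ directly. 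So the $\epsilon$-dependence enters only through the constraint that the sphere meet the height-$\epsilon$ plane in a ball of radius $r_1$ — not through a small vertical translation of a boundary-centered sphere, which is where your plan, as written, would stall. If you replace your step (1)--(2) barrier by the paper's sphere (center lifted to height $R_1\sigma$) and carry out the explicit $R_1$ estimate, the rest of your outline goes through.
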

\begin{proof}
Applying Theorem \ref{Sel0} and letting $T$ be small enough, we first assume $r_1<\infty.$ For a fixed point $x_0\in\Gamma^{\epsilon},$ let $\bf{e}_1$ be the outward unit normal vector to $\Gamma^{\epsilon}$ at $x_0.$ Let $B_1$ be the ball in $\mathbb{R}^{n+1}$ of radius $R_1$ centered at $a=(x_0+r_1\mathbf{e}_1, R_1\sigma)$ where $R_1$ satisfies $R_1^2=r_1^2+(\epsilon-R_1\sigma)^2.$

Note that $B_1\cap P(\epsilon)=\{x\in\mathbb{R}^{n+1}|x_{n+1}=\epsilon\}$ is an n-ball of radius $r_1,$ which is externally tangent to $\Gamma^{\epsilon}.$ By Lemma 3.3 of \cite{LX10}, we know that $B_1\cap\Sigma(t)={\emptyset}$ for any $t\in[0,T).$ Hence, at $x_0$ we have
\[\nu^{n+1}>-\frac{u-\sigma R_1}{R_1}.\]
By an easy computation we also know that,
\[R_1\geq\frac{r_1^2}{\sqrt{(1-\sigma^2)r_1^2}+(1+\sigma)\epsilon}.\]
Therefore (\ref{Gre11}) is proved.
In the case that $r_1=\infty,$ then in the above argument one can replace $r_1$ by any $r>0$ and let $r\rightarrow\infty.$
\end{proof}

Now consider the approximation problem
\be\label{Gre0}
\left\{
\begin{aligned}
&G(D^2u,Du,u,u_t)=\frac{1}{uw}u_{t}-F=-\sigma\;&\mbox{in $\Omega_T$},\\
&u(x,t)=\epsilon\;&\mbox{on $\partial\Omega_T$},\\
&u(x,0)=u_0+\epsilon\;&\mbox{in $\Omega\times\{0\}$}.
\end{aligned}
\right.
\ee

By Lemma \ref{Grel0} we obtain a boundary gradient estimate
\be\label{Gre1}
|Du(x,t)|\leq C\;\mbox{on $\partial\Omega_T.$}
\ee

Similar to Lemma 5.1 of \cite{LX11}, we have
\begin{lemma}\label{Grel1}
If the initial surface $\Sigma_0$ satisfies $f(\Sigma_0)\leq\sigma,$ then $f(\Sigma(t))\leq\sigma,\;\forall (x,t)\in\Omega\times(0,T).$
\end{lemma}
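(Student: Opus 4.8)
The plan is to show that the quantity $F - \sigma$ satisfies a parabolic inequality with nonpositive zeroth-order forcing (after moving terms to the correct side), so that the maximum principle forces $F \le \sigma$ for all time given that it holds at $t=0$. The starting point is the evolution equation for $F$ from Lemma \ref{Evol3}:
\[
F_t=\frac{1}{u}F^{ij}[(F-\sigma)u]_{ij}+(F-\sigma)\Bigl[\sum f_s\kappa^2_s-2\nu^{n+1}F+(\nu^{n+1})^2\sum f_s\Bigr]
+w(F-\sigma)\bigl(F-\nu^{n+1}\sum f_s\bigr)-[(F-\sigma)u]_iu^i\sum f_s.
\]
Set $\varphi = F-\sigma$. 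First I would expand the leading term $\frac1u F^{ij}[(F-\sigma)u]_{ij} = \frac1u F^{ij}(\varphi u)_{ij}$ using the product rule: $(\varphi u)_{ij} = u\,\varphi_{ij} + u_i\varphi_j + u_j\varphi_i + \varphi\, u_{ij}$. This produces a genuine second-order operator $F^{ij}\varphi_{ij}$ (elliptic since $F^{ij}>0$ on admissible solutions), plus first-order terms in $\varphi$, plus the zeroth-order term $\frac{\varphi}{u}F^{ij}u_{ij}$. Using $\tna_{ij}u = \thh_{ij}\nu^{n+1}$ (equation \eqref{Foh13}) and $F^{ij}\thh_{ij}$-type contractions, together with the relation \eqref{Foh3} $\kappa_i = u\tilde\kappa_i + \nu^{n+1}$, this zeroth-order coefficient can be rewritten entirely in terms of $\kappa_s$, $f_s$, $\nu^{n+1}$, $w$, and $F$.

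The second step is bookkeeping: collect all terms proportional to $\varphi$ (not hit by derivatives) and check that the resulting coefficient, call it $c(x,t)$, when combined with the explicit bracket terms $(F-\sigma)[\sum f_s\kappa_s^2 - 2\nu^{n+1}F + (\nu^{n+1})^2\sum f_s] + w(F-\sigma)(F-\nu^{n+1}\sum f_s)$ already present, organizes into something of the form $\varphi$ times a bounded (or at least sign-controlled) quantity. The key algebraic identity I expect to use is that $\sum f_s\kappa_s^2 - 2\nu^{n+1}\sum f_s\kappa_s + (\nu^{n+1})^2\sum f_s = \sum f_s(\kappa_s - \nu^{n+1})^2 \ge 0$, since $F = \sum f_s\kappa_s$ by Euler's relation \eqref{Foh6} for the degree-one homogeneous $F$. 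If the contribution from expanding the leading term supplies exactly the cross term needed to complete this square, then the zeroth-order forcing becomes $\varphi \cdot (\text{nonnegative or bounded})$, and we are done by the maximum principle: at an interior maximum of $\varphi$ over $\ol\Omega\times[0,t]$ where $\varphi>0$ we would have $\varphi_t\ge 0$, $\varphi_i=0$, $F^{ij}\varphi_{ij}\le 0$, contradicting the sign of the right-hand side. One also needs to note that on the parabolic boundary $\varphi \le \sigma - \sigma$... more precisely the boundary condition is handled by the initial hypothesis $f(\Sigma_0)\le\sigma$ and the fact that on $\partial\Omega\times(0,T)$ the analysis reduces to the same comparison (or one simply applies the maximum principle to $\sup_\Omega \varphi(\cdot,t)$ and checks the spatial boundary separately, where $u=\epsilon$ but the structure of the equation still gives the inequality).

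The main obstacle I anticipate is precisely the cancellation in step two: making sure the first-order terms $\frac1u F^{ij}(u_i\varphi_j + u_j\varphi_i)$ and the term $-[(F-\sigma)u]_i u^i\sum f_s = -\varphi u_i u^i \sum f_s - u\varphi_i u^i \sum f_s$ combine cleanly, and that the leftover zeroth-order pieces do not have the wrong sign. One must be careful that $F^{ij}$ is contracted against $u_i\varphi_j$ using the \emph{hyperbolic} versus \emph{Euclidean} metric consistently, since the lemma statement mixes $\tna$ and the matrix $A[\Sigma]=\{g^{ik}h_{kj}\}$; tracking the factors of $u$ relating $g_{ij}$ and $\tg_{ij}=u^2 g_{ij}$ is where errors creep in. Since the excerpt says this lemma is "similar to Lemma 5.1 of \cite{LX11}," I expect the computation to close exactly as there, with the square-completion $\sum f_s(\kappa_s-\nu^{n+1})^2$ being the crucial positive term that, multiplied by $\varphi = F-\sigma$, has the sign needed to run the maximum principle and conclude $F(\Sigma(t))\le\sigma$ for all $t\in(0,T)$.
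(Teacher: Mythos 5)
Your overall strategy---apply the parabolic maximum principle to $F-\sigma$ using the evolution equation from Lemma \ref{Evol3}, after expanding $\frac1u F^{ij}[(F-\sigma)u]_{ij}$ and repackaging into the operator $\partial_t-F^{ij}\nabla_{ij}$---is exactly what the paper does; the result of that repackaging is displayed as \eqref{Gre3}. So the first half of your plan matches the paper's route.

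Where the argument does not close is the sign. You hope the zeroth-order coefficient organizes into $\sum f_s(\kappa_s-\nu^{n+1})^2\geq 0$, and you then claim that at an interior maximum of $\varphi=F-\sigma$ with $\varphi>0$ the max conditions "contradict the sign of the right-hand side." But that is the wrong sign for such a contradiction: at a positive interior max one has $\varphi_t-F^{ij}\nabla_{ij}\varphi\geq 0$, and the right-hand side is $c\,\varphi$ with $\varphi>0$, so if $c\geq 0$ there is no contradiction at all. In fact the coefficient $c$ appearing in \eqref{Gre3} is $\sum f_s\kappa^2_s-\nu^{n+1}F+(\nu^{n+1})^2\sum f_s+wF-2\sum f_s$, which after extracting your square equals $\sum f_s(\kappa_s-\nu^{n+1})^2+\nu^{n+1}F+wF-2\sum f_s$; the leftover $\nu^{n+1}F+wF-2\sum f_s$ has no controlled sign (note $\sum f_s\geq 1$ by \eqref{Int13}), so $c$ is neither nonnegative nor nonpositive in general. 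The paper resolves this with a device you did not make explicit: it tests $\tilde F=e^{-\lambda t}(F-\sigma)$ rather than $F-\sigma$, which replaces $c$ by $c-\lambda$, and then chooses $\lambda$ larger than $\sup c$ (finite on each compact time slab since $u\in C^{2,1}$). At a positive interior maximum of $\tilde F$ the right-hand side is then strictly negative while the left is $\geq 0$---that is the contradiction. Your hedge "nonnegative or bounded" gestures at this, but as written your argument needs $c\leq 0$, which you do not have; without the exponential weight (or an equivalent Gronwall comparison on $\sup_\Omega\varphi(\cdot,t)$) the proof does not close, and the perfect-square observation, while true, is not by itself what drives the conclusion.
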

\begin{proof}
By Lemma \ref{Evol3} we have
\be\label{Gre3}
\begin{aligned}
&\frac{\partial F}{\partial t}-F^{ij}\nabla_{ij}F\\
&=(F-\sigma)\left[\sum f_s\kappa^2_s-\nu^{n+1}F+(\nu^{n+1})^2\sum f_s+wF-2\sum f_s\right].
\end{aligned}
\ee
Now consider the function $\tilde{F}=e^{-\lambda t}(F-\sigma),$
\be\label{Gre4}
\begin{aligned}
&\frac{\partial\tilde{F}}{\partial t}-F^{ij}\nabla_{ij}\tilde{F}\\
&=\tilde{F}\left[\sum f_s\kappa^2_s-\nu^{n+1}F+(\nu^{n+1})^2\sum f_s+wF-2\sum f_s-\lambda\right].
\end{aligned}
\ee
If $\tilde{F}$ achieved its positive maximum at an interior point $(x_0,t_0)\in\Omega_T,$ then at this point we would have
$$\begin{aligned}
&\frac{\partial\tilde{F}}{\partial t}-F^{ij}\nabla_{ij}\tilde{F}\\
&=\tilde{F}\left[\sum f_s\kappa^2_s-\nu^{n+1}F+(\nu^{n+1})^2\sum f_s+wF-2\sum f_s-\lambda\right]\geq 0.
\end{aligned}$$
Choosing $\lambda$ big enough leads to a contradiction.
\end{proof}
Next we shall appeal to Theorem \ref{Sel0} and show that the linearized operator $\mathfrak{L}$ satisfies the maximum principle.
Moreover, we can get a uniform $C^1$ estimate for the admissible solution.

\begin{proposition}\label{Grep0}
Let $u(x,t)\in C^{2,1}(\Omega_{T})$ be an admissible solution of equation (\ref{Gre0}).
Suppose the initial surface $\Sigma_0=\{(x,u_0(x)),\,x\in\Omega\},$ satisfies $F(\Sigma_0)\leq\sigma$
and $\frac{1}{w(u_0(x))}>\sigma.$ Moreover, suppose $\mathcal{H}_{\partial\Omega}\geq 0,$ then $G_u\geq 0,$ $F(\Sigma(t))\leq\sigma$
and $w\leq\frac{1}{\sigma}$ on $\ol{\Omega}_{T}.$
\end{proposition}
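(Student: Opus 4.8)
The plan is to run a parabolic maximum principle on $G_u$ (equivalently on the quantity $w$, since $w \le 1/\sigma$ is the geometric content), bootstrapping from the boundary estimate in Lemma \ref{Grel0} and the initial hypothesis $1/w(u_0) > \sigma$. First I would compute $G_u$ explicitly. From the operator $G = \frac{1}{uw}u_t - F + \sigma$ and the height equation $u_t = uw(F-\sigma)$, the sign of $G_u$ is tied to the sign of $\sigma - \nu^{n+1}/1$ type expression; more precisely, since $\nu^{n+1} = 1/w$ along a graph, the claim $w \le 1/\sigma$ is equivalent to $\nu^{n+1} \ge \sigma$, and I expect $G_u \ge 0$ to be algebraically equivalent (using the structure conditions (\ref{Int12})--(\ref{Int13}) and $F = \sigma$ or $F \le \sigma$ from Lemma \ref{Grel1}) to exactly this inequality $\nu^{n+1} \ge \sigma$. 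So the three conclusions $G_u \ge 0$, $F(\Sigma(t)) \le \sigma$, $w \le 1/\sigma$ should collapse to two independent facts: $F \le \sigma$ (already Lemma \ref{Grel1}, or its analogue for the $\epsilon$-problem via Lemma \ref{Grel1}) and $\nu^{n+1} \ge \sigma$.

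The heart of the matter is therefore the estimate $\nu^{n+1} \ge \sigma$ on $\ol\Omega_T$. I would derive the evolution equation for $\nu^{n+1}$ (or for $\varphi := \sigma - \nu^{n+1}$, which I want to show stays $\le 0$) by combining Lemma \ref{Evol1}, equation (\ref{Evo10}), with the hyperbolic connection formula (\ref{Foh10}) to convert Euclidean derivatives of $\nu^{n+1}$ into the linearized operator $\mathfrak{L} = \partial_t - \frac{1}{u}F^{ij}\na_{ij}$ applied to $\nu^{n+1}$. Using the Gauss--Weingarten relations and (\ref{Foh3}) I expect to get an equation of the schematic form
\be
\mathfrak{L}(\sigma - \nu^{n+1}) = -\Big(\tfrac{1}{u}\sum f_s \kappa_s^2 + \text{lower order}\Big)\nu^{n+1} + (\text{terms}) (\sigma - \nu^{n+1}) + (\sigma - \nu^{n+1})(\cdots),
\ee
where, crucially, the zeroth-order coefficient multiplying $(\sigma - \nu^{n+1})$ has a sign that the maximum principle can handle, and the "bad" leftover term involving $F-\sigma$ is nonpositive because $F \le \sigma$. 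I would then argue: if $\sigma - \nu^{n+1}$ attains a positive interior maximum at $(x_0,t_0)$, the spatial Hessian term is $\le 0$ and the time derivative is $\ge 0$, producing a contradiction with the evolution equation once the remaining terms are controlled (possibly after the usual trick of passing to $e^{-\lambda t}(\sigma - \nu^{n+1})$ to absorb a bounded zeroth-order coefficient, exactly as in the proof of Lemma \ref{Grel1}).

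That leaves the boundary and initial data. On $\Omega \times \{0\}$ the hypothesis $1/w(u_0) > \sigma$ is precisely $\nu^{n+1}(u_0) > \sigma$, so $\sigma - \nu^{n+1} < 0$ initially. On $\partial\Omega \times [0,T)$ I need $\sigma - \nu^{n+1} \le 0$, i.e. $\frac{\sigma - \nu^{n+1}}{u} \le 0$; here Lemma \ref{Grel0} gives $\frac{\sigma - \nu^{n+1}}{u} < \frac{\sqrt{1-\sigma^2}}{r_1} + \frac{\epsilon(1+\sigma)}{r_1^2}$, which is positive, so it does not immediately suffice. This is where the mean-convexity assumption $\mathcal{H}_{\partial\Omega} \ge 0$ must enter: I would refine the barrier argument of Lemma \ref{Grel0} so that when $\partial\Omega$ is mean convex one can send the radius of the exterior tangent sphere to infinity (or use a flat half-space barrier $\{x_{n+1} = \epsilon\}$ tilted appropriately), yielding $\nu^{n+1} \ge \sigma$ up to an error that vanishes as $\epsilon \to 0$, hence genuinely $\nu^{n+1} \ge \sigma$ on the boundary for $\epsilon$ small. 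Combining the interior maximum principle with the boundary and initial bounds gives $\nu^{n+1} \ge \sigma$ on all of $\ol\Omega_T$, hence $w \le 1/\sigma$, and then $G_u \ge 0$ and $F \le \sigma$ follow from the algebraic identities. I expect the main obstacle to be precisely the boundary step: getting the sharp constant $\sigma$ (rather than $\sigma$ plus an $O(\epsilon)$ or $O(1/r_1)$ defect) out of the barrier comparison, which is exactly the point at which $\mathcal{H}_{\partial\Omega} \ge 0$ is indispensable and cannot be dropped.
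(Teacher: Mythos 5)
Your overall strategy (reduce everything to $\nu^{n+1}\ge\sigma$, control the interior by a parabolic maximum principle, handle $\partial\Omega\times\{0\}$ by the hypothesis on $u_0$, and handle $\partial\Omega\times(0,T)$ by mean convexity) is morally right, and you correctly identify the three conclusions as collapsing to $F\le\sigma$ plus $\nu^{n+1}\ge\sigma$ via the algebraic identity $G_u=\frac{1}{u}(-2F+\sigma+\frac{1}{w}\sum f_i)$ together with (\ref{Int13}). But the mechanism differs from the paper in both the interior and, more seriously, at the boundary.

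For the interior, the paper does not compute an evolution equation for $\nu^{n+1}$ at all. Instead it differentiates $G(D^2u,Du,u,u_t)=-\sigma$ in $x_k$ to get $\mathfrak{L}u_k=0$, and then runs a continuity/bootstrap argument: $G_u|_{t=0}>0$ by hypothesis, so the maximum principle holds for $\mathfrak{L}$ for $t$ small, which forces $w$ to attain its maximum on the parabolic boundary; once $1/w\ge\sigma$ and $F\le\sigma$ are confirmed there, $G_u\ge 0$ persists and one can push $t^*$ up to $T$. Your route through the evolution of $\nu^{n+1}$ (in the spirit of Lemma \ref{C2glm1}) would probably also work but is more computation-heavy, and you would still need the same bootstrap to get $G_u\ge 0$ so that the zeroth-order term does not wreck the maximum principle; this bootstrap structure is absent from your sketch.

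The genuine gap is the lateral boundary step, and your diagnosis of why it is hard is correct but the proposed fix would not go through. You suggest refining the barrier of Lemma \ref{Grel0} and sending the exterior-sphere radius $r_1\to\infty$ using mean convexity; but mean convexity of $\partial\Omega$ does not allow exterior tangent spheres of arbitrarily large radius (that would require convexity), so the barrier bound stays strictly worse than $\sigma$. The paper's boundary argument is not a barrier argument. Since $u\equiv\epsilon$ on $\partial\Omega\times[0,T)$, one has $u_t=0$ there, so the flow equation forces $F=\sigma$ pointwise on the lateral boundary; moreover, differentiating the boundary constraint twice tangentially gives $\sum_{\alpha<n}u_{\alpha\alpha}=-u_n(n-1)\mathcal{H}_{\partial\Omega}\le 0$, and at a boundary maximum of $w$ one also has $u_{nn}\le 0$ in the inner normal coordinate. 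Feeding these into (\ref{Int12}) (hyperbolic mean curvature dominates $F$) yields directly
\[
\frac{n}{\epsilon}\Bigl(\sigma-\frac{1}{w}\Bigr)\ \le\ \frac{1}{w}\Bigl(\sum_{\alpha<n}u_{\alpha\alpha}+\frac{u_{nn}}{w^2}\Bigr)\ \le\ -(n-1)\frac{u_n}{w}\mathcal{H}_{\partial\Omega}\ \le\ 0,
\]
giving $1/w\ge\sigma$ on $\partial\Omega\times(0,t^*)$ with no $\epsilon$- or $r_1$-dependent defect. This ``boundary-point'' use of the equation plus mean convexity is precisely what your proposal is missing, and it is the only place where $\mathcal{H}_{\partial\Omega}\ge 0$ enters.
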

\begin{proof}
From
\be\label{Gre2}
\begin{aligned}
G_u&=\frac{\partial G}{\partial u}=\frac{1}{u}(-2F+\sigma+\frac{1}{w}\sum f_i)\\
&\geq\frac{1}{u}(-2F+\sigma+\frac{1}{w}),
\end{aligned}
\ee
and the hypotheses on $\Sigma_0$ we can see that $G_u|_{t=0}>0.$ Thus when $t$ close to $0$, the linearized operator $\mathfrak{L}$
 satisfies the maximum principle. But  $\mathfrak{L}u_k=0$ so each derivative $u_k$ achieves its maximum on $\partial\Omega_{t^*},$ where $0<t^*<T$ sufficiently small. In particular, $w$ obtains its maximum on $\partial\Omega_{t^*}.$ By assumption we know that $w(u_0)<\frac{1}{\sigma},$ so we only need to assume that $w$ achieves its maximum on $\partial\Omega\times(0,t^*).$

Let $(0,t_0)\in\partial\Omega\times(0,t^*)$ be the point where $w$ assumes its maximum. Choose coordinates $(x_1,\cdots, x_n)$ at $0$ with $x_n$ the inner normal direction for $\partial\Omega.$ Then at $(0,t_0),$ we have
$$u_\alpha=0,\;1\leq\alpha<n,\;u_n>0,\;u_{nn}\leq 0,$$
$$\sum u_{\alpha\alpha}=-u_n(n-1)\mathcal{H}_{\partial\Omega}\leq 0.$$
Moreover, by equation (\ref{Int12}), the hyperbolic mean curvature of graph$(u)\geq F.$ Therefore by implying Theorem \ref{Sel0} we have
$$\frac{n}{\epsilon}(\sigma-\frac{1}{w})
\leq\frac{1}{w}\left(\sum_{\alpha<n}u_{\alpha\alpha}+\frac{u_{nn}}{w^2}\right)\leq-(n-1)\frac{u_n}{w}\mathcal{H}_{\partial\Omega}\leq 0.$$
Hence $\frac{1}{w}\geq\sigma$ on $\partial\Omega\times(0, t^*).$

Applying Lemma \ref{Grel1} we know $F\leq\sigma$ for all $t\in[0,T).$ Thus $G_u\geq 0$ so $\mathcal{L}$ satisfies the maximum principle. Consequently, the estimates must continue to hold as we increase $t^*$ up to $T.$
\end{proof}

\section{$C^2$ boundary estimates} \label{C2b}
In this section, we establish boundary estimates for second spatial derivatives of the admissible solutions to the Dirichlet problem (\ref{Int3}). According to (\ref{Foh15}) we can rewrite equation (\ref{Int3}) as follows:
\be\label{C2b0}
\left\{
\begin{aligned}
&\frac{1}{uw}u_t-F\lll(\frac{1}{w}(\delta_{ij}+u\gamma^{is}u_{sr}\gamma^{rj})\rrr)=-\sigma\;&\mbox{in $\Omega_T$},\\
&u(x,t)=\epsilon\;&\mbox{on $\partial\Omega\times[0,T)$},\\
&u(x,0)=u_0+\epsilon\;&\mbox{in $\Omega\times\{0\}$}.
\end{aligned}
\right.
\ee
As before we denote
\be\label{C2b1}
G(D^2u,Du,u,u_t)=\frac{1}{uw}u_t-F.
\ee

\begin{theorem}\label{C2bt0}
Suppose $f$ satisfies equation(\ref{Int5})-(\ref{Int11}). If $\epsilon$ is sufficiently small,
\be\label{C2b22}
u|D^2u|\leq C\;\mbox{on $\partial\Omega\times[0,T)$},
\ee
where $C$ is independent of $\epsilon$ and $t.$
\end{theorem}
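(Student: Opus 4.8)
The plan is to estimate the mixed tangential--normal and pure normal second derivatives on $\partial\Omega$ by constructing suitable barriers, following the standard scheme for fully nonlinear elliptic/parabolic Dirichlet problems (cf.\ \cite{CNS86}, \cite{GS08}) but keeping careful track of the degenerate weight $u\approx\epsilon$ near the boundary. Because the already-established global gradient bound (Proposition \ref{Grep0}) gives $w\le 1/\sigma$, all coefficients $F^{ij}$, $\gamma^{ij}$, etc., are under control; the only genuine smallness is the factor $u$, which is why the estimate takes the form $u|D^2u|\le C$ rather than $|D^2u|\le C$. I will work at a fixed boundary point, say $0\in\partial\Omega$, choose coordinates with $x_n$ the inner normal direction, and use the flattened boundary together with a standard distance-function $d(x)=\dist(x,\partial\Omega)$.

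First I would treat the tangential--tangential derivatives $u_{\alpha\beta}$, $\alpha,\beta<n$: since $u=\epsilon$ is constant on $\partial\Omega$, differentiating the boundary condition twice along $\partial\Omega$ expresses these in terms of $u_n$ and the second fundamental form of $\partial\Omega$, so they are already bounded (this is where $\mathcal{H}_{\partial\Omega}\ge 0$ and the boundary gradient bound \eqref{Gre1} enter, exactly as in the proof of Proposition \ref{Grep0}). Next, for the mixed derivatives $u_{\alpha n}$ I would differentiate the equation $G=-\sigma$ tangentially to get $\mathfrak{L}(u_\alpha)=$ (bounded), apply the linearized operator to a barrier of the form
\[
\Psi = A\,d + B\,d^2 - \sum_{\beta<n}(x_\beta)^2 + \text{(correction terms in }|x|^2\text{)},
\]
and conclude by the comparison principle on a half-ball $\Omega\cap B_\rho$, with $\rho$ small but independent of $\epsilon$. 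The ellipticity constants of $\mathfrak{L}$ degenerate like $u$, which forces the constants $A,B$ to be chosen of order $1/\epsilon$, and this is precisely what produces the bound $u|u_{\alpha n}|\le C$ with $C$ independent of $\epsilon$. One must verify that the lower-order terms of $G$ coming from the explicit $u$-dependence (those appearing in \eqref{Gre2}) do not destroy the sign of $\mathfrak{L}\Psi$; since they are controlled by $1/u\cdot(\text{bounded})$ and $d\lesssim u\lesssim d+\epsilon$, this is manageable.

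The pure normal second derivative $u_{nn}$ is the main obstacle. The standard trick is to show that the eigenvalue $\kappa[u]$ in the ``$nn$'' direction stays inside $K$ at a uniform distance from $\partial K$, equivalently that $F(A[u])$ cannot blow up — but here the usual argument is complicated by the degeneracy and by the fact that we are only assuming the weak structure conditions \eqref{Int5}--\eqref{Int11} rather than a definite lower bound on $\sum f_i$. I expect to argue as follows: suppose $u_{nn}(0,t)$ were very large (negatively, since $u$ is a subsolution-type graph); then, using \eqref{Foh15}, one of the $a_{ij}$ becomes large and, by the concavity \eqref{Int6} together with the asymptotic condition \eqref{Int11}, $F(A[u])$ would be bounded below by $1+\epsilon_0/2 > \sigma$ once $\epsilon$ is small — contradicting $F\le\sigma$ from Proposition \ref{Grep0}. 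Making this rigorous requires a careful eigenvalue perturbation argument: one writes the tangential block of $A[u]$ at $0$, which is already bounded by the previous two steps, and shows that a large $a_{nn}$ entry forces the top eigenvalue to be comparably large while the others stay bounded, so that condition \eqref{Int11} (with the ball $B_{\delta_0}(\mathbf 1)$ after normalizing by the bounded eigenvalues) applies. The technical heart is controlling how the off-diagonal corrections in \eqref{Foh15} interact with $\gamma^{ik}$ when $|Du|$ is of order one and $u$ of order $\epsilon$; I anticipate this is where the bulk of the estimate's work — and the reason the threshold $\sigma>\sigma_0$ ultimately appears in the global theorem, though not in this boundary statement — will lie.
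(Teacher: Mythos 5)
Your sketches of the tangential--tangential and pure normal steps are essentially the paper's: the first follows from differentiating $u\equiv\epsilon$ along $\partial\Omega$, and for $u_{nn}$ the paper too uses the eigenvalue perturbation lemma of \cite{CNS85} together with \eqref{Int11} and $F\le\sigma$ from Proposition \ref{Grep0} to rule out $\epsilon u_{nn}$ large. (One small correction: the threshold $\sigma>\sigma_0$ in Theorem \ref{Intt0} comes entirely from the interior $C^2$ estimate in Section \ref{c2g}, not from the boundary argument; the boundary estimate holds for every $\sigma\in(0,1)$.)

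The genuine gap is in the mixed derivative step. Your proposed barrier $\Psi=A\,d+B\,d^2-\sum_{\beta<n}x_\beta^2+\cdots$ is the classical distance-function barrier, and it does not close here. Two things go wrong. First, $\mathcal{L}$ has no uniform lower ellipticity: $G^{sr}\sim -\,u\,F^{ij}$ and $\sum F^{ii}$ is not bounded away from zero under \eqref{Int5}--\eqref{Int11}, so $\mathcal{L}(d^2)$ can fail to have a sign unless it is weighted against a quantity that itself produces a $\sum f_i$ factor. That is exactly the role of the supersolution $1-\epsilon/u$ in the paper: Lemma \ref{C2bl0} and Corollary \ref{C2bc0} give $L(1-\epsilon/u)\ge\frac{\epsilon(1-\sigma)}{u^2 w}\sum f_i$, a lower bound proportional to $\sum f_i$, which is what dominates the bad terms $\frac{C}{\epsilon}(\sum f_i+\sum f_i|\kappa_i|)$ coming from $\mathcal{L}(|x|^2/\epsilon^2)$. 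Second, the barrier $\phi$ must include the quadratic term $\frac12\sum_{l<n}u_l^2$, whose contribution $\sum_{l<n}G^{sr}u_{ls}u_{lr}$ cannot be handled by comparison alone; the paper controls it by the Ivochkina--Lin--Trudinger cofactor argument (Proposition \ref{C2bp0}), which converts it into $-c_0u\sum_{i\neq r}f_i\tilde\kappa_i^2$. You then still need the exponential wrapper $e^{C_2\phi}-1$ to absorb the first-order terms through $-G^{ij}\phi_i\phi_j$, and the modified operator $L=\mathcal{L}+\frac{2}{w^2}F^{ij}a^-_{ik}u_k\partial_j$ (with the $A^-$ projection) to handle the sign-indefinite curvature contributions in $G^s$. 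None of these four ingredients appears in your proposal, and without them the comparison principle does not give the bound $|u_{\alpha n}|\le C/\epsilon$. Also note the half-ball should be $\Omega\cap B_\epsilon(0)$, of radius $\epsilon$ (not a radius independent of $\epsilon$), since it is on that scale that $|x|^2/\epsilon^2$ stays bounded and the boundary comparison \eqref{C2b31} can be arranged.
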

Before we begin our proof note that
\be\label{C2b2}
G^{sr}:=\frac{\partial G}{\partial u_{sr}}=-\frac{u}{w}F^{ij}\gamma^{ir}\gamma^{sj},
\ee
\be\label{C2b3}
G^{sr}u_{sr}=-F+\frac{1}{w}\sum F^{ii},
\ee
\be\label{C2b4}
\begin{aligned}
&G_u:=\frac{\partial G}{\partial u}=-\frac{1}{wu^2}u_t-\frac{1}{w}F^{ij}\gamma^{ik}u_{kl}\gamma^{lj}\\
&=-\frac{\F}{u}-F^{ij}\lll(\frac{a_{ij}}{u}-\frac{1}{uw}\delta_{ij}\rrr)\\
&=-\frac{2F}{u}+\frac{\sigma}{u}+\frac{1}{wu}\sum F^{ii},
\end{aligned}
\ee
\be\label{C2b5}
G^t:=\frac{\partial G}{\partial u_t}=\frac{1}{uw},
\ee
\be\label{C2b6}
\begin{aligned}
&G^s:=\frac{\partial G}{\partial u_s}\\
&=-\frac{u_tu_s}{uw^3}+\frac{u_s}{w^2}F+\frac{2}{w}F^{ij}a_{ik}\lll(\frac{wu_k\gamma^{sj}+u_j\gamma^{ks}}{1+w}\rrr)
-\frac{2}{w^2}F^{ij}u_i\gamma^{sj}\\
&=-\frac{\F}{w^2}u_s+\frac{u_s}{w^2}F+\frac{2}{w}F^{ij}a_{ik}\lll(\frac{wu_k\gamma^{sj}+u_j\gamma^{ks}}{1+w}\rrr)
-\frac{2}{w^2}F^{ij}u_i\gamma^{sj}\\
&=\frac{u_s}{w^2}\sigma+\frac{2}{w}F^{ij}a_{ik}\lll(\frac{wu_k\gamma^{sj}+u_j\gamma^{ks}}{1+w}\rrr)
-\frac{2}{w^2}F^{ij}u_i\gamma^{sj}.\\
\end{aligned}
\ee
Thus
\be\label{C2b7}
G^su_s=\frac{w^2-1}{w^2}\sigma+\frac{2}{w^2}F^{ij}a_{ik}u_ku_j-\frac{2}{w^3}F^{ij}u_iu_j
\ee
and
\be\label{C2b24}
\sum |G^s|\leq \frac{\sigma}{w}+\frac{2}{w}\lll(\sum F^{ii}+\sum f_i|\kappa_i|\rrr).
\ee

Now let $\mathcal{L}'$ denote the partial linearized operator of $G$ at $u$:
\[\mathcal{L}'=\mathcal{L}-G_u=G^t\partial_t+G^{sr}\partial_s\partial_r+G^s\partial_s.\]
By equation (\ref{C2b3}), (\ref{C2b5}) and (\ref{C2b7}) we get
\be\label{C2b8}
\begin{aligned}
\mathcal{L}'u&=G^tu_t+G^{sr}u_{sr}+G^su_s\\
&=\frac{1}{uw}uw(F-\sigma)-F+\frac{1}{w}\sum F^{ii}+\left(1-\frac{1}{w^2}\right)\sigma\frac{2}{w^2}F^{ij}a_{ik}u_ku_j-\frac{2}{w^3}F^{ij}u_iu_j\\
&=-\frac{1}{w^2}\sigma+\frac{1}{w}\sum F^{ii}+\frac{2}{w^2}F^{ij}a_{ik}u_ku_j-\frac{2}{w^3}F^{ij}u_iu_j,
\end{aligned}
\ee
hence
\be\label{C2b9}
\begin{aligned}
\mathcal{L}\frac{1}{u}&=G^t\lll(-\frac{u_t}{u^2}\rrr)+G^{sr}\partial_s\lll(-\frac{u_r}{u^2}\rrr)+G^s\lll(-\frac{u_s}{u^2}\rrr)+G_u\lll(\frac{1}{u}\rrr)\\
&=-\frac{1}{u^2}\lll(G^tu_t+G^{sr}u_{sr}+G^su_s\rrr)+2G^{sr}\frac{u_ru_s}{u^3}+G_u\frac{1}{u}\\
&=-\frac{1}{u^2}\lll(-\frac{1}{w^2}\sigma+\frac{1}{w}\sum F^{ii}+\frac{2}{w^2}F^{ij}a_{ik}u_ku_j-\frac{2}{w^3}F^{ij}u_iu_j\rrr)\\
&+2G^{sr}\frac{u_ru_s}{u^3}+\frac{1}{u}\lll(-2\frac{F}{u}+\frac{\sigma}{u}+\frac{1}{uw}\sum F^{ii}\rrr)\\
&=\frac{1+w^2}{u^2w^2}\sigma-2\frac{F}{u^2}-\frac{2}{u^2w^2}F^{ij}a_{ik}u_ku_j.
\end{aligned}
\ee

\begin{lemma}\label{C2bl0}
Suppose that $f$ satisfies (\ref{Int5}), (\ref{Int6}), (\ref{Int9}) and (\ref{Int10}). Then
\be\label{C2b10}
\mathcal{L}\lll(1-\frac{\epsilon}{u}\rrr)\geq\frac{\epsilon}{u^2w}(1-\sigma)\sum f_i+\frac{2\epsilon}{u^2w^2}F^{ij}a_{ik}u_ku_j\;\mbox{in $\Omega_T.$}
\ee
\end{lemma}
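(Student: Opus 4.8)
The plan is to compute $\mathcal{L}(1-\epsilon/u) = -\epsilon\,\mathcal{L}(1/u) + \mathcal{L}(1)$ using the identity for $\mathcal{L}(1/u)$ already derived in \eqref{C2b9}, and the fact that $\mathcal{L}(1) = G_u\cdot 1 = G_u$ since the constant function $1$ has vanishing time and space derivatives. Plugging these in gives
\[
\mathcal{L}\lll(1-\frac{\epsilon}{u}\rrr) = -\epsilon\lll(\frac{1+w^2}{u^2w^2}\sigma - \frac{2F}{u^2} - \frac{2}{u^2w^2}F^{ij}a_{ik}u_ku_j\rrr) + \lll(-\frac{2F}{u}+\frac{\sigma}{u}+\frac{1}{uw}\sum F^{ii}\rrr),
\]
where I use \eqref{C2b4} for $G_u$. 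The first step is therefore purely algebraic: collect the terms, isolating the favorable term $\frac{2\epsilon}{u^2w^2}F^{ij}a_{ik}u_ku_j$, which appears with the right sign, and showing what remains dominates $\frac{\epsilon}{u^2w}(1-\sigma)\sum f_i$.

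The key estimates needed to bound the remaining terms from below are the homogeneity/concavity consequences recorded in the introduction: since $A[u]=\{a_{ij}\}$ has eigenvalues $\kappa[u]$, we have $F^{ij}a_{ij}=F$ by \eqref{Foh6}, and $\sum F^{ii} = \sum f_i \geq 1$ by \eqref{Int13}. Also $F = f(\kappa) \leq \frac1n\sum\kappa_i$ is not directly needed, but $F>0$ is. I expect the argument to hinge on writing $\frac{1}{uw}\sum F^{ii} = \frac{1}{u^2w}\cdot u\sum f_i$ and comparing the $\epsilon$-order terms against the $O(1)$ terms; more precisely, after multiplying through by $u^2$, one wants
\[
\frac{\sigma}{w} - \epsilon\cdot\frac{1+w^2}{w^2}\sigma + 2\epsilon F - 2uF + \frac{u}{w}\sum f_i \;\geq\; \frac{\epsilon(1-\sigma)}{w}\sum f_i
\]
modulo the $F^{ij}a_{ik}u_ku_j$ terms, which should follow because $u\geq\epsilon$ on $\Omega_T$ (from the boundary condition $u=\epsilon$ and the parabolic comparison principle, together with $F-\sigma<0$ keeping $u$ from dropping) makes $\frac{u}{w}\sum f_i$ the dominant term, overwhelming the $\epsilon$-order right-hand side, while the terms $-2uF$ and $2\epsilon F$ are controlled since $F\leq\sigma$ by Lemma \ref{Grel1}/Proposition \ref{Grep0} and $w\leq 1/\sigma$.

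The main obstacle I anticipate is not any single inequality but the bookkeeping: ensuring that the $F^{ij}a_{ik}u_ku_j$ cross-terms from $-\epsilon\,\mathcal{L}(1/u)$ combine to exactly the stated $+\frac{2\epsilon}{u^2w^2}F^{ij}a_{ik}u_ku_j$ (with no leftover $\frac{2}{w^3}F^{ij}u_iu_j$-type term surviving — this requires tracking that \eqref{C2b9} already absorbed such terms), and that every discarded term has a provably nonnegative or suitably small contribution. A secondary subtlety is the direction of the inequality: since we only assumed \eqref{Int5}, \eqref{Int6}, \eqref{Int9}, \eqref{Int10} here (not the full list), the only structural inputs are positivity of $F$ and of the $f_i$, the trace identities \eqref{Foh6}--\eqref{Int13}, and the already-established $C^0$/$C^1$ bounds $u\geq\epsilon$, $F\leq\sigma$, $w\leq1/\sigma$; so the proof must be arranged to use \emph{only} these. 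Once the algebra is organized, the lemma should follow by a short chain of elementary inequalities.
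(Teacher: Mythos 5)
Your decomposition is valid and, once the bookkeeping is carried out, does prove the lemma, but it is organized differently from the paper's and requires a noticeably longer chain of inequalities. The paper first splits $\mathcal{L}=\mathcal{L}'+G_u$, then uses Proposition~\ref{Grep0} (which gives $G_u\geq 0$) together with $u\geq\epsilon$ to discard the nonnegative term $G_u\lll(1-\epsilon/u\rrr)$ outright, reducing to the clean quantity $-\epsilon\mathcal{L}'(1/u)=-\epsilon(\mathcal{L}-G_u)(1/u)$. When one expands this using \eqref{C2b9} and \eqref{C2b4}, the $F$-terms and the $\sigma/u^2$-terms cancel, leaving exactly
\[
\frac{\epsilon}{u^2w}\sum f_i-\frac{\epsilon\sigma}{u^2w^2}+\frac{2\epsilon}{u^2w^2}F^{ij}a_{ik}u_ku_j,
\]
and the lemma then follows from the one-line observation $w\sum f_i\geq 1$ (since $w\geq 1$ and $\sum f_i\geq 1$ by \eqref{Int13}). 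Your route instead keeps $G_u$ whole via $\mathcal{L}(1-\epsilon/u)=G_u-\epsilon\mathcal{L}(1/u)$; the two expressions are algebraically identical since their difference $G_u\lll(1-\epsilon/u\rrr)$ is precisely what the paper drops, but because you do not discard that piece up front, the residual inequality after cancelling the $F^{ij}a_{ik}u_ku_j$ terms involves $O(u)$ pieces ($\sigma u$, $-2uF$, $\frac{u}{w}\sum f_i$) that must be grouped carefully. The order of the estimates matters: using $F\leq\sigma$ to get $\sigma u-2(u-\epsilon)F\geq-\sigma u+2\sigma\epsilon$, then $\sum f_i\geq 1$ and $u-\epsilon(1-\sigma)\geq 0$ to isolate $u(1/w-\sigma)\geq\epsilon(1/w-\sigma)$, and finally $w\geq 1$ at the last stage; if one instead applies $1/w\geq\sigma$ too aggressively to the $\sum f_i$ term, the bound degenerates and appears to fail near $w\approx 1$. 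So your plan works, but the paper's trick of peeling off $G_u(1-\epsilon/u)\geq 0$ first is what makes the verification one line instead of half a page. Two small errors to flag: the display in your proposal has $\frac{\sigma}{w}$ where it should read $\sigma u$ (coming from $u^2\cdot\frac{\sigma}{u}$); and the parenthetical claim that ``$F-\sigma<0$ [keeps] $u$ from dropping'' is backwards --- $F-\sigma<0$ makes $u_t<0$, so $u$ \emph{decreases}; the bound $u\geq\epsilon$ comes instead from comparison with the horosphere $u\equiv\epsilon$, on which $f(\kappa)=1>\sigma$.
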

\begin{proof}
By equation (\ref{C2b9}) and Proposition \ref{Grep0}
\be\label{C2b11}
\begin{aligned}
&\mathcal{L}\lll(1-\frac{\epsilon}{u}\rrr)=\mathcal{L}'\lll(1-\frac{\epsilon}{u}\rrr)+G_u\lll(1-\frac{\epsilon}{u}\rrr)\\
&\geq\mathcal{L}'\lll(1-\frac{\epsilon}{u}\rrr)=-\epsilon\mathcal{L}'\lll(\frac{1}{u}\rrr)=-\epsilon\lll(\mathcal{L}-G_u\rrr)\frac{1}{u}\\
&\geq\frac{\epsilon}{u^2w}(1-\sigma)\sum f_i+\frac{2\epsilon}{u^2w^2}F^{ij}a_{ik}u_ku_j.
\end{aligned}
\ee
\end{proof}

Recall that for symmetric matrix $A=A[u]$ we can uniquely define the symmetric matrices
$$|A|=\{AA^T\}^{\frac{1}{2}},\;A^+=\frac{1}{2}(|A|+A),\;A^-=\frac{1}{2}(|A|-A)$$
which all commute and satisfy $A^+A^-=0.$ Moreover, $F=F^{ij}$ commutes with $|A|,$ $A^{\pm}$ so all simultaneously diagonalizable.
Write $A^{\pm}=\{a_{ij}^{\pm}\}$ and define
\be\label{C2b21}
L=\mathcal{L}+\frac{2}{w^2}F^{ij}a_{ik}^-u_k\partial_j.
\ee

\begin{corollary}\label{C2bc0}
Suppose that $f$ satisfies (\ref{Int5}), (\ref{Int6}), (\ref{Int9}) and (\ref{Int10}). Then
\be\label{C2b12}
L\lll(1-\frac{\epsilon}{u}\rrr)\geq\frac{\epsilon(1-\sigma)}{u^2w}\sum f_i.
\ee
\end{corollary}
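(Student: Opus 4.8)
The plan is to combine Lemma \ref{C2bl0} with the specific structure of the operator $L$, which differs from $\mathcal{L}$ only by the first-order term $\frac{2}{w^2}F^{ij}a^-_{ik}u_k\partial_j$. Since $1-\frac{\epsilon}{u}$ depends on $u$ in a simple radial way, applying this extra term to it produces $\frac{2}{w^2}F^{ij}a^-_{ik}u_k\cdot\partial_j\!\left(1-\frac{\epsilon}{u}\right)=\frac{2\epsilon}{u^2 w^2}F^{ij}a^-_{ik}u_k u_j$. Hence
\[
L\lll(1-\frac{\epsilon}{u}\rrr)=\mathcal{L}\lll(1-\frac{\epsilon}{u}\rrr)+\frac{2\epsilon}{u^2 w^2}F^{ij}a^-_{ik}u_k u_j .
\]

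Next I would invoke the bound from Lemma \ref{C2bl0}, namely $\mathcal{L}\lll(1-\frac{\epsilon}{u}\rrr)\geq \frac{\epsilon}{u^2 w}(1-\sigma)\sum f_i+\frac{2\epsilon}{u^2 w^2}F^{ij}a_{ik}u_k u_j$, and substitute. This gives
\[
L\lll(1-\frac{\epsilon}{u}\rrr)\geq \frac{\epsilon(1-\sigma)}{u^2 w}\sum f_i+\frac{2\epsilon}{u^2 w^2}F^{ij}(a_{ik}+a^-_{ik})u_k u_j .
\]
Now I use the algebra of $A$, $A^+$, $A^-$: since $A=A^+-A^-$, we have $a_{ik}+a^-_{ik}=a^+_{ik}$, so the last term is $\frac{2\epsilon}{u^2 w^2}F^{ij}a^+_{ik}u_k u_j$.

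The final step is to show this remaining term is nonnegative, so it can simply be dropped to yield \eqref{C2b12}. Because $F^{ij}=F^{ij}(A[\Sigma])$ commutes with $A^+$ and both are simultaneously diagonalizable, in a common eigenbasis $F^{ij}a^+_{ik}$ is diagonal with entries $f_i(a^+_{ii})$, where each $f_i>0$ by \eqref{Int5} and each eigenvalue of $A^+$ is $\geq 0$ by construction; therefore the quadratic form $F^{ij}a^+_{ik}u_k u_j=\sum_i f_i a^+_{ii} (u_i)^2\geq 0$ applied to the vector with components $u_k$. Discarding it gives exactly the claimed inequality.

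I do not anticipate a genuine obstacle here — the corollary is essentially a bookkeeping consequence of Lemma \ref{C2bl0} together with the relations $A^+A^-=0$, $A=A^+-A^-$, and the commutativity/positivity of $F$ and $A^+$. The only point requiring a little care is the sign of the mixed quadratic form $F^{ij}a^+_{ik}u_ku_j$; this is handled cleanly by passing to the simultaneous eigenbasis of $F$ and $A^{\pm}$ noted in the paragraph preceding \eqref{C2b21}, where positivity is transparent.
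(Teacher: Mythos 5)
Your proof is correct and is exactly what the paper's setup is driving at: the operator $L$ and the decomposition $A = A^+ - A^-$ are introduced precisely so that the extra first-order term converts $a_{ik}$ into $a^+_{ik}$, and the commutativity/simultaneous-diagonalizability remark in the paragraph before \eqref{C2b21} is there to justify dropping the resulting nonnegative quadratic form $F^{ij}a^+_{ik}u_ku_j$. The paper states the corollary without a written proof, but your bookkeeping is the intended one and it is complete.
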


Finally we need to point out that, similar to \cite{CNS84} we can prove

\begin{lemma}\label{C2bl2}
Suppose that $f$ satisfies (\ref{Int5}), (\ref{Int6}), (\ref{Int9}) and (\ref{Int10}). Then
\be\label{C2b25}
\mathcal{L}(x_iu_j-x_ju_i)=0,\;\mathcal{L}u_i=0,\;\;1\leq i,j\leq n.
\ee
\end{lemma}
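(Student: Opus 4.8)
The plan is to prove Lemma \ref{C2bl2} by differentiating the equation $G(D^2u, Du, u, u_t) = -\sigma$ in the coordinate direction $x_i$ and recognizing the result as $\mathcal{L}u_i = 0$. Writing $G = G(D^2u, Du, u, u_t)$ as a function of its arguments, the chain rule gives
\[
\partial_i\bigl(G(D^2u, Du, u, u_t)\bigr) = G^{sr}\partial_s\partial_r u_i + G^s\partial_s u_i + G_u\, u_i + G^t\,\partial_t u_i = \mathcal{L}u_i,
\]
since $G$ has no explicit dependence on the coordinate $x_i$ (the operator is translation-invariant in the $x$-variables; $\gamma^{ij}$, $w$, etc. are built only from $Du$, $D^2u$, $u$). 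Because the right-hand side $-\sigma$ is constant, $\partial_i(-\sigma) = 0$, and hence $\mathcal{L}u_i = 0$ for each $1 \le i \le n$. This is the content of the second identity in (\ref{C2b25}).

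For the first identity, I would exploit the rotational invariance of the operator $G$. The hyperbolic principal curvatures $\kappa[u]$, and therefore $F(A[u])$, depend only on the intrinsic geometry of the graph of $u$ together with the fixed vertical direction $e_{n+1}$; consequently $F$ is invariant under the one-parameter group of rotations of $\mathbb{R}^n$ in the $x_ix_j$-plane (these extend to isometries of $\mathbb{H}^{n+1}$ fixing $e_{n+1}$). The infinitesimal generator of this rotation, acting on functions on $\Omega$, is the vector field $x_i\partial_j - x_j\partial_i$; applying it to the relation $G(D^2u, Du, u, u_t) + \sigma = 0$ and using the invariance of $G$ yields $\mathcal{L}(x_iu_j - x_ju_i) = 0$. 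Concretely, one verifies by direct computation (as in \cite{CNS84}) that
\[
G^{sr}\partial_s\partial_r(x_iu_j - x_ju_i) + G^s\partial_s(x_iu_j - x_ju_i) + G_u(x_iu_j - x_ju_i) + G^t\,\partial_t(x_iu_j - x_ju_i)
\]
collapses to zero once the rotational symmetry relations among $G^{sr}$, $G^s$ (namely that the "extra" terms produced by the $x_i$, $x_j$ factors cancel against the antisymmetrization) are invoked; the time derivative term also vanishes since $u$ solves the evolution equation with a rotationally invariant right-hand side.

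Concretely, the computation for the first identity goes as follows. Since $x_i u_j - x_j u_i$ is a combination of $u_j$ and $u_i$ with polynomial coefficients, we have $\partial_s\partial_r(x_iu_j - x_ju_i) = x_i u_{jsr} - x_j u_{isr} + \delta_{si}u_{jr} + \delta_{ri}u_{js} - \delta_{sj}u_{ir} - \delta_{rj}u_{is}$, and $\partial_s(x_iu_j - x_ju_i) = x_i u_{js} - x_j u_{is} + \delta_{si}u_j - \delta_{sj}u_i$. Combining with $\mathcal{L}u_i = \mathcal{L}u_j = 0$ already established, the terms proportional to $x_i$ and $x_j$ cancel, leaving
\[
\mathcal{L}(x_iu_j - x_ju_i) = 2G^{ir}u_{jr} - 2G^{jr}u_{ir} + G^iu_j - G^ju_i.
\]
It remains to show this residual expression vanishes; this is exactly where rotational invariance of $F$ (hence of $G^{sr}$ and $G^s$) enters, and it will be the main obstacle: one must check that the antisymmetric combination $G^{ir}u_{jr} - G^{jr}u_{ir}$ together with $\tfrac12(G^iu_j - G^ju_i)$ cancels, using the explicit forms (\ref{C2b2}) and (\ref{C2b6}) of $G^{sr}$ and $G^s$ and the fact that $F^{ij}$ commutes with $A[u]$. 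This is a routine but somewhat lengthy matrix computation, entirely parallel to the argument in \cite{CNS84}, so I would carry it out by reducing to a point where $D^2u$ (equivalently $A[u]$) is diagonal and verifying the cancellation componentwise.
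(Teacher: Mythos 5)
Your overall approach is the one the paper intends in citing \cite{CNS84}: $\mathcal{L}u_i=0$ follows by differentiating the translation-invariant equation $G(D^2u,Du,u,u_t)=-\sigma$ in $x_i$, and the rotation identity follows from rotational invariance of $G$. Your reduction (using $\mathcal{L}u_i=\mathcal{L}u_j=0$) of the first identity to the residual
\[
2G^{ir}u_{jr}-2G^{jr}u_{ir}+G^iu_j-G^ju_i=0
\]
is also correct.

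The gap is that you leave this last step as a ``routine but somewhat lengthy matrix computation'' to be done by diagonalizing $D^2u$. That is not quite the right diagonalization (it is $A[u]$, not $D^2u$, that $F^{ij}$ shares an eigenbasis with), and in any case the identity must hold pointwise without choosing a frame. The clean way to close the gap is to recognize the residual as the infinitesimal form of rotational invariance itself. Because $G$ depends on the pair $(M,p)=(D^2u,Du)$ only through the rotationally covariant quantities $w=\sqrt{1+|p|^2}$, $\gamma(p)$, and the conjugation-invariant $F$ evaluated at $A[M,p]$, one has $G(R^{T}MR,\,R^{T}p,\,u,\,u_t)=G(M,p,u,u_t)$ for every rotation $R$ of $\mathbb{R}^n$. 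Differentiating this relation in $\theta$ at $\theta=0$ along $R_\theta=\exp(\theta A)$ with the antisymmetric generator $A_{kl}=\delta_{ki}\delta_{lj}-\delta_{kj}\delta_{li}$ yields
\[
G^{sr}\bigl(A^{T}M+MA\bigr)_{sr}+G^{s}\bigl(A^{T}p\bigr)_{s}=0,
\]
which, after substituting $A$ and using the symmetry of $G^{sr}$, is exactly $-2G^{ir}M_{jr}+2G^{jr}M_{ir}-G^{i}p_j+G^{j}p_i=0$ for all admissible $(M,p)$. Setting $(M,p)=(D^2u,Du)$ kills the residual with no computation at all. With this step made explicit your proof is complete; the paper itself supplies no argument beyond the citation to \cite{CNS84}.
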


\begin{proof}[Proof of Theorem \ref{C2bt0}] Consider an arbitrary point on $\partial\Omega,$ which we may assume to be the origin of $\mathbb{R}^n,$ and choose the coordinates so that the positive $x_n$ axis is the interior normal to $\partial\Omega$ at the origin. There exists a uniform constant $r>0$ such that $\partial\Omega\cap B_r(0)$ can be represented as a graph
\[x_n=\frac{1}{2}\sum_{\alpha,\beta <n}B_{\alpha\beta}x_{\alpha}x_{\beta}+O(|x'|^3)=\rho(x'),\;\;x'=(x_1, \cdots, x_{n-1}).\]
Since $u\equiv\epsilon,\;\mbox{on $\partial\Omega\times[0,T),$}$ i.e., $u(x',\rho(x'))\equiv\epsilon$ for $\forall t\in[0,T),$
we have at the origin that
\[u_{\alpha}+u_nB_{\alpha\beta}x_{\beta}=0\,\,,\,\,u_{\alpha\beta}+u_n\rho_{\alpha\beta}=0,\,\,\forall t\in[0,T)\mbox{ and $\alpha,$ $\beta<n$ }.\]
As in \cite{CNS84}, let $T_{\alpha}=\partial_{\alpha}+\sum_{\beta<n}B_{\alpha\beta}(x_{\beta}\partial_{n}-x_n\partial_{\beta}).$
For fixed $\alpha<n,$ we have
\be\label{C2b14}
|T_{\alpha}u|\leq C \;\;\mbox{in $\{\Omega\cap B_{\epsilon}(0)\}\times[0,T),$}
\ee
\be\label{C2b13}
|T_\alpha u|\leq C|x|^2 \;\;\mbox{on $\{\partial\Omega\cap B_{\epsilon}(0)\}\times[0, T),$}
\ee
where $C$ is independent of $\epsilon$ and $T.$
Moreover by Lemma \ref{C2bl2}
\be\label{C2b15}
\mathcal{L}T_{\alpha}u=0.
\ee

Now define $$\phi=\pm T_{\alpha}u+\frac{1}{2}\sum_{l<n}u_l^2-\frac{C}{\epsilon^2}|x|^2,$$ where $C$ is chosen to be large enough (and independent of $\epsilon$ and $T$) so that
$\phi\leq 0$ on $\partial(\Omega\cap B_{\epsilon}(0))\times[0,T).$ Since $u_0\in C^2(\Omega)$ is given,
from Taylor's theorem we can assume in $\Omega\times B_{\delta}(0),$ $\delta>\epsilon>0$ is small, there exists $a_1, b_1, b_2$ and $c_1>0$ so that
\[u_0(x)\geq \epsilon +a_1x_n,\;\;\lll|T_{\alpha}u_0\rrr|\leq b_1x_n+b_2|x|^2 \;\mbox{and}\; |u_{0l}|\leq c_1|x|.\]
Therefore, we can choose a constant $C_1>0$ such that
\be\label{C2b31}
\phi- C_1\lll(1-\frac{\epsilon}{u}\rrr)\leq 0\;\;\mbox{on $\{\Omega\cap B_\epsilon(0)\}\times\{0\}$},
\ee
here and in the future, all $C$ and $C_i$ ($i=1,2,\cdots$) denote constants independent of $\epsilon$ and $t.$

\begin{lemma}\label{C2bl1}
\be\label{C2b16}
\mathcal{L}\phi\leq\sum_{l<n}G^{sr}u_{ls}u_{lr}+\frac{C}{\epsilon}\lll(\sum f_i+\sum f_i|\kappa_i|\rrr)\;\mbox{in $\{\Omega\cap B_{\epsilon}(0)\}\times[0,T)$.}
\ee
\end{lemma}
\begin{proof}
Since
\be\label{C2b17}
\begin{aligned}
&\mathcal{L}(|x|^2)=G^{sr}\partial_s\partial_r|x|^2+G^s\partial_s|x|^2+G_u|x|^2\\
&\leq\lll|2\sum G^{ss}+2\sum x_sG^s+|x|^2G_u\rrr|\\
&\leq 2\lll|\sum G^{ss}\rrr|+2\epsilon|G^s|+\epsilon^2|G_u|\\
&\leq\frac{2C\epsilon}{w}\sum f_i+2\epsilon\lll(\frac{\sigma}{w}+\frac{2}{w}(\sum f_i+\sum f_i|\kappa_i|)\rrr)
+C\epsilon\lll(\sum f_i+\sum f_i|\kappa_i|\rrr)
\end{aligned}
\ee
where we applied lemma 2.1 of \cite{GS08} and Lemma 3.3 of \cite{LX10}.

Combining (\ref{C2b17}) with Lemma \ref{C2bl2} we obtain (\ref{C2b16}).
\end{proof}

Following Ivochkina, Lin and Trudinger \cite{ILT96} we have
\begin{proposition}\label{C2bp0}
At each point in $\{\Omega\cap B_\epsilon(0)\}\times[0,T)$ there is an index $r$ such that
\be\label{C2b18}
\sum_{l<n}G^{sr}u_{ls}u_{lr}\leq-c_0u\sum_{i\neq r}f_i\tilde{\kappa}^2_i\leq\frac{c_0}{2u}\lll(\frac{2}{w^2}\sum f_i-\sum_{i\neq r}f_i\kappa_i^2\rrr).
\ee
\end{proposition}
\begin{proof}
Let $P$ be an orthogonal matrix that simultaneously diagonalizes $\{F^{ij}\}$ and $\tilde{A}=\{\tilde{a}_{ij}\}=\{\frac{1}{w}\gamma^{ik}u_{kl}\gamma^{lj}\},$
where $\gamma^{ij}=\delta_{ij}-\frac{u_iu_j}{w(1+w)}.$

Note that $w\tilde{a}_{ij}\gamma_{jl}=\gamma^{ik}u_{kl}$ and so we have
\be\label{C2b19}
\begin{aligned}
&\sum_{l<n}G^{sr}u_{ls}u_{lr}=-uwF^{ij}\tilde{a}_{iq}\tilde{a}_{jp}\gamma_{ql}\gamma_{pl}\\
&=-uw\sum_{l<n}f_i\tilde{\kappa}_i^2P_{pi}\gamma_{pl}P_{qi}\gamma_{ql}\\
&=-uw\sum_{l<n}f_i\tilde{\kappa}_i^2b^2_{li},
\end{aligned}
\ee
where $B=\{b_{rs}\}=\{P_{ir}\gamma_{is}\}$ and $\det(B)=\det(B^T)=w.$

Suppose for some $i,$ say $i=1,$ we have
$$\sum_{l<n}b_{l1}^2<\theta^2.$$
Expanding $\det B$ by cofactors along the first column gives
$$1\leq w=\det B=b_{11}C^{11}+\cdots+b_{n-11}C^{1n-1}+b_{n1}\det M\leq c_1\theta+c_2\det M,$$
where $c_1$,$c_2$ are independent of $\epsilon$ and $T,$ and
\[M= \left[ \begin{array}{ccc}
b_{12} & \cdots & b_{n-12} \\
\vdots& \ddots & \vdots \\
b_{1n}& \cdots & b_{n-1n} \end{array} \right].\]
Therefore, $\det M\geq\frac{1-c_1\theta}{c_2}.$ Now expanding $\det M$ by cofactor along row $r\geq2$ gives
$\det M\leq c_3\lll(\sum_{l<n}b_{lr}^2\rrr)^{1/2},$ where $c_3$ is independent of $\epsilon$ and $T.$ Hence
\be\label{C2b20}
\sum_{l<n}b^2_{lr}\geq\lll(\frac{1-c_1\theta}{c_2c_3}\rrr)^2.
\ee
Choosing $\theta<\frac{1}{2c_1}$ we conclude that for some $r$
\[\sum_{l<n}G^{sr}u_{ls}u_{lr}\leq-c_0u\sum_{i\neq r}f_i\tilde{\kappa}_i^2.\]
Finally (\ref{C2b18}) follows from equation (\ref{Foh3}).
\end{proof}

\begin{proposition}\label{C2bp1}
Let $L$ be defined by (\ref{C2b21}). Then
\be\label{C2b23}
L\phi\leq C_2\lll(\frac{1}{\epsilon}\sum f_i-G^{ij}\phi_i\phi_j\rrr)
\ee
for a controlled constant $C_2$ independent of $\epsilon$ and $t.$
\end{proposition}
\begin{proof}
By Lemma \ref{C2bl1} and Proposition \ref{C2bp0},
\be\label{C2b26}
\begin{aligned}
&L\phi=\mathcal{L}\phi+\frac{2}{w^2}F^{ij}a^{-}_{ik}u_k\partial_j\phi\\
&\leq\sum_{l<n}G^{sr}u_{ls}u_{lr}+\frac{C}{\epsilon}(\sum f_i+\sum f_i|\kappa_i|)+\frac{2}{w^2}F^{ij}a^-_{ik}u_k\phi_j\\
&\leq\frac{c_0}{uw^2}\sum f_i-\frac{c_0}{2u}\sum_{i\neq r}f_i\kappa_i^2+\frac{C}{\epsilon}(\sum f_i+\sum f_i|\kappa_i|)+\frac{2}{w^2}F^{ij}a^-_{ik}u_k\phi_j.
\end{aligned}
\ee
Implying the generalized Schwarz inequality,
\be\label{C2b27}
\begin{aligned}
\frac{2}{w^2}\lll|F^{ij}a^-_{ik}u_k\phi_j\rrr|&\leq2\lll(uF^{ij}\phi_i\phi_j\rrr)^{\frac{1}{2}}
\lll(\frac{1}{u}F^{ij}a^-_{il}a^-_{kj}\frac{u_ku_l}{w^2}\rrr)^{\frac{1}{2}}\\
&\leq\frac{c_0}{8nu}\sum_{\kappa_i<0}f_i\kappa^2_i-CG^{ij}\phi_i\phi_j,
\end{aligned}
\ee
where we have used Lemma 2.1 of \cite{GS08} to compare $uF^{ij}\phi_i\phi_j$ to $-G^{ij}\phi_i\phi_j.$
Moreover,
\be\label{C2b28}
\sum f_i|\kappa_i|=\sum_{\kappa_i>0}f_i\kappa_i-\sum_{\kappa_i<0}f_i\kappa_i=F+2\sum_{\kappa_i<0}f_i|\kappa_i|.
\ee
Hence we get equation (\ref{C2b23}) with $C_2$ independent of $\epsilon$ and $t.$
\end{proof}

Let $h=\lll(e^{C_2\phi}-1\rrr)-A\lll(1-\frac{\epsilon}{u}\rrr),$ with $C_2$ defined as before and $A$ to be determined later. From equation (\ref{C2b31}) we know that when $A$ is chosen large enough
\be\label{C2b29}
h\leq 0\;\;\mbox{on $\partial\{(\Omega\cap B_\epsilon(0))\times[0,T)\}$.}
\ee
Moreover, by Proposition \ref{C2bp1} and Corollary \ref{C2bc0} we get
\be\label{C2b30}
Lh\leq 0\;\;\mbox{in $(\Omega\cap B_\epsilon(0))\times[0,T)$.}
\ee
Therefore by the maximum principle we conclude that $h\leq 0$ in $(\Omega\cap B_\epsilon(0))\times[0,T).$ Since $h(0,t)=0,$ we have that $h_n(0,t)\leq 0$ for all $t\in[0,T)$
which gives
\be\label{C2b32}
|u_{\alpha n}(0,t)|\leq\frac{A}{C_2\epsilon}u_n(0,t)\;\;\mbox{for all $t\in[0,T).$}
\ee

Finally, $|u_{nn}(0,t)|$ can be estimated as in [L.Xiao] section 6 using the hypothesis (\ref{Int11}). For completeness we include the argument here. For any $t\in[0,T),$ we may assume $[u_{\alpha\beta}(0,t)]$ to be diagonal. Note also that $u_{\alpha}(0,t)=0$ for $\alpha<n.$ We have at $(x,t)=(0,t)$
$$A[u]=\frac{1}{w}
\lll[\begin{array}{cccc}
1+uu_{11}&0&\cdots&\frac{uu_{1n}}{w}\\
0&1+uu_{22}&\cdots&\frac{uu_{2n}}{w}\\
\vdots&\vdots&\ddots&\vdots\\
\frac{uu_{n1}}{w}&\frac{uu_{n2}}{w}&\cdots&1+\frac{uu_{nn}}{w^2}\\
\end{array}\rrr]. $$

By lemma 1.2 in \cite{CNS85}, if $\epsilon u_{nn}(0)$ is very large, the eigenvalues $\lambda_1,\cdots,\lambda_n$
of $A[u]$ are given by
\be\label{C2b33}
\begin{aligned}
&\lambda_\alpha=\frac{1}{w}(1+\epsilon u_{\alpha\alpha}(0))+o(1),\,\alpha<n\\
&\lambda_n=\frac{\epsilon u_{nn}(0)}{w^3}\lll(1+O\lll(\frac{1}{\epsilon u_{nn}(0)}\rrr)\rrr).
\end{aligned}
\ee
If $\epsilon u_{nn}\geq R$ where $R$ is a uniform constant, then by (\ref{Int10}), (\ref{Int11}) and Proposition \ref{Grep0} we have
\[\sigma\geq\frac{1}{w}F(wA[u])(0)\geq(\sigma-C\epsilon)\lll(1+\frac{\epsilon_0}{2}\rrr)>\sigma\]
which is a contradiction. Therefore
\[|u_{nn}(0)|\leq\frac{R}{\epsilon}\]
and the proof is completed.
\end{proof}

\section{$C^2$ global estimates}\label{c2g}

In this section we will prove a maximum principle for the largest hyperbolic principal curvature $\kappa_{\max}(x, t)$ of solutions of $f(\kappa[u(x,t)])=\sigma.$

As before, we denote the metric in $\mathbb{H}^{n+1}$ by $g_{ij}$ and denote the hyperbolic second fundamental form by $h_{ij}.$
Now consider function
\be\label{C2g3}
\varphi=\max_{(x,t)\in\overline{\Omega}_T}\frac{\kappa_{\max}(x,t)}{\nu^{n+1}-a},
\ee
where $\inf_{\overline{\Omega}_T}\nu^{n+1}>a.$
\begin{theorem}\label{c2gt0}
Suppose $f$ satisfies (\ref{Int5})-(\ref{Int10}) and $\sigma\in(0,1)$ satisfies $\sigma>\sigma_0,$
where $\sigma_0$ is the unique zero in $(0,1)$ of
\be\label{c2g40}
\phi(a):=\frac{4}{3}a-\frac{1}{27}a^3-\frac{1}{27}(a^2+3)^{\frac{3}{2}}.
\ee
Let $u\in C^{4,2}(\Omega\times[0,T))$ be an admissible solution of (\ref{Int3}) such that $\nu^{n+1}(x,t)=\frac{1}{w}\geq\sigma,$ for all $(x,t)\in\Omega_T.$
Then at an interior maximum of $\varphi,$ there is a constant C (independent of $\epsilon$ and $t$), such that
\be\label{c2g41}
\kappa_{\max}\leq C.
\ee
Numerical calculations show $0.14596<\sigma_0<0.14597.$
\end{theorem}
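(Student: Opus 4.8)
The strategy is the standard one for curvature estimates of this type: compute the parabolic evolution of the test quantity $\log\kappa_{\max} - \log(\nu^{n+1}-a)$ at an interior spatial-temporal maximum, using the evolution equations from Section \ref{Se}, and extract a quadratic-in-curvature contradiction unless $\kappa_{\max}$ is already bounded. Since $\kappa_{\max}$ is only Lipschitz where the top eigenspace is not simple, I would first replace $\kappa_{\max}$ at the maximum point by the largest eigenvalue $\kappa_1$ of $h^i_j$ relative to $g_{ij}$ in a frame that diagonalizes $h_{ij}$ and $g_{ij}$ at that point, a device justified exactly as in \cite{GSZ09}; one then works with the smooth quantity $\frac{h_{11}}{(\nu^{n+1}-a)}$ (in an adapted frame transported along geodesics) and all the derivative identities one needs are the ones for $h_{11}$, $\nu^{n+1}$, $F$, and the metric listed in Lemmas \ref{Evol0}--\ref{Evol3}.

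First I would set $W = \log h_{11} - \log(\nu^{n+1}-a)$, assume $W$ attains an interior max at $(x_0,t_0)$, and write out $0 \le \partial_t W - \mathcal{L} W$ where $\mathcal{L} = \partial_t - F^{ij}\nabla_{ij}$ is the linearized operator. For the $\nu^{n+1}$ term I use (\ref{Evo10}) together with the Codazzi/Simons-type identity to produce $\mathcal{L}\nu^{n+1}$ in terms of $\sum f_i \kappa_i^2$, $w$, $\nu^{n+1}$, and lower-order data; for $h_{11}$ I differentiate the evolution of $h_{ij}$ twice (or equivalently combine (\ref{Evo13}) with the commutation formulas for $\nabla^2 h_{11}$ in hyperbolic ambient space, which contributes curvature-of-$\mathbb{H}^{n+1}$ terms $\pm(h_{11}-h_{ii})$ and the $-h_{11}$ from sectional curvature $-1$). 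The critical-point conditions $\nabla_i W = 0$ let me substitute $\frac{\nabla_i h_{11}}{h_{11}} = \frac{\nabla_i \nu^{n+1}}{\nu^{n+1}-a}$, which is exactly what turns the otherwise-uncontrolled gradient terms $F^{ij}\frac{\nabla_i h_{11}\nabla_j h_{11}}{h_{11}^2}$ into multiples of $F^{ij}\nabla_i\nu^{n+1}\nabla_j\nu^{n+1}/(\nu^{n+1}-a)^2$, which the concavity term $F^{ij,kl}\nabla_1 h_{ij}\nabla_1 h_{kl} \le 0$ plus the "good" third-order terms can absorb. After all cancellations the inequality collapses to something of the form
\be\label{plan1}
0 \le -c\,\kappa_1^2 \sum f_i + C\Big(\kappa_1 \sum f_i + \sum f_i + \sum f_i \kappa_i\Big),
\ee
and since $\sum f_i \ge 1$ by (\ref{Int13}) and $\sum f_i\kappa_i = F = \sigma$ is bounded, one concludes $\kappa_1 \le C$ provided $c>0$.

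The whole point of the theorem — and the place where the explicit $\sigma_0$ and the strange cubic $\phi(a)$ enter — is precisely showing $c>0$. The coefficient $c$ arising from the above computation is not automatically positive: the hyperbolic curvature contributes terms with the "wrong" sign (because sectional curvature is $-1$, and because the substitution via $\nu^{n+1}$ introduces the factor $\nu^{n+1}-a$ in denominators), and one must choose the auxiliary constant $a$ optimally. Tracking the dominant quadratic coefficient, using $\nu^{n+1} = 1/w \ge \sigma$, $|\nu^{n+1}|\le 1$, and $F=\sigma$, the condition for positivity reduces after optimization over $a\in(0,1)$ to an algebraic inequality in $\sigma$ whose boundary case is the vanishing of $\phi(a)$ in (\ref{c2g40}); the minimal such $\sigma$ is $\sigma_0$, and the numerical bracketing $0.14596<\sigma_0<0.14597$ comes from solving $\phi(a)=0$ numerically. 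I expect this algebraic optimization — carefully isolating which terms are genuinely quadratic in $\kappa_1$ after using the critical-point relations, and then minimizing the resulting expression in $a$ to get the sharp threshold — to be the main obstacle; the evolution-equation bookkeeping, while lengthy, is routine given Lemmas \ref{Evol0}--\ref{Evol3} and the gradient bound $w\le 1/\sigma$ from Proposition \ref{Grep0}.
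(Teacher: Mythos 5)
Your overall framework is the right one and matches the paper's: test quantity $\kappa_{\max}/(\nu^{n+1}-a)$, maximum principle at an interior critical point, concavity of $F$ to control $F^{ij,kl}$, and the critical-point substitution to convert $\nabla_i h_{11}$ terms into $\nabla_i\nu^{n+1}$ terms. (Your use of the logarithm instead of the quotient is immaterial; the paper also rewrites the surface near the maximum as a \emph{radial} graph in the sense of Section~\ref{Fohthird}, which you do not mention but which is what makes the evolution of $h^1_1$ and $\nu^{n+1}$ computable in closed form --- that is the actual content borrowed from \cite{GSZ09}, not just the eigenvalue-regularization device.)

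However, your sketch of the final step contains a genuine gap, and the claimed structure is wrong. You assert that after all cancellations the inequality ``collapses to'' $0 \le -c\,\kappa_1^2\sum f_i + C(\kappa_1\sum f_i + \sum f_i + \sum f_i\kappa_i)$, so that the threshold amounts to positivity of a single quadratic coefficient $c$, to be found by ``optimizing over $a$.'' That is not what happens. After substituting the critical-point relation and the concavity bound, the inequality (equation~\eqref{C2g32}) still contains three kinds of problematic terms: the good negative contribution is \emph{cubic} in $\kappa_1$ (through $-\frac{a\kappa_1}{y-a}\sum f_i\kappa_i^2 \le -\frac{a\kappa_1}{y-a}f_1\kappa_1^2$, with $f_1$ possibly tending to zero as $\kappa_1\to\infty$), the concavity term $\frac{2y^2\kappa_1^2}{(y-a)^2}\sum_{i\ge 2}\frac{f_i-f_1}{\kappa_i-\kappa_1}(\nu^{n+1}_i)^2$ is a sum of positive terms of indefinite size, and the first-order term $\frac{2y\kappa_1}{y-a}\sum f_i y_i\nu^{n+1}_i$ has mixed sign depending on whether $\kappa_i \gtrless \nu^{n+1}$. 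None of these can be dismissed as ``lower order''; they do not simply get absorbed by a fixed quadratic. The key device you are missing is the partition of the index set into three classes $I$, $J$, $L$ according to the size of $\kappa_i - y$ and the size of $f_i$ relative to $f_1$ (with a parameter $\theta$ chosen later), so that in each class a different one of the good terms dominates: the very negative $\kappa_i$'s ($I$) are absorbed using $(\kappa_i-\nu^{n+1})^2$, the class $J$ is controlled by $f_1\kappa_1^2$, and the class $L$ uses the concavity term itself to cancel $\frac{f_i-f_1}{\kappa_1-\kappa_i}$. The threshold does not come from minimizing a quadratic coefficient over $a$; it comes from requiring $\phi_\theta(y) = a - \frac{(1+\theta)(1-y^2)(y-a)}{2(1-\theta)} \ge 0$ for all $y\in(a,1]$ (with $\theta\to 0$), i.e.\ $\phi(a)\ge 0$, which is increasing and vanishes at $\sigma_0$, \emph{combined with} the constraint $a<\inf\nu^{n+1}\le\sigma$; the hypothesis $\sigma>\sigma_0$ is exactly what makes these two requirements on $a$ compatible. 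Without identifying the $I$, $J$, $L$ decomposition, the hard half of the theorem (the appearance of the explicit cubic $\phi$ and the sharp $\sigma_0$) cannot be completed from your outline.
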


We begin the proof of Theorem \ref{c2gt0} which is long and computational.

Assume $\varphi$ achieves its maximum at an interior point $(x_0,t_0).$ We may rewrite $\Sigma(t_0)$ locally near $\mathbf{X}_0=(x_0, u(x_0,t_0))$ as a radial graph $\mathbf{X}=e^{v(\mathbf{z},t)}\mathbf{z},\;(\mathbf{z},t)\in\mathbb{S}^n_+\times(0,T),$ such that $\nu(\mathbf{X}_0)=\mathbf{z}_0,$ and we may also choose the local
coordinates around $\mathbf{z}_0\in\mathbb{S}^n_+$ such that at $(\mathbf{z}_0,t_0)$
$$\tilde{g}_{ij}=\delta_{ij}\;\;\mbox{and}\;\;\frac{\partial\tg_{ij}}{\partial\theta^k}=0.$$
By a standard calculation, we also know that $v(\mathbf{z}, t)$ satisfies
$$v_t=yw(f-\sigma).$$
Moreover, we can also assume $\thh_{ij}$ is diagnalized at $(\mathbf{z}_0,t_0).$
At last, since dilation is an isometry for radial graph, without loss of generality we may assume $v(\mathbf{z}_0,t_0)=0$
Therefore at $(\mathbf{z}_0,t_0)$ we have
\be\label{C2g4}
g_{ij}=\frac{\tg_{ij}}{u^2}=\frac{\delta_{ij}}{y^2}\;\;and\;\;g^{ij}=u^2\tg^{ij}=y^2\delta_{ij},
\ee
\be\label{C2g5}
h_{ij}=\frac{1}{u}\thh_{ij}+\frac{\nu^{n+1}}{u^2}\tg_{ij}=\frac{\thh_{ij}}{y}+\frac{\delta_{ij}}{y}.
\ee
Differentiating (\ref{Foh17}) with respect to $\theta^k$ we get
\be\label{C2g6}
\begin{aligned}
&\frac{\partial\tg_{ij}}{\partial\theta^k}=\frac{\partial[e^{2v}(\sigma_{ij}+v_iv_j)]}{\partial\theta^k}\\
&=2e^{2v}v_k(\sigma_{ij}+v_iv_j)+e^{2v}\lll(\frac{\partial\sigma_{ij}}{\partial\theta^k}+v_{ik}v_j+v_iv_{jk}\rrr)=0.
\end{aligned}
\ee
Since $\nu(\mathbf{X}_0)=\mathbf{z}_0,$ we conclude that at $(\mathbf{z}_0,t_0)$
\[\frac{\partial\sigma_{ij}}{\partial\theta^k}=0\]
which implies
$$\Gamma'^k_{ij}=0.$$
Thus
\be\label{C2g7}
\nabla'_{ij}v=v_{ij}=\tilde{\nabla}_{ij}v,
\ee
where $\tilde{\nabla}_{ij}$ denotes the covariant differentiations in the metric $\tg$ with respect to the local coordinates on $\Sigma(t_0).$

Recall that by Lemma \ref{Evol0} we have
$$\frac{\partial\tg_{ij}}{\partial t}=-2(F-\sigma)u\thh_{ij}.$$
On the other hand
\be\label{C2g8}
\begin{aligned}
\frac{\partial\tg_{ij}}{\partial t}&=2e^{2v}v_t(\sigma_{ij}+v_iv_j)+e^{2v}\lll(\dot{\sigma}_{ij}+\dot{v}_iv_j+v_i\dot{v}_j\rrr)\\
&=2\tg_{ij}yw(F-\sigma)+e^{2v}\lll(\dot{\sigma}_{ij}+\dot{v}_iv_j+v_i\dot{v}_j\rrr).
\end{aligned}
\ee
Therefore at $(\mathbf{z}_0,t_0)$
\be\label{C2g9}
\dot{\sigma}_{ij}=-2y(F-\sigma)\thh_{ij}-2y(F-\sigma)\delta_{ij}.
\ee
Combining equation (\ref{Foh19}) and (\ref{C2g9}) we get
\be\label{C2g10}
\begin{aligned}
\frac{\partial{\thh_{ij}}}{\partial t}&=\thh_{ij}y(F-\sigma)+\nabla'_{ij}[yw(F-\sigma)]+2(F-\sigma)y\thh_{ij}+2y(F-\sigma)\delta_{ij}\\
&=3\thh_{ij}y(F-\sigma)+\nabla'_{ij}[yw(F-\sigma)]+2y(F-\sigma)\delta_{ij}\\
&=3\thh_{ij}y(F-\sigma)+\{y\nabla'_{ij}F+y(F-\sigma)v_{li}v_{lj}\\
&-(F-\sigma)y\delta_{ij}+y_iF_j+y_jF_i\}+2y(F-\sigma)\delta_{ij}\\
&=3\thh_{ij}y(F-\sigma)+y\nabla'_{ij}F+y(F-\sigma)v_{li}v_{lj}+y_iF_j+y_jF_i+y(F-\sigma)\delta_{ij}.
\end{aligned}
\ee

We can always assume at $(\mathbf{z}_0, t_0)$ $\kappa_{\max}=g^{11}h_{11},$ thus we only need to compute $\dot{h}_{11}$ at this point. From now on, all calculations are done at $(\mathbf{z}_0,t_0)$
if no additional explanations.

\begin{lemma}\label{C2glm0}
At $(\mathbf{z}_0, t_0),$
\be\label{C2g25}
\begin{aligned}
\frac{\partial}{\partial t}h^1_1-y^2F^{ii}\nabla_{ii}h^1_1&=3\F\kappa_1^2+y^2F^{ij,kl}h^j_{i;1}h^l_{k;1}\\
&-\F+\lll(\kappa_1\sum f_i\kappa_i^2+\kappa_1\sum f_i-F-\kappa_1^2F\rrr).
\end{aligned}
\ee
\end{lemma}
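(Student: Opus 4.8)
\textbf{Proof strategy for Lemma \ref{C2glm0}.}

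The plan is to compute $\partial_t h^1_1$ directly from the evolution equations derived in Section \ref{Se}, and then to identify the second-order term $-y^2 F^{ii}\nabla_{ii}h^1_1$ by differentiating the flow equation twice in the radial-graph picture and invoking the Codazzi equations. First I would write $h^1_1 = g^{11}h_{11}$ and use the product rule $\partial_t h^1_1 = \dot g^{11}h_{11} + g^{11}\dot h_{11}$; the factor $\dot g^{11}$ comes from Lemma \ref{Evol0} (equation (\ref{Evo8})), and at $(\mathbf{z}_0,t_0)$ this contributes terms proportional to $\F$ times $\kappa_1$ and $\kappa_1^2$ after using (\ref{C2g4})--(\ref{C2g5}). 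For $\dot h_{11}$ I would not use (\ref{Evo13}) in the vertical-graph form directly, but rather re-derive the evolution of $\thh_{ij}$ in the radial coordinates: equation (\ref{C2g10}) already packages $\partial_t \thh_{ij}$ in terms of $y\nabla'_{ij}F$ plus lower-order curvature terms, and combining it with $h_{ij} = \frac{1}{u}\thh_{ij} + \frac{\nu^{n+1}}{u^2}\tg_{ij}$ (equation (\ref{C2g5}) at the base point) gives $\dot h_{11}$ up to the Hessian term $y\nabla'_{11}F$.

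The crux is then to convert $F^{ii}\nabla_{ii}h^1_1$ — the natural elliptic operator applied to the curvature — into $\nabla'_{11}F$ plus explicit curvature corrections. Here the key steps are: (i) differentiate the equation $F(A^s[v])=\sigma + \F$ (really $F = \sigma$ at the limit equation, with the flow correction carried along) twice with respect to $\theta^1$, producing $F^{ij}\nabla_{11}h_{ij} + F^{ij,kl}(h_{ij})_{;1}(h_{kl})_{;1} = \nabla_{11}F$; (ii) commute covariant derivatives using the Codazzi equation, which in hyperbolic space $\mathbb H^{n+1}$ (constant curvature $-1$) reads $\nabla_k h_{ij} = \nabla_i h_{kj}$, so that $\nabla_{11}h_{ij}$ and $\nabla_{ij}h_{11}$ differ by curvature terms of the form $(\kappa_i - \kappa_1)$ weighted by components of $\Riem$, i.e.\ by $\delta$-terms since the ambient curvature tensor of $\mathbb H^{n+1}$ is $R_{ikjl} = -(g_{ij}g_{kl} - g_{il}g_{kj})$; (iii) at $(\mathbf{z}_0,t_0)$ with $g_{ij} = y^{-2}\delta_{ij}$ and $\{F^{ij}\}$, $\{h_{ij}\}$ simultaneously diagonal, collect everything. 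The Simons-type identity that emerges from (i)--(iii) is exactly what supplies the terms $y^2 F^{ij,kl}h^j_{i;1}h^l_{k;1}$, $-\F$, and the bracketed expression $\kappa_1\sum f_i\kappa_i^2 + \kappa_1\sum f_i - F - \kappa_1^2 F$; in particular the $\kappa_1\sum f_i\kappa_i^2$ term is the $|A|^2$-type contribution from commuting derivatives through $h^k_i h_{kj}$, while $-F - \kappa_1^2 F$ and $\kappa_1\sum f_i$ arise from the $-1$ ambient curvature and the $\nu^{n+1}$-dependent pieces in (\ref{Foh2})--(\ref{Foh3}).

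I expect the main obstacle to be \emph{bookkeeping rather than conceptual}: keeping track of which terms are evaluated at the base point (where many Christoffel symbols and first derivatives of $\tg$ vanish) versus which survive differentiation, and correctly assembling the ambient-curvature contributions with the right signs when passing between $\thh_{ij}$, $h_{ij}$, and $h^1_1$. A secondary subtlety is that $F$ satisfies the flow-corrected equation, not simply $F = \sigma$, so the derivatives of $F$ pick up derivatives of $\F$ and of $w$ and $y$; one must check that the terms which are not displayed in (\ref{C2g25}) — those involving $\nabla_{ij}(\F)$ and first derivatives of $\F$ — are precisely the ones that will be handled downstream (they vanish or are controlled at an interior maximum of $\varphi$, which is where this lemma is ultimately applied), so that the clean form (\ref{C2g25}) is what we record here. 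The homogeneity relation (\ref{Foh6}), the trace identities (\ref{Foh6})--(\ref{Foh7}), and the concavity inequality (\ref{Int12})--(\ref{Int13}) will be used to simplify $F^{ij}h_{ij} = F$, $F^{ij}h_{ik}h^k_j = \sum f_i\kappa_i^2$, and to absorb cross terms.
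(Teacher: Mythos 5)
Your strategy — product rule for $h^1_1 = g^{11}h_{11}$, the radial-graph evolution of $\thh_{ij}$ from (\ref{C2g10}), the covariant chain rule for $\nabla_{11}F$, and the Gauss/Codazzi/Ricci commutation between $\nabla_{11}h_{ii}$ and $\nabla_{ii}h_{11}$ — is precisely the route the paper takes, and carried out carefully it does produce (\ref{C2g25}).

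Two corrections to your framing, though. First, your "secondary subtlety" is a misconception: the identity (\ref{C2g25}) is exact at $(\mathbf{z}_0,t_0)$ and makes no use of the maximum of $\varphi$ (maximality only enters later, at (\ref{C2g30})). There are no lingering terms involving $\nabla_{ij}\F$ or derivatives of $w$, $y$ to be "handled downstream." Indeed, $\F=F-\sigma$ has the same derivatives as $F$, and $F$ is differentiated covariantly on $\Sigma$ as a function of the shape operator via $F^{ij}$, $F^{ij,kl}$ — one never differentiates the scalar flow equation. The first-derivative terms $\frac{2}{y}y_1F_1$ and $-\frac{1}{y}\nabla'F\cdot\nabla'y$ that appear in $\dot h_{11}$ are exactly absorbed by the Christoffel correction in (\ref{C2g19}) when converting $\nabla'_{11}F$ to the hyperbolic Hessian $\nabla_{11}F$, which is why (\ref{C2g20}) is so clean. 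Second, a misattribution: $-\kappa_1^2F$ comes from the second-fundamental-form piece $h_{ik}h_{jl}-h_{il}h_{jk}$ of the Gauss equation (along with $\kappa_1\sum f_i\kappa_i^2$), not from the ambient $-1$ curvature; only $-F$ and $\kappa_1\sum f_i$ come from the constant-curvature piece $-(g_{ik}g_{jl}-g_{il}g_{jk})$.
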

\begin{proof}Differentiating equation (\ref{C2g5}) with respect to $t$ we get
\be\label{C2g11}
\dot{h}_{ij}=\frac{1}{u}\dot{\thh}_{ij}-\frac{\thh_{ij}}{u^2}\dot{u}+\frac{\dot{\nu^{n+1}}}{u^2}\tg_{ij}
+\frac{\nu^{n+1}}{u^2}\dot{\tg}_{ij}-2\frac{\nu^{n+1}}{u^3}\tg_{ij}\dot{u}.
\ee
Since
\be\label{C2g12}
\dot{u}=\frac{\partial e^vy}{\partial t}=e^vyv_t=y^2(F-\sigma)
\ee
and
\be\label{C2g13}
\begin{aligned}
\dot{\nu}^{n+1}&=\frac{\partial}{\partial t}\left\{\frac{y-\nabla'v\cdot\nabla'y}{w}\right\}\\
&=-\nabla'[yw(F-\sigma)]\cdot\nabla'y\\
&=-(F-\sigma)(1-y^2)-y\nabla'F\cdot\nabla'y,
\end{aligned}
\ee
we obtain
\be\label{C2g14}
\begin{aligned}
\dot{h}_{11}&=\frac{1}{y}\lll[3\thh_{11}y(F-\sigma)+y\nabla'_{11}F+y(F-\sigma)v_{l1}v_{l1}+2y_1F_1+y(F-\sigma)\rrr]\\
&-\thh_{11}(F-\sigma)-\frac{(F-\sigma)(1-y^2)}{y^2}-\frac{y\nabla'F\cdot\nabla'y}{y^2}\\
&-2(F-\sigma)\thh_{11}-2(F-\sigma)\\
&=\nabla'_{11}F+(F-\sigma)v_{l1}^2+\frac{2}{y}y_1F_1-\frac{(F-\sigma)}{y^2}-\frac{1}{y}\nabla'F\cdot\nabla'y\\
&=\nabla'_{11}F+(F-\sigma)(\thh_{11}+1)^2+\frac{2}{y}y_1F_1-\frac{(F-\sigma)}{y^2}-\frac{1}{y}\nabla'F\cdot\nabla'y.
\end{aligned}
\ee
Here we used $\thh_{ij}=v_{ij}-\delta_{ij}$ at $(\mathbf{z}_0, t_0).$

By equation (\ref{Foh0}), (\ref{Evo7}) and (\ref{C2g12}) at the point $(\mathbf{z}_0,t_0)$ we get
\be\label{C2g15}
\dot{g}_{ij}=-2(F-\sigma)h_{ij}.
\ee
On the other hand
$$\frac{\partial}{\partial t}\lll(g^{ik}g_{ij}\rrr)=\frac{\partial}{\partial t}\delta^k_j=0,$$
hence
\be\label{C2g16}
\dot{g}^{ik}=2y^4(F-\sigma)h_{ki}.
\ee
Finally we have
\be\label{C2g17}
\begin{aligned}
\frac{\partial h^1_1}{\partial t}&=\frac{\partial}{\partial t}\lll(g^{1k}h_{k1}\rrr)=\dot{g}^{1k}h_{k1}+g^{1k}\dot{h}_{k1}\\
&=2y^4(F-\sigma)h_{k1}^2+y^2\dot{h}_{11}\\
&=2y^4(F-\sigma)h_{1k}^2+y^2\left[\nabla'_{11}F+(F-\sigma)(\thh_{11}+1)^2+\frac{2}{y}y_1F_1\right.\\
&\left.-\frac{\F}{y^2}-\frac{1}{y}\nabla'F\cdot\nabla'y\right].
\end{aligned}
\ee

By (\ref{Foh10}) at $(\mathbf{z}_0,t_0)$ we get
\be\label{C2g19}
\na_{ij}f=\tna_{ij}f+\frac{1}{y}\lll(y_if_j+y_jf_i-\sum y_lf_l\delta_{ij}\rrr).
\ee
Therefore we can rewrite equation (\ref{C2g17}) as
\be\label{C2g20}
\frac{\partial h^1_1}{\partial t}=3(F-\sigma)\kappa_1^2+y^2\na_{11}F-\F.
\ee
Moreover
\be\label{C2g21}
\begin{aligned}
\na_{11}F&=F^{ii}h^i_{i;11}+F^{ij,kl}h^j_{i;1}h^l_{k;1}\\
&=F^{ii}\na_{11}\lll(g^{ik}h_{ki}\rrr)+F^{ij,kl}h^j_{i;1}h^l_{k;1}\\
&=y^2F^{ii}\na_{11}h_{ii}+F^{ij,kl}h^j_{i;1}h^l_{k;1}.
\end{aligned}
\ee
Thus
\be\label{C2g22}
\frac{\partial h^1_1}{\partial t}=3\F\kappa^2_1+y^4F^{ii}\na_{11}h_{ii}+y^2F^{ij,kl}h^j_{i;1}h^l_{k;1}-\F.
\ee

Next let's recall the following well-known fundamental equations for a hypersurface $\Sigma$ in $\mathbb{H}^{n+1}:$
\[\begin{aligned}
&\mbox{Coddazzi equation:}\;\; \na_ih_{jk}=\na_jh_{ki}=\na_kh_{ij},\\
&\mbox{Gauss equation:}\;\;R_{ijkl}=(h_{ik}h_{jl}-h_{il}h_{jk})-(g_{ik}g_{jl}-g_{il}g_{jk}),\\
&\mbox{Ricci equation:}\;\;\na_l\na_kh_{ij}-\na_k\na_lh_{ij}=h_{ip}g^{pq}R_{qjkl}+h_{jp}g^{pq}R_{qikl}.
\end{aligned}\]
So we have
\be\label{C2g23}
\begin{aligned}
\na_{11}h_{ii}-\na_{ii}h_{11}&=\na_1\na_ih_{1i}-\na_i\na_1h_{1i}\\
&=h_{1p}g^{pq}R_{qii1}+h_{ip}g^{pq}R_{q1i1}\\
&=\kappa_1R_{1ii1}+\kappa_iR_{i1i1}
\end{aligned}
\ee
and
\be\label{C2g24}
R_{1ii1}=-h_{11}h_{ii}+\frac{1}{y^4},\;R_{i1i1}=h_{ii}h_{11}-\frac{1}{y^4}.
\ee
Substituting equation (\ref{C2g24}) into (\ref{C2g23}) and combining with equation (\ref{C2g22}) we obtain
(\ref{C2g25}).
\end{proof}
\begin{lemma}\label{C2glm1}
At $(\mathbf{z}_0, t_0),$
\be\label{C2g29}
\begin{aligned}
\dot{\nu}^{n+1}-y^2F^{ii}\nabla_{ii}\nu^{n+1}
&=-\F(1-y^2)-(1-y^2)\sum f_i(\kappa_i-y)\\&+y\sum f_i(\kappa_i-y^2)-2y\sum f_iy_i\nu_i^{n+1}.
\end{aligned}
\ee
\end{lemma}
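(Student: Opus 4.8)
The plan is to mirror the derivation of Lemma~\ref{C2glm0}, working at the interior maximum $(\mathbf{z}_0,t_0)$ in the radial-graph representation $\mathbf{X}=e^{v(\mathbf{z},t)}\mathbf{z}$ with the normalizations already fixed there: $v=0$, $\nabla'v=0$, $w=1$, $\sigma_{ij}=\tg_{ij}=\delta_{ij}$, $\partial_k\sigma_{ij}=\partial_k\tg_{ij}=0$ (so every spherical and Euclidean-induced Christoffel symbol vanishes at the point), $u=y$, $\nu^{n+1}=y$, $\thh_{ij}$ diagonal with $v_{ij}=\thh_{ij}+\delta_{ij}$, and $\kappa_i=y(\thh_{ii}+1)$, i.e.\ $\thh_{ii}=(\kappa_i-y)/y$. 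Since $\dot\nu^{n+1}$ has already been computed in \eqref{C2g13} as $\dot\nu^{n+1}=-(F-\sigma)(1-y^2)-y\,\nabla'F\cdot\nabla'y$, the whole task is to evaluate $y^2F^{ii}\nabla_{ii}\nu^{n+1}$ and subtract.

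For the Hessian I would begin from the exact Euclidean Weingarten identity $\nu_i^{n+1}=-\thh^k_i\langle\tau_k,e_{n+1}\rangle$, observe that $\langle\tau_k,e_{n+1}\rangle=u_k$ and hence equals $y_k$ at the point (so $\nu_i^{n+1}=-\thh_{ii}y_i$), and differentiate once more; using $\langle\tilde D_{\tau_j}\tau_k,e_{n+1}\rangle=\thh_{jk}\nu^{n+1}$ at the point together with the Codazzi equation one obtains $\tna_{ii}\nu^{n+1}=-\sum_k(\tna_k\thh_{ii})y_k-y\,\thh_{ii}^2$, which can equally be checked by differentiating \eqref{Foh19} twice and using $\nabla'_{ij}y=-y\sigma_{ij}$. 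Passing to the hyperbolic Hessian by \eqref{Foh10} adds the first-order correction $\frac1y\bigl(2y_i\nu_i^{n+1}-\sum_k y_k\nu_k^{n+1}\bigr)$. Contracting everything with $F^{ij}=f_i\delta_{ij}$, I would simplify using four facts: the Euler relation $\sum f_i\kappa_i=F$; the spherical identities $|\nabla'y|^2=1-y^2$ and $\nabla'_{ij}y=-y\sigma_{ij}$ (which already entered implicitly in \eqref{C2g13}); and --- the only non-mechanical input --- the identity obtained by applying $\tna_k$ to the matrix relation \eqref{Foh2}--\eqref{Foh3}, namely
\[
\sum_i f_i\,\tna_k\thh_{ii}=\frac1y\Bigl(F_k-y_k\sum_i f_i\thh_{ii}+\thh_{kk}y_k\sum_i f_i\Bigr),
\]
which turns the third-derivative terms $F^{ii}\tna_k\thh_{ii}$ into a multiple of $\nabla'F\cdot\nabla'y$. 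That substitution is exactly what cancels the $-y\,\nabla'F\cdot\nabla'y$ coming from $\dot\nu^{n+1}$, so no derivative of $F$ survives --- this is why \eqref{C2g29} carries no $F^{ij,kl}$ term, unlike \eqref{C2g25}.

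After these cancellations one is left with $-(F-\sigma)(1-y^2)-y(1-y^2)\sum f_i\thh_{ii}+y^3\sum f_i\thh_{ii}^2+2y\sum f_i\thh_{ii}y_i^2$; inserting $\thh_{ii}=(\kappa_i-y)/y$, invoking $\sum f_i\kappa_i=F$ once more, and rewriting $2y\sum f_i\thh_{ii}y_i^2=-2y\sum f_iy_i\nu_i^{n+1}$ collapses this to the right-hand side of \eqref{C2g29}. I expect the main difficulty to be organizational rather than conceptual: one has to track several $y_k^2$-weighted sums (two large ones cancel only after the conversion identity above is applied), keep the ``Euclidean'' curvature function $f(\tilde\kappa)$ carefully separate from the ``hyperbolic'' one $f(\kappa)$ when differentiating $\thh$, and verify every $\nabla'y$- and $\nabla'F$-cancellation --- but there is no genuine geometric obstacle beyond the structure equations and the elementary properties of linear functions restricted to $\mathbb{S}^n$ recorded in Section~\ref{Foh}.
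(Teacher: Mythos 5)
Your proposal follows essentially the same route as the paper: compute $\nu_i^{n+1}=-\thh_{ii}y_i$ and the Euclidean Hessian $\tna_{ii}\nu^{n+1}=-\sum_l y_l\tna_l\thh_{ii}-y\thh_{ii}^2$, pass to $\na_{ii}$ via (\ref{Foh10}), contract with $F^{ii}$, and use (\ref{C2g42}) to cancel the $-y\,\nabla'F\cdot\nabla'y$ contribution from $\dot\nu^{n+1}$---your Weingarten/Gauss derivation of the Hessian is just a repackaging of the direct differentiation in (\ref{C2g26})--(\ref{C2g27}), which you yourself note is an equivalent check. One small flag: your computation (like the paper's own final line) actually yields $y\sum f_i(\kappa_i-y)^2$, so the factor $y\sum f_i(\kappa_i-y^2)$ displayed in (\ref{C2g29}) is a typo in the lemma statement, as the expansion used in (\ref{C2g30}) confirms.
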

\begin{proof}
By differentiating $\nu^{n+1}$ we get
\be\label{C2g26}
\begin{aligned}
\nu^{n+1}_i&=\lll(\frac{y-\na'v\cdot\na'y}{w}\rrr)_i\\
&=\frac{y_i-v_{li}y_l-v_ly_{li}}{w}-\frac{y-\na'v\cdot\na'y}{w^2}w_i\\
&=y_i-v_{li}y_l=-\thh_{ii}y_i\\
\end{aligned}
\ee
and
\be\label{C2g27}
\begin{aligned}
\nu^{n+1}_{ij}&=\frac{y_{ij}-v_{lij}y_l-v_{li}y_{lj}-v_{lj}y_{li}-v_ly_{lij}}{w}\\
&-\frac{y-\na'v\cdot\na'y}{w^2}w_{ij}-w_i\lll(\frac{y-\na'v\cdot\na'y}{w^2}\rrr)_j\\
&=-y\delta_{ij}-v_{lij}y_l+yv_{li}\delta_{lj}+yv_{lj}\delta_{li}-yv_{li}v_{lj}\\
&=-y_l\tna_l\thh_{ij}-y\thh_{li}\thh_{lj}.
\end{aligned}
\ee
Then
\be\label{C2g28}
\begin{aligned}
\na_{ij}\nu^{n+1}&=\tna_{ij}\nu^{n+1}+\frac{1}{y}\lll(y_i\nu^{n+1}_j+y_j\nu^{n+1}_i\rrr)-\frac{1}{y}\sum y_l\nu^{n+1}_l\delta_{ij}\\
&=-y_l\na_l\thh_{ij}-y\thh_{li}\thh_{lj}+\frac{1}{y}\lll(y_i\nu^{n+1}_j+y_j\nu^{n+1}_i\rrr)-\frac{1}{y}\sum y_l\nu^{n+1}_l\delta_{ij}.
\end{aligned}
\ee
Moreover, differentiating $F$ with respect to $\tau_l,$
\be\label{C2g42}
\begin{aligned}
\nabla_lF&=F^{ij}\lll(h_i^j\rrr)_l=F^{ij}\lll(u\thh_i^j+\nu^{n+1}\delta_{ij}\rrr)_l\\
&=F^{ij}\lll\{y_l\thh_{ij}+y\nabla_l\thh_{ij}+\nu^{n+1}_l\delta_{ij}\rrr\}.
\end{aligned}
\ee

Combining equations (\ref{C2g13}), (\ref{C2g28}) and (\ref{C2g42}) we have
\begin{align*}
&\dot{\nu}^{n+1}-y^2F^{ii}\na_{ii}\nu^{n+1}\\
&=-\F(1-y^2)-y(1-y^2)\sum f_i\lll(\frac{\kappa_i}{y}-1\rrr)-y^2\sum F^{ii}y_l\tna_l\thh_{ii}-y\sum\nu^{n+1}_ly_l\sum f_i\\
&+y^2\sum F^{ii}y_l\tna_l\thh_{ii}+y^3\sum F^{ii}\thh^2_{ii}-2y\sum F^{ii}y_i\nu^{n+1}_i+y\sum\nu^{n+1}_ly_l\sum f_i\\
&=-\F(1-y^2)-(1-y^2)\sum f_i(\kappa_i-y)+y\sum f_i(\kappa_i-y)^2-2y\sum f_iy_i\nu^{n+1}_i.
\end{align*}
\end{proof}
From the assumption
\[\frac{\partial\varphi}{\partial t}-y^2F^{ii}\na_{ii}\varphi\geq0\;\mbox{at $(\mathbf{z}_0,t_0),$}\]
we have that
\be\label{C2g30}
\begin{aligned}
0&\leq\dot{h}^1_1-y^2F^{ii}\na_{ii}h^1_1-\frac{h^1_1}{\nu^{n+1}-a}\lll(\dot{\nu}^{n+1}-y^2F^{ii}\na_{ii}\nu^{n+1}\rrr)\\
&=3\F\kappa_1^2+y^2F^{ij,kl}h^j_{i;1}h^l_{k;1}-\F+\kappa_1\sum f_i\kappa_i^2+\kappa_1\sum f_i-F-\kappa_1^2F\\
&-\frac{\kappa_1}{y-a}\left\{-\F(1-y^2)-(1-y^2)F+y(1-y^2)\sum f_i+y\sum f_i\kappa_i^2\right.\\
&\left.-2y^2F+y^3\sum f_i-2y\sum f_iy_i\nu^{n+1}_i\right\}\\
&=\left[3\kappa_1^2-1+\frac{\kappa_1(1-y^2)}{y-a}\right]\F+y^2F^{ij,kl}h^j_{i;1}h^l_{k;1}\\
&-\frac{a\kappa_1}{y-a}\lll(\sum f_i\kappa_i^2+\sum f_i\rrr)+\left[-1-\kappa_1^2+\frac{\kappa_1(1+y^2)}{y-a}\right]F\\
&+\frac{2y\kappa_1}{y-a}\sum f_iy_i\nu^{n+1}_i.
\end{aligned}
\ee
Here we used lemma \ref{C2glm0} and lemma \ref{C2glm1}.
Since
\[F^{ij,kl}h^j_{i;1}h^l_{k;1}\leq\sum_{i\neq j}\frac{f_i-f_j}{\kappa_i-\kappa_j}\lll(h^j_{i;1}\rrr)^2
\leq2\sum_{i\geq 2}\frac{f_i-f_1}{\kappa_i-\kappa_1}\lll(h^1_{i;1}\rrr)^2\]
and
\[h^1_{i;1}=\na_1\lll(g^{1k}h_{ki}\rrr)=y^2h_{1i;1}=y^2h_{11;i}=\frac{\kappa_1}{y-a}\nu^{n+1}_i,\]
we get
\be\label{C2g31}
F^{ij,kl}h^j_{i;1}h^l_{k;1}\leq\frac{2\kappa_1^2}{(y-a)^2}\sum_{i\geq2}\frac{f_i-f_1}{\kappa_i-\kappa_1}\lll(\nu^{n+1}_i\rrr)^2.
\ee

Therefore
\be\label{C2g32}
\begin{aligned}
&0\leq\left[3\kappa_1^2-1+\frac{\kappa_1(1-y^2)}{y-a}\right]\F+\frac{2y^2\kappa_1^2}{(y-a)^2}\sum_{i\geq2}\frac{f_i-f_1}{\kappa_i-\kappa_1}\lll(\nu^{n+1}_i\rrr)^2\\
&-\frac{a\kappa_1}{y-a}\lll(\sum f_i\kappa_i^2+\sum f_i\rrr)+\lll[-1-\kappa_1^2+\frac{\kappa_1(1+y^2)}{y-a}\rrr]F+\frac{2y\kappa_1}{y-a}\sum f_iy_i\nu^{n+1}_i.\\
\end{aligned}
\ee
Let
$$\begin{aligned}
&I=\{i:\kappa_i-y\leq-\theta\kappa_1\},\\
&J=\{i:-\theta\kappa_1<\kappa_i-y<0,\;f_i<\theta^{-1}f_1\},\\
&L=\{i:-\theta\kappa_1<\kappa_i-y<0,\;f_i\geq\theta^{-1}f_1\},
\end{aligned}$$
where $\theta\in(0,1)$ is to be determined later.
Then we have
\be\label{C2g33}
\begin{aligned}
\frac{-1}{y-a}\sum_{i\in I}(\kappa_i-\nu^{n+1})^2f_i&\leq\frac{\theta\kappa_1}{y-a}\sum_{i\in I}f_i(\kappa_i-\nu^{n+1})\\
&\leq\frac{\theta\kappa_1}{y-a}\sum_{i\in I}f_iy_i^2(\kappa_i-\nu^{n+1}),
\end{aligned}
\ee
provided $\theta\kappa_1a>2$ we get
\be\label{C2g34}
\begin{aligned}
-\frac{\kappa_1a}{y-a}\sum_{i\in I}f_i(\kappa_i-\nu^{n+1})^2&\leq+\frac{2\kappa_1}{y-a}\sum_{i\in I}f_iy_i^2(\kappa_i-\nu^{n+1})\\
&=-\frac{2y\kappa_1}{y-a}\sum_{i\in I}f_iy_i\nu^{n+1}_i.
\end{aligned}
\ee
\be\label{C2g35}
\sum_{i\in J}f_iy_i^2(\nu^{n+1}-\kappa_i)\geq-\theta\kappa_1\sum_{i\in J}\theta^{-1}f_1y_i^2\geq f_1\kappa_1,
\ee
provided $a\kappa_1>2,$
\be\label{C2g36}
\frac{-a\kappa_1}{y-a}f_1\kappa_1^2+\frac{2y\kappa_1}{y-a}\sum_{i\in J}f_iy_i\nu^{n+1}_i<0.
\ee
Finally, when $i\in L,$
\be\label{C2g37}
\begin{aligned}
&\frac{2y\kappa_1}{y-a}\sum_{i\in L}f_iy_i\nu^{n+1}_i-\frac{2y^2\kappa_1^2}{(y-a)^2}\sum_{i\in L}\frac{f_i-f_1}{\kappa_1-\kappa_i}\lll(\nu^{n+1}_i\rrr)^2\\
&\leq\frac{-2\kappa_1}{y-a}\left[\sum_{i\in L}f_iy_i^2(\kappa_i-\nu^{n+1})+\frac{1}{y-a}\sum_{i\in L}\frac{1-\theta}{1+\theta}f_iy_i^2(\kappa_i-\nu^{n+1})^2\right]\\
&=\frac{-2\kappa_1}{y-a}\sum_{i\in L}f_iy_i^2\lll[(\kappa_i-y)+\frac{1-\theta}{(y-a)(1+\theta)}(\kappa_i-y)^2\rrr]\\
&\leq\frac{\kappa_1(1+\theta)(1-y^2)}{2(1-\theta)}\sum_{i\in L}f_i.
\end{aligned}
\ee
We want $\frac{\kappa_1(1+\theta)(1-y^2)}{2(1-\theta)}-\frac{a\kappa_1}{y-a}\leq 0,$
which is equivalent to
\be\label{C2g38}
\phi_{\theta}(y)=a-\frac{(1+\theta)(1-y^2)(y-a)}{2(1-\theta)}\geq0\;\;\mbox{on $y\in(a,1]$}.
\ee
Since
\be\label{C2g39}
\begin{aligned}
&\phi_0(y)=a-\frac{1}{2}(1-y^2)(y-a)\\
&>\frac{4}{3}a-\frac{1}{27}a^3-\frac{1}{27}(a^2+3)^{\frac{3}{2}}:=\phi(a).
\end{aligned}
\ee
For $a\in(0,1)$ it is easy to check that $\phi'(a)>0,$ $\phi(0)<0,$ $\phi(1)>0.$ Let $\sigma_0$ be the unique zero of $\phi(a)$ in $(0,1).$
Numerical calculation show that $0.14596<\sigma_0<0.14597.$

\section{Convergence to a stationary solution} \label{Con}
Let us go back to our original equation (\ref{Int1}), which is a scalar parabolic differential equation defined on the cylinder $\Omega_T=\Omega\times[0,T)$ with initial value $u(0)=u_0\in C^{\infty}(\Omega).$ In view of the a priori estimates, which we have estimated in the preceding sections, we know that
\be\label{Con0}
\sqrt{1+|Du|^2}\leq \frac{1}{\sigma}.
\ee
and when $\sigma>\sigma_0$ $(0.14596<\sigma_0<0.14597)$ there is a constant $C$ independent of $\epsilon$ and $t$ such that
\be\label{Con5}
u|D^2u|\leq C.
\ee
Thus we have
\be\label{Con1}
\mbox{$F$ is uniformly elliptic in $u$.}
\ee
Moreover, since $F$ is concave, we have uniform $C^{2+\alpha}(\Omega)$ estimates for $u^2(t),\;\forall t>0.$ Therefore, the flow exists for all $t\geq 0.$

By integrating equation (\ref{Int1}) with respect to $t$, we get
\be\label{Con2}
u(x,t)-u(x,0)=\int_0^t \F uw
\ee
which implies
\be\label{Con3}
\int_0^{\infty}\F uw<\infty\;\;\mbox{$\forall x\in\Omega.$}
\ee
Hence, for any $x\in\Omega$ there is a sequence $t_k\rightarrow\infty$ such that $\F u(x)\rightarrow 0.$

On the other hand, due to our assumptions on our initial surface, $u(x,\cdot)$ is monotone decreasing and therefore
\be\label{Con4}
\lim_{t\rightarrow\infty}u(x,t)=\tilde{u}(x)
\ee
exists, and is of class $C^{\infty}(\Omega).$ So $\tilde{u}(x)$ is a stationary solution of our problem.

\section*{Acknowledgement}
The author would like to thank Professor Joel Spruck for his guidance and support.


\begin{thebibliography}{10}
\bibitem[A94]{A94} B.Andrews, Contraction of convex hypersurfaces in Euclidean space, {\em Calc.Var.PDE \bf{2}}, (1994), 151--171.

\bibitem[C89]{C89} Kung-Ching Chang, Heat flow and boundary value problem for harmonic maps,
{\em Annales de l'I. H. P.,} {section \bf{C}} (1989), Vol 6, 363--395.
\bibitem[CC95]{CC95}L. Caffarelli, and X. Cabré, Fully nonlinear elliptic equations, {\em American Mathematical Society},(1995).
\bibitem[CNS84]{CNS84}L. Caffarelli, L. Nirenberg and J. Spruck, The Dirichlet problem for nonlinear second-order elliptic equations I, {\em Comm. Pure Applied Math.} {\bf 37} (1984), 369--402.
\bibitem[CNS85]{CNS85} L. Caffarelli, L. Nirenberg and J. Sprick, The Dirichlet problem for nonlinear second-order elliptic equations III, {\em Acta Math.} {\bf{155}} (1985), 261--301.
\bibitem[CNS86]{CNS86} L. Caffarelli, L. Nirenberg and J. Sprick,Nonlinear second order elliptic equations IV, {\em Current Topics in P.D.E. Kinokunize Co.}, (1986),1--26.
\bibitem[G96]{G96} C.Gerhardt, Closed Weigngarten hypersurfaces in Riemannian manifolds, {\em J.Differential Geom.\bf{43}}, (1996), 612--641.
\bibitem[G06]{G06} C. Gerhardt, Curvature Problem, {\em Int.Press}, Somerville, MA, 2006.
\bibitem[GS08]{GS08} B. Guan, and J. Spruck, Hypersurfaces of constant curvature in Hyperbolic space II, preprint, arxiv.org/abs/0810.1781, {\em J. Eur. Math. Soc. to appear}.
\bibitem[GS10]{GS10} B.Guan and J.Spruck, Convex hypersurfaces of Constant curvature in hyperbolic space, preprint.
\bibitem[GSZ09]{GSZ09} B. Guan, J. Spruck and M. Szapiel, Hypersurfaces of constant curvature in Hyperbolic space I, {\em J. Geom. Anal.} {\bf{19}} (2009), no. 4, 772--795.
\bibitem[H75]{H75} Richard S. Hamilton, Hamonic maps of manifolds with boundary,
{\em Lecture notes in mathematics} {\bf{471}}, Springer (1975).
\bibitem[ILT96]{ILT96}N. M. Ivochkina, M. Lin and N. S. Trudinger, The Dirichlet problem for the prescribed
curvature quotient equations with general boundary values, {\em Geometric Analysis
and the Calculus of Variations}, International Press, (1996), pp. 125-–141.
\bibitem[L96]{L96} Gray M. Lieberman, Second order parabolic differential equations, {\em World Scientific Pub Co}, (1996).
\bibitem[LM97]{LM97} Rafael Lopez and Sebastian Montiel, Existence of constant mean curvature graph in hyperbolic space,
{\em Calc. Var. Partial Differential Equations} (1999), no. 2, 177–-190.
\bibitem[LX10]{LX10} L. Lin and L. Xiao, Modified mean curvature flow of star-shaped hypersurfaces in hyperbolic space, preprint. 

\bibitem[NS96]{NS96} B. Nelli, and J. Spruck, On the existence and uniqueness of constant mean curvature hypersurfaces in hyperbolic space,
 {\em Geometric analysis and the calculus of variations}, 253–-266, Int. Press, Cambridge, MA, (1996).

\bibitem[LX11]{LX11} L. Xiao, Curvature flow of complete convex hepersurfaces in hyperbolic space, preprint.








\end{thebibliography}
\end{document}